\documentclass[12pt]{amsart}
\usepackage{a4wide}
\usepackage[dvipsnames]{xcolor}
\usepackage{mathrsfs}
\usepackage{amsmath, mathtools}
\usepackage{amsthm}
\usepackage{amsfonts}
\usepackage{amssymb}
\usepackage{graphicx}
\usepackage{palatino}
\usepackage{overpic}
\usepackage{hyperref, comment}
\usepackage[noabbrev,capitalize,nameinlink]{cleveref}
\usepackage{svg}
\usepackage[normalem]{ulem}
\usepackage{array}

\usepackage[scaled=0.95]{inconsolata}       
\linespread{1.09}   

\setcounter{tocdepth}{1}
\setlength{\tabcolsep}{5pt}


\newtheorem{theorem}{Theorem}[section]
\newtheorem{proposition}{Proposition}[section]
\newtheorem{lemma}{Lemma}[section]
\newtheorem{corollary}{Corollary}[section]
\newtheorem{conjecture}{Conjecture}[section]

\theoremstyle{definition}
\newtheorem{definition}{Definition}[section]
\newtheorem{example}{Example}[section]
\newtheorem{question}{Question}[section]

\theoremstyle{remark}
\newtheorem{remark}{Remark}[section]
\numberwithin{equation}{section}

\crefname{theorem}{Theorem}{Theorems}
\crefname{lemma}{Lemma}{Lemmas}
\crefname{proposition}{Proposition}{Propositions}
\crefname{conjecture}{Conjecture}{Conjectures}
\crefname{definition}{Definition}{Definitions}
\crefname{corollary}{Corollary}{Corollaries}
\crefname{remark}{Remark}{Remarks}
\crefname{question}{Question}{Questions}

\Crefname{theorem}{Theorem}{Theorems}
\Crefname{lemma}{Lemma}{Lemmas}
\Crefname{proposition}{Proposition}{Propositions}
\Crefname{conjecture}{Conjecture}{Conjectures}
\Crefname{definition}{Definition}{Definitions}
\Crefname{corollary}{Corollary}{Corollaries}
\Crefname{remark}{Remark}{Remarks}
\Crefname{example}{Example}{Examples}
\Crefname{question}{Question}{Questions}

\newcommand{\R}{\mathbb{R}}
\newcommand{\C}{\mathbb{C}}
\newcommand{\D}{\mathbb{D}}
\newcommand{\N}{\mathbb{N}}

\newcommand{\norm}[1]{\left\lVert#1\right\rVert}
\newcommand{\ve}{\varepsilon}


\definecolor{darkgreen}{RGB}{0,153,0}
\definecolor{darkred}{RGB}{204,0,0}
\definecolor{darkblue}{RGB}{0,51,204}

\hypersetup{colorlinks,linkcolor={darkblue},citecolor={darkgreen},urlcolor={darkgreen}}

\begin{document}

\title{Regular Lagrangians in Lefschetz fibrations}

\author{Joseph Breen}
\address{University of Alabama, Tuscaloosa, AL 35401}
\email{jjbreen@ua.edu} \urladdr{https://sites.google.com/view/joseph-breen}

\author{Agniva Roy}
\address{Boston College, Chestnut Hill, MA 02467}
\email{agniva.roy@bc.edu} \urladdr{https://sites.google.com/bc.edu/agniva-roy}

\author{Luya Wang}
\address{Institute for Advanced Study, Princeton, NJ 08540}
\email{luyawang@ias.edu} \urladdr{https://www.math.ias.edu/~luyawang/}

\thanks{JB was partially supported by NSF Grant DMS-2038103 and an AMS-Simons Travel Grant. AR was partially supported by an AMS-Simons Travel Grant. LW was supported by NSF Grant DMS-2303437, IAS Giorgio and Elena Petronio Fellow II Fund, and Simons Collaboration - New Structures in Low-dimensional topology.}

\begin{abstract}
    We characterize regularity of Lagrangian submanifolds in Weinstein Lefschetz fibrations, establishing a conjecture of Giroux and Pardon. Our main result is the Weinstein analogue of a closed symplectic Lefschetz pencil result of Auroux, Muñoz, and Presas. As an application, given a Legendrian link in tight $S^3$ and an exact filling which is part of an arboreal skeleton for the $4$-ball, we build a Lefschetz fibration such that the image of the filling and all of its mutations are arcs in the base. 
\end{abstract}

\maketitle

\tableofcontents

\section{Introduction}\label{section:introduction}

We begin with context in \cref{subsec:background} before stating our main theorem in \cref{subsec:maintheorem}. Applications to fillings of Legendrian links are stated in \cref{subsec:applications}.

\subsection{Background}\label{subsec:background}

In \cite{EliashbergGanatraLazarev2018Regular}, Eliashberg, Ganatra, and Lazarev defined \textit{regularity} for properly embedded Lagrangian submanifolds in Weinstein cobordisms.  Informally, a Lagrangian is regular if it is compatible with the surrounding Weinstein handle structure. For instance, regular Lagrangians may be built alongside the ambient Weinstein cobordism with \textit{coupled Weinstein handles}, which are $2n$-dimensional $k$-handles $h^{2n}_k$ containing $n$-dimensional Lagrangian $\ell$-handles $h^{n}_{\ell}$ tangent to the Liouville vector field. Precise definitions and characterizations of regularity are reviewed in \cref{subsec:reg_review}. 

Examples of regular Lagrangians include $0$-sections and cotangent fibers of cotangent bundles (more generally, conormal bundles), co-cores of critical Weinstein handles, and smooth strata of Weinstein skeleta. Moreover, in dimension at least $6$, there is a complete existence $h$-principle \cite{Lazarev2020Hprinciples}. Non-regular Lagrangians exist in cobordisms with negative ends \cite{Murphy2013Exact,Lin2016Caps,DimitroglouRizellGolovko2024NonRegular,DimitroglouRizellGolovko2025NonRegular}, but whether or not there are non-regular Lagrangians in a Weinstein domain is a completely open question. 

In this paper we study regularity in the context of Lefschetz fibrations. As the holomorphic analogue of Morse functions, Lefschetz fibrations bridge complex and symplectic geometry \cite{Donaldson1996SymplecticSubmanifolds,Donaldson1999Lefschetz,Seidel2008Picard,MaydanskiySeidel2010Exotic}, are instrumental in filling classifications in low-dimensional contact topology \cite{Wendl2010Strongly,Plamenevskaya2010Planar,Ozbagci2015Topology}, and have generally led to great insight in Weinstein topology  \cite{Mclean2009Lefschetz,GirouxPardon2017Lefschetz,CasalsMurphy2019Affine}.

Briefly, a \textit{Weinstein Lefschetz fibration} on a Weinstein domain $(W, \lambda,\phi)$ is a smooth submersion $p:W^{2n} \to \D^2$ with critical points locally modeled on the complex quadratic map $(z_1, \dots, z_n) \mapsto \sum_{j=1}^n z_j^2$, such that each regular fiber is a ($2n-2$)-dimensional Weinstein domain. A precise definition is given in \cref{section:preliminaries}; see \cref{def:WLF}.

Given a Weinstein Lefschetz fibration, one obtains a more combinatorial description by first choosing a regular basepoint $\bullet \in \D^2$, letting $W_0^{2n-2} := p^{-1}(\bullet)$, and isotoping the critical values $x_1, \dots, x_k$ to be approximately radially distributed around $\bullet$. Then, one identifies a cyclically ordered tuple of Lagrangian ($n-1$)-spheres in $W_0$ by choosing an approximately straight path $\gamma_j$ from $x_j$ to $\bullet$, and considering the boundary of the corresponding \textit{Lefschetz thimble}, the Lagrangian $n$-disk comprising of points above $\gamma_j$ that parallel transport into the critical point associated to $x_j$. This begets the following definition. 

\begin{definition}\label{def:AWLF}
An \emph{abstract Weinstein Lefschetz fibration} is the data 
\[
\left((W_0^{2n-2}, \lambda_0, \phi_0);\, \mathcal{L} = (L_1, \dots, L_N)\right),
\]
abbreviated $(W_0; \mathcal{L})$, where $(W_0^{2n-2}, \lambda_0, \phi_0)$ is a Weinstein domain and $\mathcal{L}$ is an ordered $N$-tuple of exact parametrized Lagrangian ($n-1$)-spheres (possibly duplicated), called the \emph{vanishing cycles}, embedded in $W_0$. The \emph{total space of $(W_0; \mathcal{L})$}, denoted $|W_0; \mathcal{L}|$, is the $2n$-dimensional Weinstein domain obtained by attaching critical handles to $(W_0\times \D^2, \lambda_0 + \lambda_{\mathrm{st}}, \phi_0 + \phi_{\mathrm{st}})$, where $\lambda_{\mathrm{st}}$ and $\phi_{\mathrm{st}}$ comprise the radial Weinstein structure on the disk (c.f. \cite[Definition 6.3]{GirouxPardon2017Lefschetz}), along Legendrian lifts $\Lambda_j\subseteq W_0\times \partial \D^2$, $j=1, \dots, N$, of $L_j$ positioned near $2\pi j/N \in \partial \D^2$.
\end{definition}

The total space $|W_0; \mathcal{L}|$ of an abstract Weinstein Lefschetz fibration naturally admits a Weinstein Lefschetz fibration $p:|W_0; \mathcal{L}| \to \D^2$. In this paper we will pass fluidly between the abstract and non-abstract formulations, and we refer to the work of Giroux and Pardon \cite[\S 6]{GirouxPardon2017Lefschetz} for more details on how to translate between the two. We will often use $\mathcal{W}$ to stand for the full data of the smoothed Weinstein domain $(W^{\mathrm{sm}},\lambda, \phi)$, and will also use $W_0$ when referring to a general regular fiber (not just over a preferred basepoint).

Importantly, every Weinstein domain arises as the total space of a Weinstein Lefschetz fibration. In dimension $4$, this was first shown by Loi and Piergallini \cite{LoiPiergallini2001Stein} and Akbulut and Özbağci \cite{Akbulut2001Lefschetz} --- see also  Plamanevskaya \cite{Plamenevskaya2004Contact} --- while Giroux and Pardon \cite{GirouxPardon2017Lefschetz} generalized the result to all dimensions. 

\begin{theorem}\cite{GirouxPardon2017Lefschetz}\label{thm:GP17}
Let $(W, \lambda, \phi)$ be a Weinstein domain. There is an abstract Weinstein Lefschetz fibration $(W_0; \mathcal{L})$ whose total space $|W_0; \mathcal{L}|$ is $1$-Weinstein homotopic to $(W,\lambda, \phi)$.   
\end{theorem}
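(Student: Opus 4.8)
The plan is to realize $(W,\lambda,\phi)$ as a total space by building the Lefschetz data inductively from a Weinstein handle decomposition, so that attaching each handle of $W$ corresponds to a controlled modification of the fibration. Fix a Weinstein Morse function on $W$ whose critical points have index at most $n$, and order the handles by nondecreasing index. The base case is a Weinstein ball, which is the total space of the trivial fibration over $\D^2$ with fiber $\D^{2n-2}$ and empty tuple of vanishing cycles. Throughout, I will maintain the invariant that the portion of $W$ built so far is $1$-Weinstein homotopic to a total space $|W_0;\mathcal{L}|$ whose associated Lefschetz fibration restricts to a supporting open book on the contact boundary.

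For a subcritical handle (index $k<n$), the idea is to absorb it into the fiber. Since $|W_0;\mathcal{L}|$ is $W_0\times\D^2$ with critical handles attached, its topology and Weinstein structure below index $n$ are dictated by $W_0$; using Cieliebak's splitting theorem for subcritical domains, one sees that a subcritical handle attachment to the total space can be matched, up to Weinstein homotopy, by the corresponding subcritical handle attachment to the fiber $W_0$. Thus subcritical handles of $W$ are recorded by enlarging $W_0$ and leaving $\mathcal{L}$ unchanged.

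For a critical handle (index $n$), attached along a Legendrian sphere $\Lambda$ in the boundary of the current piece, the goal is to exhibit $\Lambda$ as the Legendrian lift of an exact Lagrangian $(n-1)$-sphere in a regular fiber, so that appending that sphere to $\mathcal{L}$ attaches a critical handle along precisely $\Lambda$. Because the fibration induces a supporting open book on the boundary, I would first Legendrian-isotope $\Lambda$ into a page, the analogue of pushing a Legendrian into a page, valid after sufficiently stabilizing the open book. Each positive stabilization of the boundary open book is realized in the interior by a Lefschetz stabilization of the fibration, namely attaching a subcritical handle to the fiber together with a canceling vanishing cycle, which alters $|W_0;\mathcal{L}|$ only by a $1$-Weinstein homotopy. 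Once $\Lambda$ lies in a page it is the lift of an exact Lagrangian sphere $L$, and adding $L$ to $\mathcal{L}$ reproduces the handle.

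The main obstacle is precisely this critical-handle step: there is no reason the attaching Legendrian of a given handle lies in, or is even isotopic to something lying in, a single fiber, and forcing it into a page in all dimensions requires the higher-dimensional open-book technology together with careful control of the Weinstein structures. One must perform the page-stabilizations and the Legendrian isotopy simultaneously while keeping every intermediate structure Weinstein and without disturbing the Lefschetz condition at the critical values already placed; quantifying how much stabilization is needed, and verifying that the resulting vanishing cycle is genuinely an exact parametrized Lagrangian sphere, is the crux. Iterating the construction over all critical handles then produces the tuple $\mathcal{L}=(L_1,\dots,L_N)$ and the desired $1$-Weinstein homotopy $|W_0;\mathcal{L}|\simeq(W,\lambda,\phi)$.
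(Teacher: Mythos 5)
Your proposal does not follow the proof that the cited theorem actually has: Giroux and Pardon prove \cref{thm:GP17} with Donaldson-style approximately holomorphic techniques, whereas you argue Morse-theoretically, handle by handle. Your route is instead the one taken in \cite{BreenHondaHuang2023Giroux}, and it coincides with the machinery this paper itself develops (the subcritical splitting $h_k^{2n}\cong h_k^{2n-2}\times h_0^2$ and the critical-handle step are exactly Cases 1 and 2 of \cref{lemma:trivial_handle}). That is a legitimate and, in this context, arguably preferable route; the comparison is precisely the content of \cref{fig:context}.

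However, as written your argument has a genuine gap, and you have correctly located it yourself: the critical-handle step. Saying that pushing $\Lambda$ into a page is ``valid after sufficiently stabilizing the open book'' and that quantifying this ``is the crux'' is not a proof of that step; the two inputs that close it are specific, nontrivial theorems. First, one needs the existence of a strongly Weinstein supporting open book of the boundary contact manifold in which the given Legendrian sphere $\Lambda$ sits in a single page as a regular Lagrangian (\cref{thm:obd_exist_leg_page}, due to Honda--Huang via convex hypersurface theory). This open book has no reason to be the one induced by the partial Lefschetz fibration you have already built, so second, one needs the higher-dimensional Giroux correspondence (\cref{thm:GC}): the two open books admit a common positive multistabilization, and each stabilization on the fibration side is realized by a Lefschetz stabilization (a subcritical fiber handle plus a canceling vanishing cycle), which changes the total space only by a $1$-Weinstein homotopy. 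After stabilizing, $\Lambda$ lies in the new page as an exact (indeed regular) Lagrangian sphere and can be appended to $\mathcal{L}$. Neither ingredient can be replaced by ``careful control of the Weinstein structures''; without invoking them the induction does not close. With those two citations inserted, your outline becomes a correct proof along the lines of \cite{BreenHondaHuang2023Giroux}.
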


Here we use the language \textit{$1$-Weinstein homotopy} to emphasize that we allow finite instances of singularities that occur in generic $1$-parametric families, i.e. birth-death type singularities. By \textit{$0$-Weinstein homotopy} (or \textit{$0$-homotopy}, if simply discussing smooth Morse functions) we mean a homotopy through non-degenerate structures (or functions).  

\cref{thm:GP17} is the Weinstein counterpart to Donaldson's theorem \cite{Donaldson1999Lefschetz} on existence of symplectic Lefschetz pencil structures for closed integral symplectic manifolds. Both \cite{Donaldson1999Lefschetz} and Giroux and Pardon's \cref{thm:GP17} were proven with Donaldson's approximately holomorphic technology \cite{Donaldson1996SymplecticSubmanifolds,Auroux1997Holomorphic}. More recently, a new proof of \cref{thm:GP17} based on convex hypersurface theory was given in \cite{BreenHondaHuang2023Giroux}. The spirit of the present article is in line with the latter approach.

\subsection{Main results}\label{subsec:maintheorem}

We use advances in convex hypersurface theory \cite{HondaHuang2019Convex,BreenHondaHuang2023Giroux} together with techniques of Johns \cite{Johns2011Lefschetz} and S. Lee \cite{Lee2021Lefschetz} to characterize regularity of Lagrangians in terms of Weinstein Lefschetz fibrations. We establish a conjecture of Giroux and Pardon, first recorded (to the best of the authors' knowledge) by Eliashberg, Ganatra, and Lazarev \cite[\S 2]{EliashbergGanatraLazarev2018Regular}.

\begin{conjecture}[Giroux-Pardon]\label{conj:gp_egl}
If $L\subset W$ is a regular Lagrangian submanifold of a Weinstein domain with $\partial L \neq \emptyset$, there is a Weinstein Lefschetz fibration $p:W \to \D^2$ such that $p(L)$ is an arc with one endpoint on a critical value of $p$ and the other endpoint on $\partial \D^2$.   
\end{conjecture}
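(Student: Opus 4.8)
The plan is to realize $L$ as a Lefschetz thimble by constructing a Weinstein Lefschetz fibration on $W$ adapted to $L$, proceeding from a local model near $L$ to a global extension. First I would use regularity: by the characterizations reviewed in \cref{subsec:reg_review}, a regular $L$ admits a compatible Weinstein handle decomposition of $W$ in which a standard Weinstein neighborhood $\nu(L) \cong (T^*L, \lambda_{\mathrm{can}})$ is attached so that $L$ is the zero section. Fixing a Morse function on $L$ with outward-pointing gradient along $\partial L \neq \emptyset$ endows $L$ with a handle decomposition; its critical points, ordered by index and value, are the combinatorial data I will feed into the fibration.

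Next, on the local model $T^*L$ I would run the Johns--Lee ``complexified Morse function'' construction. This produces a Weinstein Lefschetz fibration $\pi_0 : T^*L \to \D^2$ whose critical points are exactly the critical points of the chosen Morse function on $L$, whose vanishing cycles are the spheres spanned by the descending cotangent directions, and---crucially---in which the zero section $L$ projects to a segment of the real axis meeting these critical values. I would then normalize so that one end of this segment terminates at the extremal critical value while the other end, carrying the slice corresponding to $\partial L$, is pushed out to a point of $\partial \D^2$. At this stage $\pi_0(L)$ is the desired arc inside $\nu(L)$.

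The global step is to extend $\pi_0$ across $W \setminus \nu(L)$ to a Weinstein Lefschetz fibration $p : W \to \D^2$ restricting to $\pi_0$ near $L$. Here I would invoke convex hypersurface theory in the form developed by Honda--Huang and used by Breen--Honda--Huang in their proof of \cref{thm:GP17}. The idea is to choose a convex hypersurface $\Sigma \subset W$ adapted to the Liouville flow, whose dividing data recovers the ambient fiber $W_0$ and which meets $L$ in the Legendrian $\partial(L \cap W_0)$ compatibly with the local fibration. A relative version of the Giroux--Pardon construction then builds the fibration on the complement while matching the fiber and monodromy data of $\pi_0$ along $\partial \nu(L)$, and without introducing critical values that force $L$ off its arc. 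Reading off $p(L)$ and, if necessary, performing Hurwitz-type handle slides to put the endpoints into the stated position (one critical value, one boundary point) completes the argument.

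I expect the main obstacle to be this global extension: producing a single convex hypersurface adapted simultaneously to the ambient Weinstein structure and to $L$---in effect a relative Giroux correspondence for the pair $(W, L)$---and ensuring the resulting extension is Weinstein rather than merely smooth. Controlling the Legendrian $\partial L$ within the dividing set, so that it glues to the thimble boundary coming from $\pi_0$ without creating superfluous vanishing cycles along the arc, is the crux of the matter, and it is precisely where the regularity hypothesis on $L$ must be used.
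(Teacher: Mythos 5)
Your overall strategy---build a Johns--Lee type Lefschetz fibration on the cotangent neighborhood $\D^*L$ so that the zero section projects to an arc, then extend over the complementary cobordism using convex hypersurface theory---is exactly the strategy of the paper (\cref{prop:regular_coupled_lefschetz_handlebody}, \cref{lemma:0handles}, \cref{lemma:arc_image_lemma}, \cref{lemma:k_handle} for the local model; \cref{lemma:trivial_handle} for the extension). The local step is essentially right, though the paper packages it differently: rather than invoking a complexified Morse function wholesale, it homotopes each coupled Weinstein $\ell$-handle into a triple $(h_\ell^{2n},\emptyset)\cup(h_{n-1}^{2n},\emptyset)\cup(h_n^{2n},h_\ell^n)$ with a canceling pair (\cref{lemma:coupled_weinstein_lefschetz_handle_htpy}), so that the Lagrangian $\ell$-handle always sits inside a critical Weinstein handle and its arc image follows from the explicit computation in \cref{lemma:arc_image_lemma}.

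The genuine gap is in your global step. You propose to extend across $W\setminus\nu(L)$ by ``a relative version of the Giroux--Pardon construction'' built from a single convex hypersurface adapted simultaneously to $W$ and $L$ --- in effect a relative Giroux correspondence for the pair $(W,L)$. No such relative correspondence is established or needed, and you correctly flag it as the crux without resolving it. The paper sidesteps it entirely by attaching the complementary cobordism $W_L$ one Weinstein handle at a time. Subcritical handles split as $h_k^{2n-2}\times h_0^2$ and attach to the fiber, leaving $L$ untouched. For each critical handle with attaching Legendrian $\Lambda$, one applies \cref{thm:obd_exist_leg_page} to get an open book of $\partial W$ carrying $\Lambda$ on a page, then the \emph{absolute} Giroux correspondence (\cref{thm:GC}) to find a common multistabilization with the open book induced by the existing fibration; the stabilizations are realized as Lefschetz stabilizations whose new vanishing cycles can be perturbed off $\partial L$ (which lies on a page), so admissibility is preserved, and $\Lambda$ is then added as one more vanishing cycle. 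Without some such mechanism --- either this handle-by-handle reduction or an actual proof of the relative statement you posit --- your argument does not close.
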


\noindent Our (more general) precise statement is recorded below as \cref{thm:main}.

The study of Lagrangian submanifolds via their projections to curves in Lefschetz fibrations --- in particular, the concept of a \textit{matching cycle}, a Lagrangian sphere projecting to an arc connecting two critical values --- was first suggested by Donaldson (see \cite{AurouxMunozPresas2005Pencils,Seidel2015Quartic}), and has been fruitfully exploited in symplectic topology \cite{Seidel2008Picard,MaydanskiySeidel2010Exotic,CasalsMurphy2019Affine,Thompson2025Hopf}.

\begin{definition}\label{def:efficient}
Let $(L; \partial_- L, \partial_+ L)$ be a cobordism. We say that a smooth Morse function $f: L \to \R$ is \emph{efficient} if $f$ has no critical points of index $0$ when $\partial_- L \neq \emptyset$ and exactly one critical point of index $0$ when $\partial_- L = \emptyset$, and likewise $f$ has no critical points of top index when $\partial_+ L \neq \emptyset$ and exactly one critical point of top index when $\partial_+ L = \emptyset$.
\end{definition}

\begin{theorem}\label{thm:main}
Let $(W^{2n}, \lambda, \phi)$ be a Weinstein domain and $(L; \emptyset, \partial L)\subseteq W$ a properly embedded regular Lagrangian submanifold. Let $f: L \to [0,1]$ be an efficient Morse function. Then, there is a Weinstein Lefschetz fibration $p:W \to \D^2$ such that: 
\begin{enumerate}
    \item The total space of $p:W \to \D^2$ is $1$-Weinstein homotopic to $(W, \lambda, \phi)$. 
    \item The image $p(L) \subseteq \D^2$ is a smoothly embedded parametrized arc $\gamma:[0,1]\to p(L)$. 
    \item The function $\gamma^{-1}\circ p\mid_L:L \to [0,1]$ is a Morse function $0$-homotopic to $f:L \to [0,1]$. 
\end{enumerate}
\end{theorem}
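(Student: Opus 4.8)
The plan is to reduce the global problem to a local model near $L$ and then graft the ambient Weinstein topology onto it. Since $L$ is regular, by definition (see \cref{subsec:reg_review}) the pair $(W,L)$ admits a coupled Weinstein handle decomposition: a Weinstein handle structure on $W$ together with Lagrangian handles building $L$, each index-$\ell$ Lagrangian handle carried inside an index-$\ell$ ambient handle and tangent to the Liouville field. The efficient Morse function $f$ of \cref{def:efficient} organizes $L$ into exactly these Lagrangian handles, and its efficiency --- a single index-$0$ critical point and no top-index critical point, since $\partial_- L = \emptyset$ and $\partial_+ L = \partial L$ --- is precisely what will force the projection arc to terminate at a critical value on one end and on $\partial \D^2$ on the other. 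The target is therefore a Weinstein Lefschetz fibration whose fibers meet $L$ in the regular level sets of $f$.

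First I would build a local model on a Weinstein neighborhood $T^*L$ of $L$. Following Johns \cite{Johns2011Lefschetz} and S. Lee \cite{Lee2021Lefschetz}, a Morse function on the zero section determines a Lefschetz fibration on $T^*L$ in which the zero section projects to an embedded arc recovering the Morse function. I would adapt their construction to the efficient $f$: the single minimum becomes the tip of a Lefschetz thimble sitting over a critical value (one endpoint of $\gamma$), and because there is no maximum, $L$ extends out to the boundary fiber, placing $\partial L$ over $\partial \D^2$ (the other endpoint of $\gamma$). By construction the fiber slices $L\cap p^{-1}(\gamma(t))$ are the level sets of $f$, so $\gamma^{-1}\circ p|_L$ agrees with $f$ up to reparametrization, which is conclusion (3).

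Next I would extend this local fibration to all of $W$. Here I would invoke the convex-hypersurface-theoretic construction underlying \cref{thm:GP17} (as in \cite{HondaHuang2019Convex, BreenHondaHuang2023Giroux}), run relative to the fibration already fixed near $L$. The coupled decomposition separates the ambient handles into a subcritical part, absorbed into the fiber $W_0$, and critical handles, which become additional vanishing cycles positioned away from the arc $\gamma$. The flexibility of convex hypersurface theory to slide critical values and transport paths is what allows the extra Weinstein topology to be grafted on without disturbing the projection of $L$, at the cost of only a $1$-Weinstein homotopy. This yields conclusions (1) and (2): the total space is $1$-Weinstein homotopic to $(W, \lambda, \phi)$ and $p(L) = \gamma$ is a smoothly embedded arc.

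The main obstacle is this relative globalization step: one must perform the construction compatibly with the pre-fixed thimble-and-handle picture of $L$, guaranteeing that the fiberwise Lagrangian slices coming from the different handles glue into a single smooth embedded arc in the base and that the parallel transport maps genuinely preserve $L$ rather than merely its homotopy class. Controlling the interaction between the ambient vanishing cycles and the Lagrangian slices of $L$ --- so that the former can be placed disjointly from $\gamma$ while the latter remain the level sets of a Morse function $0$-homotopic to $f$ --- is where the bulk of the technical work will lie.
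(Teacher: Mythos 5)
Your overall strategy matches the paper's: build the fibration first on a Weinstein neighborhood $\D^*L$ following Johns and Lee, with the unique index-$0$ critical point of $f$ supplying the critical-value endpoint of the arc and the absence of top-index critical points sending $\partial L$ out to $\partial \D^2$, and then extend over the complementary cobordism $W_L$, splitting subcritical ambient handles into the fiber and adding critical ones as vanishing cycles away from $p(L)$. This is exactly how the paper proceeds, organized as an induction over coupled Lefschetz handles (\cref{prop:regular_coupled_lefschetz_handlebody}, \cref{lemma:0handles}, \cref{lemma:k_handle}, \cref{lemma:trivial_handle}), where the induction carries a stronger invariant than conclusions (2)--(3): every regular fiber slice $L\cap p^{-1}(z)$ must be a \emph{regular} Lagrangian in the fiber, which is what lets the next coupled handle be attached Lefschetz-compatibly via the conormal-bundle argument of \cref{lemma:arc_image_lemma}.

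The genuine gap is in your extension step, precisely where you flag ``the bulk of the technical work.'' The mechanism you propose --- invoking the convex-hypersurface construction behind \cref{thm:GP17} ``relative to the fibration already fixed near $L$,'' plus flexibility to ``slide critical values and transport paths'' so new vanishing cycles avoid $\gamma$ --- does not address the actual difficulty: the Legendrian attaching sphere $\Lambda$ of a critical handle of $W_L$ is an arbitrary Legendrian in the boundary of what has been built so far, and it need not lie on (or near) any page of the open book induced by your fibration. No isotopy of critical values or transport paths makes such a $\Lambda$ into a vanishing cycle, because the regular fiber itself must be changed; and there is no off-the-shelf ``relative'' version of the construction of \cite{BreenHondaHuang2023Giroux} that one can simply invoke. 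The paper's resolution (\cref{lemma:trivial_handle}, Case 2) is to use \cref{thm:obd_exist_leg_page} to produce \emph{some} strongly Weinstein open book with $\Lambda$ regular on a page, then apply the Giroux correspondence (\cref{thm:GC}) to find a common positive multistabilization with the open book induced by the existing fibration; the stabilizations on the fibration side are Lefschetz stabilizations, which preserve admissibility because $\partial L$ lies in a page and the new vanishing cycles can be generically perturbed off of it. Only after this stabilization does $\Lambda$ sit in the (new) regular fiber as a regular Lagrangian, so that the handle can be attached as an additional vanishing cycle. Without this multistabilization argument, or a substitute for it, your globalization step does not go through.
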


\noindent When the conclusion of \cref{thm:main} holds, we will say that $p$ is an {\em admissible Lefschetz fibration} for the tuple $(W,L,f)$, or for $(W,L)$ if the precise function is not important, and that $L$ is an {\em arc-admissible Lagrangian} for $p$.

By taking $L=S^n$ and $f:S^n \to [0,1]$ to be the standard Morse function with two critical points, \cref{thm:main} says that every regular sphere in a Weinstein domain can be witnessed as a matching cycle of a Lefschetz fibration. More generally, \cref{thm:main} is the Weinstein analogue of a similar asymptotic statement for Lagrangian submanifolds in Lefschetz pencils of closed symplectic manifolds due to Auroux, Muñoz, and Presas \cite[Theorem 1.3]{AurouxMunozPresas2005Pencils}. This latter result, also obtained via Donaldson's approximately holomorphic technology, is a version of Donaldson's Lefschetz pencil existence result relative to a prescribed Lagrangian submanifold. Accordingly, \cref{thm:main} is a version of \cref{thm:GP17} relative to a prescribed regular Lagrangian submanifold. 

Giroux and Pardon conjectured that the approximately holomorphic techniques used in their proof of \cref{thm:GP17} could be applied to obtain this corresponding relative statement, in analogy with \cite[Theorem 1.3]{AurouxMunozPresas2005Pencils}; our methods, by contrast, are directly Morse theoretic, instead using the technology of convex hypersurface theory in contact topology. This context is summarized by the tables in \cref{fig:context}.

\begin{figure}[ht]
\centering
\begin{tabular}{|>{\centering\arraybackslash}m{2.7cm}|>{\centering\arraybackslash}m{5.5cm}|>{\centering\arraybackslash}m{6cm}|}
\hline
\multicolumn{3}{|c|}{\textbf{Closed integral symplectic manifolds $(M,\omega)$}} \\
\hline
& Lefschetz pencil existence & Relative (to a Lagrangian) Lefschetz pencil existence \\
\hline
Approx. holo. & \cite{Donaldson1999Lefschetz} & \cite{AurouxMunozPresas2005Pencils} \\
\hline
Morse theory & \textcolor{darkred}{not done yet} & \textcolor{darkred}{not done yet} \\
\hline
\end{tabular}

\vspace{0.5cm}

\begin{tabular}{|>{\centering\arraybackslash}m{2.7cm}|>{\centering\arraybackslash}m{5.5cm}|>{\centering\arraybackslash}m{6cm}|}
\hline
\multicolumn{3}{|c|}{\textbf{Weinstein domains $(W,\lambda, \phi)$}} \\
\hline
& Lefschetz fibration existence & Relative (to a Lagrangian) Lefschetz fibration existence \\
\hline
Approx. holo. & \cite{GirouxPardon2017Lefschetz} & \textcolor{darkred}{not done yet;} proposed to solve \cref{conj:gp_egl}\\
\hline
Morse theory & \cite{BreenHondaHuang2023Giroux} & \cref{thm:main} (\cref{conj:gp_egl}) \\
\hline
\end{tabular}
\caption{Comparison of results and techniques for closed symplectic manifolds and Weinstein domains.}
\label{fig:context}
\end{figure}

The core of the proof of \cref{thm:main} is the case of the $0$-section of a cotangent bundle. This is where we use and further develop the approach of S. Lee \cite{Lee2021Lefschetz}, generalizing low-dimensional work of Johns \cite{Johns2011Lefschetz}, on constructing Lefschetz fibrations from Weinstein handle decompositions. 

\begin{remark}
As we prepared to post our work publicly, Giroux uploaded the preprint \cite{giroux2025morsefunctionslefschetzfibrations} which considers explicit constructions of Lefschetz fibrations on cotangent bundles relative to a chosen Morse function on the $0$-section. This recovers the case of the $0$-section in the cotangent bundle of \cref{thm:main}, reformulating and refining work of Johns and Lee with more direct techniques.    
\end{remark}

\subsection{Applications}\label{subsec:applications}
We apply the above results to visualize fillings of Legendrian links $\Lambda \subset (S^3, \xi_{\mathrm{st}})$ in the unique (up to deformation equivalence) Weinstein filling $(B^4, \lambda_{\mathrm{st}}, \phi_{\mathrm{st}})$. A properly embedded Lagrangian $L \subset B^4$ is called an {\em exact filling} of $\Lambda$ if $\lambda_{\mathrm{st}}|_L = 0$ and $\partial L = \Lambda$. We will consider exact fillings $L$ of $\Lambda$ with an associated set $\mathcal{D}$ of Lagrangian disks in the interior of $B^4$, with boundary on $L$, such that $L \cup \mathcal{D}$ forms a {\em closed arboreal Lagrangian} skeleton (abbreviated CAL-skeleton) for the Weinstein pair $(B^4, \Lambda)$. We define these notions in \cref{sec:mutation}; for now, we note that regular Lagrangians are exact, and smooth loci of CAL-skeleta are regular. Given this data, one can perform {\em Lagrangian mutation} of $L$ along one of the disks to produce a new exact filling.

Using the argument for \cref{thm:main}, we build a Lefschetz fibration $p_L$ on $(B^4, \lambda_{\mathrm{st}}, \phi_{\mathrm{st}})$ such that $p_L$ is admissible for  $(B^4, L\cup \mathcal{D})$, and then show the following.

\begin{theorem}\label{thm: mutation}
    Let $\Lambda, L, \mathcal{D},$ and $p_L: (B^4, \lambda_{\mathrm{st}}, \phi_{\mathrm{st}})\to \D^2$ be as above. All fillings of $\Lambda$ obtained by Lagrangian mutations of $L$ along disks in $\mathcal{D}$ are arc-admissible for $p_L$.
\end{theorem}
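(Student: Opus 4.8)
The plan is to fix the admissible Lefschetz fibration $p_L$ produced by the proof of \cref{thm:main} and to realize every Lagrangian mutation as an explicit, fibration-preserving modification of arcs in the base. First I would record the arc picture coming out of the construction: since $p_L$ is admissible for $(B^4, L\cup\mathcal{D})$, the filling $L$ projects to a smoothly embedded arc $\gamma_L$ running from a critical value to a point $b\in\partial\D^2$, and each disk $D_i\in\mathcal{D}$ projects to an arc $\gamma_{D_i}$ from a critical value $c_i$ to an interior point $q_i\in\gamma_L$. The crucial structural feature is that $D_i$ is a Lagrangian thimble whose boundary sphere $\partial D_i\subseteq L$ is precisely the vanishing cycle $V_i$ of $c_i$, sitting inside the fiber over $q_i$; thus the combinatorial data of $L\cup\mathcal{D}$ is a tree of arcs in $\D^2$ with $\gamma_L$ as a distinguished branch.

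Next I would identify the Lagrangian mutation $\mu_{D_i}(L)$ with a monodromy operation internal to $p_L$. The mutation is supported in a neighborhood of $D_i$ and alters $L$ by a cut-and-reglue along $V_i=\partial D_i$ using the disk $D_i$. Because $V_i$ lives in the single fiber over $q_i$ and is the vanishing cycle at $c_i$, I would show that this surgery is realized, up to Hamiltonian isotopy, by transporting the portion of $L$ lying over the sub-arc of $\gamma_L$ beyond $q_i$ once around the critical value $c_i$, i.e.\ by applying the local Picard--Lefschetz monodromy (the Dehn twist $\tau_{V_i}$) to that piece. In the base this is nothing but a drag of the sub-arc of $\gamma_L$ from $q_i$ to $b$ around $c_i$, so that the projected image of $\mu_{D_i}(L)$ is the arc obtained from $\gamma_L$ by this drag.

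With this dictionary in place, arc-admissibility of the output is immediate: dragging a sub-arc of $\gamma_L$ around the critical value $c_i$ produces another smoothly embedded arc with the same endpoints, one on a critical value and the other on $\partial\D^2$, and the induced projection realizes an efficient Morse function on $\mu_{D_i}(L)$ as in the conclusion of \cref{thm:main}. Hence $\mu_{D_i}(L)$ is again arc-admissible for the same $p_L$. To handle arbitrary, and in particular iterated, mutations along disks of $\mathcal{D}$, I would then apply a Hurwitz move to return the arc diagram to the normalized form of the first step, so that the remaining disks are again presented as thimbles over arcs meeting $\gamma_{\mu_{D_i}(L)}$; an induction on the number of mutations then closes the argument.

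The main obstacle is the middle step: precisely matching the geometric mutation surgery along $\partial D_i$ with the monodromy drag in $p_L$, and in particular controlling embeddedness. One must verify that the drag can be performed while keeping the projected arc embedded and disjoint from the other critical values, that the Lagrangian lift over the dragged arc is genuinely the mutated filling $\mu_{D_i}(L)$ (and not merely Lagrangian-isotopic to something projecting to an arc), and that the regularity and CAL-structure are preserved so that the induction can proceed. Establishing this identification carefully --- essentially a relative, fiberwise version of the parallel-transport description of matching cycles --- is where the real content lies.
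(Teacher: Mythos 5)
There is a genuine gap at the central step of your proposal: the identification of Lagrangian mutation with the Picard--Lefschetz monodromy drag is incorrect. If you drag the sub-arc of $\gamma_L$ beyond $q_i$ around the critical value $c_i$ through a family of arcs that avoids all critical values, then the corresponding family of parallel-transport lifts is an isotopy through embedded exact Lagrangians, hence a Hamiltonian isotopy; the resulting filling is therefore Hamiltonian isotopic to $L$. But $\mu_{D_i}(L)$ is in general \emph{not} Hamiltonian isotopic to $L$ --- that is the entire point of mutation producing new fillings (for the trefoil, iterated mutations yield five pairwise non-Hamiltonian-isotopic fillings, distinguished by augmentations, cf.\ \cref{example:trefoil}). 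Relatedly, applying the monodromy $\tau_{V_i}$ to the piece of $L$ beyond $q_i$ does nothing in the local model: near $q_i$ the filling is the trace of $V_i$ under parallel transport, and $\tau_{V_i}$ preserves $V_i$ setwise, so cutting and regluing by $\tau_{V_i}$ returns $L$ up to Hamiltonian isotopy. The correct Lefschetz-theoretic description of mutation, which is what the paper uses (\cref{prop: once_mutated_filling}, following \cite[Figure 2]{pascaleff-tonkonog}), is to push the arc $p_L(L)$ \emph{through} the critical value $q_i$: at the moment of crossing, $\partial D_i$ collapses and the lift is an immersed Lagrangian, and on the other side the lift is the embedded mutated filling. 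This is a wall-crossing, not a monodromy loop, and it changes the image arc (see \cref{fig: mutation}); your drag leaves the arc, and the Hamiltonian class of its lift, unchanged.

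A second, smaller omission: before the wall-crossing move can be applied, one must arrange the mutation configuration so that $\partial D_i$ actually sits inside a fiber of $p_L$. This is automatic for the first mutation (by admissibility of $p_L$ for $L\cup\mathcal{D}$), but after iterated mutations the curves of the mutated $\mathbb{L}$-compressing system need not be compatible with the handle decomposition induced by $p_L|_{L_1}$ --- they can fail to be isotopic into the $1$-skeleton, and mutated systems can even contain immersed curves. The paper handles this with the degenerate standard Weinstein Lefschetz picture (\cref{def: standard_WLp}) and a handleslide argument showing that, after a homotopy of $p_L$, the boundary of the next compressing disk can be embedded in the singular fiber $p_L^{-1}(x_1)\cap L_1$; this is what makes the local model of \cite{pascaleff-tonkonog} applicable at every stage of the induction. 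Your proposed induction via Hurwitz moves does not address this compatibility issue, and it is precisely where the inductive step can fail without further argument.
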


It is natural to ask which exact fillable Legendrian links admit such fillings with associated CAL-skeleta, or more generally which exact fillable links admit regular fillings (as asked in \cite{Breen2024Regularly}). The question of whether an exact filling of a Legendrian link is part of a CAL-skeleton also has consequences to associated cluster structures. In this case, work of Casals and Li \cite{casals-li24} ensures that microlocal merodromies along positive relative cycles on the filling extend to global regular functions on the sheaf moduli of the boundary Legendrian. In particular, this allows for the symplectic construction of cluster structures on these sheaf moduli (i.e., the main results of \cite{CasalsWeng} and \cite{CGGLSS}) without relying on combinatorics. In analogy to Lazarev's $h$-principle result in higher dimensions \cite{Lazarev2020Hprinciples}, we formulate the following question.
\begin{question}
    Is every exact Lagrangian filling of a Legendrian link in tight $S^3$ formally Lagrangian isotopic to a regular filling, of a formally Legendrian isotopic link, with an associated CAL-skeleton?
\end{question}
\noindent Note that without the assumption of exactness, this question admits a negative answer --- see the discussion after \cite[Theorem 1.1]{Lazarev2020Hprinciples}.

A weaker question is the following: 

\begin{question}\label{q:new_q}
Given any exact fillable Legendrian link $\Lambda$, does there exists a Legendrian link $\Lambda'$, smoothly isotopic to $\Lambda$, such that $\Lambda'$ has a regular filling $L$ with an associated CAL-skeleton $L\cup\mathcal{D}$ for $(B^4, \Lambda')$?
\end{question}

Work of Casals \cite[Theorem 1.1]{casals_skeleta} establishes this affirmatively for (the unique) max-tb Legendrian representatives of links of single algebraic singularities; such links are iterated cables of the unknot. Combining work of Boileau and Orevkov \cite{boileau-orevkov} and Rudolph \cite{rudolph2004algebraic}, every exact fillable topological link type is algebraic, in that it bounds a smooth algebraic curve in $\C^2$. Thus, one approach to \cref{q:new_q} is to extend Casals' work on links of single singularities to the more general class of algebraic links as described in Rudolph \cite{rudolph2004algebraic}.

We also give an independent construction of CAL-skeleta associated to decomposable fillings of certain links by adapting an argument of \cite{ConwayEtnyreTosun2021Disks}: 

\begin{proposition}\label{prop:decomposable-skel}
     If $\Lambda_\beta$ is the rainbow closure of a positive braid $\beta$ (see \cref{fig:rainbow}), the Weinstein pair $(B^4, \Lambda_\beta)$, admits CAL-skeleta associated to decomposable fillings of $\Lambda_\beta$.
\end{proposition}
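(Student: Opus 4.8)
The plan is to start from the canonical decomposable filling determined by the braid word and to build the disk collection $\mathcal{D}$ one saddle at a time, certifying afterward that the union is arboreal and that the Liouville flow retracts onto it. Write $\beta = \sigma_{i_1}\cdots\sigma_{i_m}$ for a positive braid on $n$ strands. Reading the word and performing a pinch (Lagrangian saddle) move at each generator, then capping the resulting max-$\mathrm{tb}$ unknots with minimum disks, produces a decomposable filling $L$ of $\Lambda_\beta$ with $\chi(L) = n - m$, hence $b_1(L) = m - n + 1$. I would first fix this $L$ together with its induced handle structure: the $n$ minima are index-$0$ Lagrangian handles and the $m$ saddles are index-$1$ Lagrangian handles, of which $n - 1$ serve only to connect the minimum disks into a connected surface while the remaining $b_1(L)$ each create a new $1$-cycle.

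Next I would produce the disks, adapting the argument of \cite{ConwayEtnyreTosun2021Disks}. Each cycle-creating saddle determines an embedded $1$-cycle $c_j$ on $L$ (run across the new band and back through the surface), and the content of the CET-type construction is that $c_j$ bounds an embedded Lagrangian disk $D_j$ in the interior of $B^4$ with $\partial D_j = c_j \subseteq L$. Working in the Weinstein handle decomposition of the pair $(B^4, \Lambda_\beta)$, each $D_j$ can be localized near its saddle and taken with interior disjoint from $L$ and from the other disks, while the cycles $c_j$ form a basis of $H_1(L; \Z)$ whose mutual intersections are governed by the braid combinatorics. Setting $\mathcal{D} = \{D_1, \dots, D_{b_1(L)}\}$ gives the candidate collection, with $\#\mathcal{D} = b_1(L)$, matching the associated cluster combinatorics --- for instance the two disks of the $A_2$ structure on the genus-one filling of the max-$\mathrm{tb}$ trefoil, where $n = 2$ and $m = 3$.

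It then remains to certify that $L \cup \mathcal{D}$ is a closed arboreal Lagrangian skeleton for the pair $(B^4, \Lambda_\beta)$. Along each circle $c_j$ the local picture is two half-sheets of $L$ together with the sheet $D_j$ --- three half-planes meeting along a line, which is exactly the $2$-dimensional $A_2$ arboreal normal form (the cone over the tripod, times $\R$). Where two of the cycles $c_i, c_j$ intersect on $L$, I would check that after a small Lagrangian isotopy the local model is one of the higher arboreal ($A_k$ or $D_k$) normal forms. To see that the union is the \emph{full} skeleton rather than a proper isotropic subcomplex, I would exhibit a Liouville structure on $B^4$ adapted to the pair that is gradient-like for a function whose descending manifolds are precisely $L$ and the $D_j$; a handle and Euler-characteristic count then shows the handles are exhausted, so the Liouville flow retracts $B^4$ onto $L \cup \mathcal{D}$.

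The main difficulty is globalization. Each disk and each arboreal model is standard in isolation, but arranging all of the $D_j$ to be simultaneously embedded, Lagrangian for one Liouville structure, and in pairwise arboreal position --- while guaranteeing that the union is genuinely closed, in the sense that the flow retracts onto it rather than onto a larger set --- is the crux. Concretely, the obstacle is to control the combinatorics of how the cycles $c_j$ sit on $L$, so that only arboreal singularities occur and the complement deformation-retracts correctly; this is precisely where the disk techniques of \cite{ConwayEtnyreTosun2021Disks}, specialized to the positive-braid filling, do the work.
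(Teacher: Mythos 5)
Your proposal follows essentially the same route as the paper: both build the decomposable filling by pinch moves and obtain the compressing disks from the coupled Weinstein handle description of decomposable cobordisms in \cite[Section 3.2]{ConwayEtnyreTosun2021Disks}. The paper packages the ``crux'' you identify more cleanly --- each $D_j$ is realized as the core of the ambient $2$-handle canceling the ambient $1$-handle of the corresponding cycle-creating saddle (after a $0$-Weinstein homotopy pushing $\partial D_j$ onto the Lagrangian $1$-handle), so tangency to the Liouville flow and the fact that $L \cup \mathcal{D}$ is the full skeleton come for free from the coupled handle decomposition rather than requiring a separate retraction or Euler-characteristic argument.
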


\begin{figure}[t]
	\begin{overpic}[scale=.3]{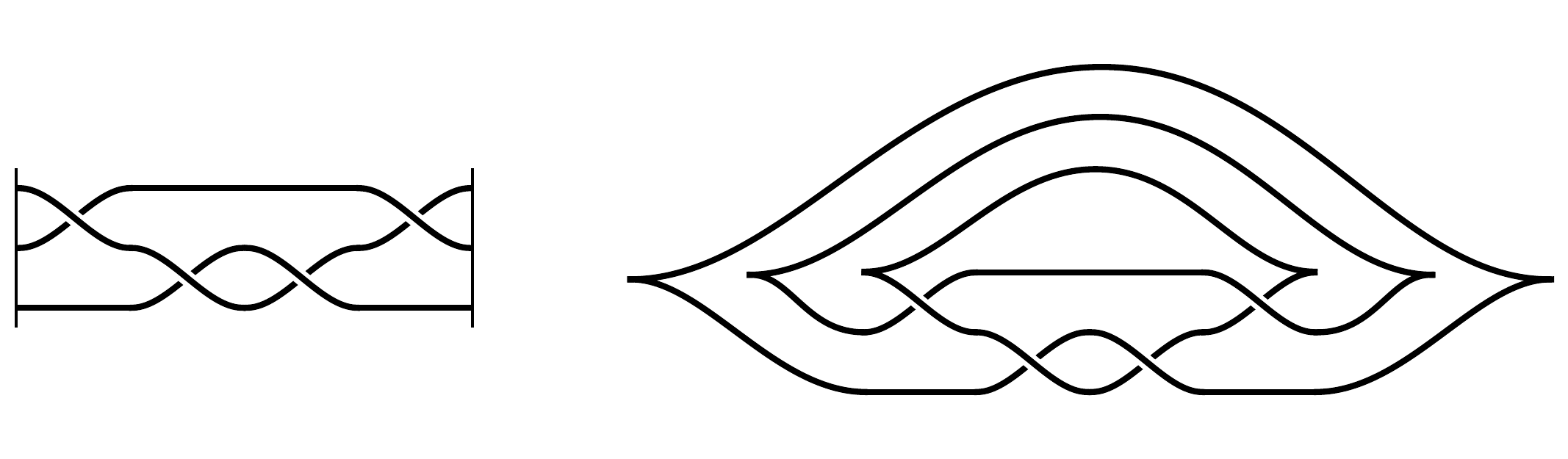}   
	\end{overpic}
    \vskip-0.1cm
	\caption{A positive braid $\beta$ and its rainbow closure $\Lambda_{\beta}$.}
	\label{fig:rainbow}
\end{figure}

For Legendrian links of this form, exact Lagrangian fillings have been constructed using decomposable cobordisms \cite{ekholm2012exactcobordisms,pan2016fillings}, Legendrian weaves \cite{TreumannZaslow,casals2022legendrian,casals_gao24,cggs24}, and conjugate fillings \cite{STWZ,casals-li22}. It is shown \cite{casals-li22,hughes_a-type} that all these constructions are Hamiltonian isotopic. Further, it is known \cite{casals2024microlocal} that the sheaf moduli $\mathcal{M}_1(\Lambda_\beta)$ admits a cluster structure, and every cluster seed of the cluster structure has a corresponding filling \cite{casals_gao24}. Specifically, Casals and Gao use an initial filling $L_{\mathrm{init}}$ constructed using conjugate fillings, and show that all the other fillings can be obtained by the process of  Lagrangian mutations. Conjecturally, all fillings of $\Lambda_\beta$ are obtained in this way. We can summarize the above discussion in the following:

\begin{corollary}\label{cor:CG-fillings}
    Given $\Lambda_\beta$, the rainbow closure of a positive braid $\beta$, there exists a Lefschetz fibration $p_\beta: (B^4, \lambda_{\mathrm{st}}, \phi_{\mathrm{st}}) \to \D^2$ such that all fillings constructed by Casals and Gao corresponding to all the seeds of the cluster variety $X_\beta$ are arc-admissible for $p_\beta$.
\end{corollary}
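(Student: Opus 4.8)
The plan is to assemble \cref{cor:CG-fillings} from three inputs already in hand: \cref{prop:decomposable-skel}, which supplies a CAL-skeleton; the construction of $p_L$ from \cref{thm:main}, which produces the required Lefschetz fibration; and \cref{thm: mutation}, which propagates arc-admissibility across Lagrangian mutation. First I would invoke \cref{prop:decomposable-skel} to fix a decomposable filling $L$ of $\Lambda_\beta$ together with a disk collection $\mathcal{D}$ so that $L \cup \mathcal{D}$ is a CAL-skeleton for $(B^4, \Lambda_\beta)$. Since the constructions of fillings of $\Lambda_\beta$ via decomposable cobordisms, weaves, and conjugate surfaces are all Hamiltonian isotopic \cite{casals-li22,hughes_a-type}, I may identify this $L$, up to Hamiltonian isotopy, with the initial filling $L_{\mathrm{init}}$ that Casals and Gao \cite{casals_gao24} use to generate the seed pattern. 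This is the step that ties the ambient skeleton provided by our methods to the specific filling underlying the cluster structure on $\mathcal{M}_1(\Lambda_\beta)$.

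Next I would feed the pair $(B^4, L \cup \mathcal{D})$ into the construction behind \cref{thm:main} to obtain the admissible Lefschetz fibration, which I set as $p_\beta := p_L$. By design $L_{\mathrm{init}} = L$ and every disk of $\mathcal{D}$ projects to an arc in the base $\D^2$, so $p_\beta$ is admissible for the entire skeleton $L \cup \mathcal{D}$. With this fibration fixed, \cref{thm: mutation} immediately yields that every filling obtained from $L_{\mathrm{init}}$ by a single Lagrangian mutation along a disk of $\mathcal{D}$ is arc-admissible for $p_\beta$.

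The remaining point, and the main obstacle, is that the Casals--Gao fillings indexed by the seeds of the cluster variety $X_\beta$ are reached from $L_{\mathrm{init}}$ not by a single mutation but by arbitrary finite sequences of mutations \cite{casals_gao24}. I would handle this by observing that Lagrangian mutation is realized within the fixed fibration $p_\beta$ as a combinatorial operation on the arc diagram: mutating $L$ along a disk $D \in \mathcal{D}$ replaces $\mathcal{D}$ by a mutated collection $\mathcal{D}'$ so that $L' \cup \mathcal{D}'$ is again a CAL-skeleton for $(B^4, \Lambda_\beta)$, and the proof of \cref{thm: mutation} exhibits the new arcs inside the same base disk without altering $p_\beta$. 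Consequently \cref{thm: mutation} applies verbatim to $(B^4, L' \cup \mathcal{D}')$, and iterating along any mutation sequence keeps $p_\beta$ fixed while tracking the corresponding arc. The delicate verification is precisely that mutation preserves the CAL-skeleton property and remains compatible with $p_\beta$ at each stage, that is, that the arc-picture is stable under iterated mutation; once that stability is in hand, the corollary follows by induction on the length of the mutation sequence, giving that every seed filling is arc-admissible for $p_\beta$.
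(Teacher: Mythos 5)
Your setup matches the paper's: build the CAL-skeleton via \cref{prop:decomposable-skel}, identify $L$ with $L_{\mathrm{init}}$ up to Hamiltonian isotopy via \cite{casals-li22,hughes_a-type}, and take $p_\beta$ to be the admissible fibration for $(B^4, L_{\mathrm{init}}\cup\mathcal{D})$ coming from \cref{thm:main} (the standard Weinstein Lefschetz picture). The gap is in how you pass from one mutation to arbitrary sequences. Your induction rests on the claim that mutating along a disk of $\mathcal{D}$ produces a configuration $L'\cup\mathcal{D}'$ that \emph{is again a CAL-skeleton}, so that \cref{thm: mutation} ``applies verbatim'' at the next stage. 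That claim is false in general: as noted in \cref{sec:mutation}, by \cite[Example 4.10]{casals_gao24} curves of the mutated $\mathbb{L}$-compressing system can become immersed, so the mutated configuration need not be a CAL-skeleton, and $p_\beta$ is in any case only known to be admissible for the \emph{original} skeleton. You flag this as ``the delicate verification'' but do not supply it, and as stated it cannot be supplied.

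Two things close the gap in the paper. First, \cref{thm: mutation} (in the form of \cref{prop: once_mutated_filling}) is already stated and proved for arbitrary finite sequences of mutations: the induction there is on arc-admissibility of the intermediate fillings for the \emph{fixed} $p_L$, not on preservation of the CAL-skeleton property, and it handles the possible failure of embeddedness of the new system by handleslides of the $1$-handles of $L$. So the iteration you are trying to re-derive is already in hand, by a mechanism deliberately designed to avoid your CAL-preservation claim. Second, the specific input needed for the corollary --- and absent from your argument --- is the result of \cite{casals_gao24} that for rainbow closures of positive braids the $\mathbb{L}$-compressing systems are represented by embedded simple closed curves and remain so after any sequence of mutations along $\mathcal{D}$; this is what guarantees that every seed filling of $X_\beta$ is actually reached by a well-defined sequence of mutations along disks in $\mathcal{D}$, to which \cref{prop: once_mutated_filling} then applies.
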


\subsection{Organization}

In \cref{section:preliminaries} we collect background information on coupled Weinstein handles, regular Lagrangians, and open book decompositions. In \cref{section:coupled_lefschetz_handles} we review and reformulate the work of Johns \cite{Johns2011Lefschetz} and S. Lee \cite{Lee2021Lefschetz} in a framework we call \textit{coupled Lefschetz handles}, and in \cref{sec:proof} we rephrase and prove \cref{thm:main}. Finally, in \cref{sec:mutation} we apply the above results to visualizing exact Lagrangian fillings of Legendrian links in $(S^3, \xi_{\mathrm{st}})$. We build a particular Lefschetz fibration compatible, in the sense of \cref{thm:main}, with a filling $L$ and its associated CAL-skeleton. We then show that fillings obtained by Lagrangian mutations are also compatible with the Lefschetz fibration, proving \cref{thm: mutation}. We finish by constructing fillings with associated CAL-skeleta, and proving \cref{prop:decomposable-skel} and \cref{cor:CG-fillings}.

\subsection{Acknowledgments}

The authors thank James Hughes and Oleg Lazarev for interest in our work and comments on preliminary draft. We thank Eduardo Fernandez and Kyle Hayden for helpful observations concerning an earlier version of this work related to \cref{q:new_q}. We additionally thank Orsola Capovilla-Searle, Austin Christian, John Etnyre, and Ko Honda for helpful discussions. 

\section{Preliminaries}\label{section:preliminaries}

In this section we provide specific background material on coupled Weinstein handles, regularity of Lagrangians, and open book decompositions. Beyond our exposition, none of the content in this section is original. In general, standard references for contact topology and Weinstein topology include \cite{Geiges2008Introduction,CieliebakEliashberg2012FromStein,Eliashberg2018Weinstein}.

\subsection{Coupled Weinstein handles}\label{subsec:coupled}

The model for a Weinstein $k$-handle in dimension $2n$, $0\leq k\leq n$, is the data $(h_{k}^{2n}, \omega, X_{\lambda}, \phi)$, where: 
\begin{align*}
    h^{2n}_k &= \D^k_{(y_1, \dots, y_k)} \times \D^{2n-k}_{(y_{k+1}, \dots, y_n, x_1, \dots, x_n)}, \\
    \omega &= \sum_{j=1}^n dx_{j} \wedge dy_j, \\
    X_{\lambda} &= \sum_{j=1}^k \left(2x_j\, \partial_{x_j}-y_j\, \partial_{y_j}\right) \, + \, \sum_{j=k+1}^n \left(\frac{1}{2}x_j\, \partial_{x_j} + \frac{1}{2}y_j\, \partial_{y_j}\right), \\
    \phi &= \sum_{j=1}^k \left(x_j^2-\frac{1}{2}y_j^2\right) \, + \, \sum_{j=k+1}^n \left(\frac{1}{4}x_j^2+\frac{1}{4}y_j^2\right).
\end{align*}
In the latter two expressions, the first (resp. second) sum is $0$ when $k=0$ (resp. $k=n$). Note that 
\[
\lambda:= \iota_{X_{\lambda}}\omega = \sum_{j=1}^k \left(2x_j\, dy_j + y_j\, dx_j\right) \, + \, \sum_{j=k+1}^n \left(\frac{1}{2}x_j\, dy_j - \frac{1}{2}y_j\, dx_j\right)
\]
is a primitive for $\omega$, hence $\lambda$ determines both $\omega = d\lambda$ and its Liouville vector field $X_{\lambda}$. It is therefore equivalent to refer to the data $(h_k^{2n}, \lambda, \phi)$. In practice, we will typically suppress all of the data and simply refer to a Weinstein $k$-handle by the notation $h_{k}^{2n}$.  

\begin{remark}
The core 
\[
\D^k_{(y_1, \dots, y_k)} \times \{0\}_{(y_{k+1}, \dots, y_n,x_1,\dots,x_n)} = \{y_{k+1} = \cdots = y_n =  x_1 = \cdots x_n = 0\}
\]
and the co-core 
\[
\{0\}_{(y_1, \dots, y_k)} \times \D^{2n-k}_{(y_{k+1}, \dots, y_n, x_1, \dots, x_n)} = \{y_1=\cdots = y_k = 0\}
\]
are isotropic and co-isotropic disks, respectively, which are both Lagrangian when $k=n$. In this case, the attaching and belt spheres are Legendrian spheres in their respective boundary components. 
\end{remark}

The notion of a coupled Weinstein handle, due to \cite{EliashbergGanatraLazarev2018Regular}, identifies a model Lagrangian $\ell$-handle inside the model Weinstein $k$-handle. 

\begin{definition}
A $2n$-dimensional \emph{coupled Weinstein $(k, \ell)$-handle} for $0\leq \ell \leq k \leq n$ is a pair $(h_{k}^{2n}, h^n_{\ell})$ where $h_k^{2n}$ is a $2n$-dimensional Weinstein $k$-handle as above and $h^n_{\ell} \subseteq h_k^{2n}$ is an $n$-dimensional Lagrangian $\ell$-handle properly embedded in $h_k^{2n}$ as 
\[
h^n_{\ell} = \D^{\ell}_{(y_1, \dots, y_\ell)} \times \D^{n-\ell}_{(x_{\ell+1},\dots,x_n)} = \{y_{\ell+1} = \cdots =y_n = x_1 = \cdots =x_{\ell}= 0\}.
\]
When $\ell=k$ we unambiguously refer to the pair as a \emph{coupled Weinstein $k$-handle}. For linguistic cohesion, we call the pair $(h_k^{2n}, \emptyset)$ consisting of just a Weinstein $k$-handle a \emph{trivial coupled Weinstein $k$-handle}. 
\end{definition}

Observe that the index-$k$ Liouville vector field $X_{\lambda}$ on $h_{k}^{2n}$ restricts to an index-$\ell$ vector field on $h_{\ell}^n$. There is not a unique such choice of Lagrangian handle inside a Weinstein handle, so the specification of $h_{\ell}^n$ in the model is additional data. See \cref{fig:coupled_handle} for the models of various $4$-dimensional coupled Weinstein $(2,\ell)$-handles. 

\begin{example}
    If $C$ and $K$ denote the core and co-core, respectively, of a critical Weinstein $n$-handle $h_n^{2n}$, then $(h_n^{2n}, C)$ is a coupled $(n, n)$-handle and $(h_n^{2n}, K)$ is a coupled $(n, 0)$-handle. 
\end{example}

\begin{figure}[ht]
	\begin{overpic}[scale=.42]{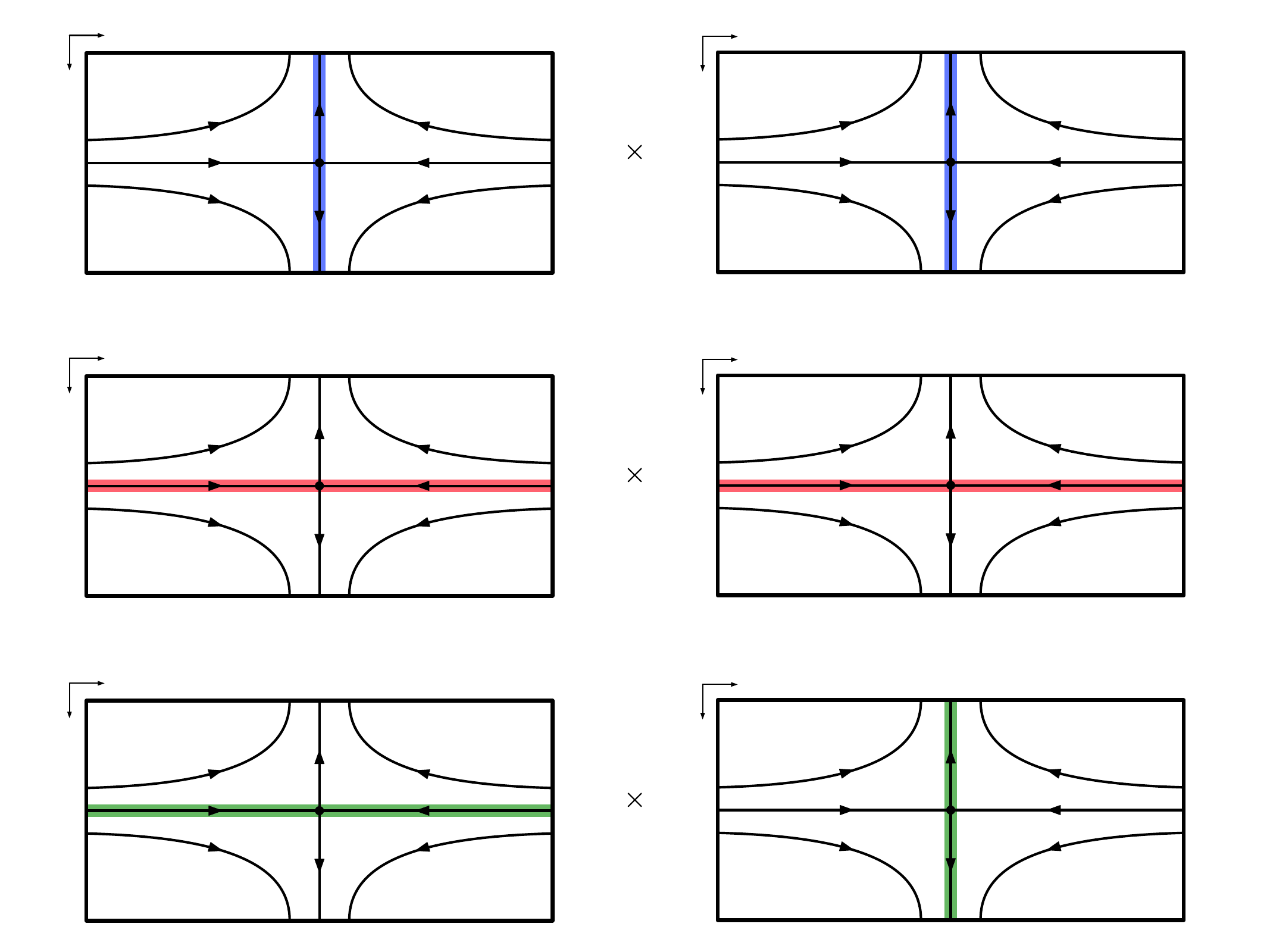}   
        \put(4.5,17){\tiny $x_1$}
        \put(8.5,21){\tiny $y_1$}
        \put(54.25,17){\tiny $x_2$}
        \put(58.5,21){\tiny $y_2$}
        
        \put(4.5,42.5){\tiny $x_1$}
        \put(8.5,46.5){\tiny $y_1$}
        \put(54.25,42.5){\tiny $x_2$}
        \put(58.5,46.5){\tiny $y_2$}

        \put(4.5,68){\tiny $x_1$}
        \put(8.5,72){\tiny $y_1$}
        \put(54.25,68){\tiny $x_2$}
        \put(58.5,72){\tiny $y_2$}

        \put(45,58){\tiny \textcolor{darkblue}{$(2,0)$-handle}}
        \put(45,33){\tiny \textcolor{darkred}{$(2,2)$-handle}}
        \put(45,7){\tiny \textcolor{darkgreen}{$(2,1)$-handle}}
        
	\end{overpic}
	\caption{Each row depicts a $4$-dimensional Weinstein $2$-handle $h_2^4 = \D^2 \times \D^2$ with Liouville vector field $X_{\lambda}$ in black. In each row, the highlighted portions specify a $2$-dimensional Lagrangian $\ell$-handle $h_{\ell}^2$ for $\ell=0$ in blue, $\ell=2$ in red, and $\ell=1$ in green. Note that the blue and red Lagrangian handles correspond to the co-core and core of the Weinstein handle, respectively.}
	\label{fig:coupled_handle}
\end{figure}

Coupled handle attachments are performed so that one simultaneously builds a Lagrangian submanifold via the Lagrangian handles inside the Weinstein cobordism built by the ambient Weinstein handles. To this end, we adopt the following language. 

\begin{definition}\label{def:coupled_handle_decomposition}
Let $L^n \subseteq (W^{2n}, \lambda, \phi)$ be a Lagrangian submanifold inside a Weinstein cobordism. We say that the pair $(W,L)$ admits a \emph{coupled Weinstein handle decomposition} if there is a sequence of coupled Weinstein handle attachments such that the ambient Weinstein handles give a handle decomposition of $W$ and the Lagrangian handles give a handle decomposition of $L$. 
\end{definition}

\begin{example}\label{example:coupled_handles}
Let $W= \D^*S^2$ be the disk cotangent bundle of $S^2$ and let $L\subseteq W$ be the $0$-section. We describe two different coupled Weinstein handle decompositions of $(W,L)$.
\begin{enumerate}
\item First, consider $(h^4_0, D) \cup (h^4_2, C)$ where $D=h_0^2$ is the standard Lagrangian disk filling of the standard Legendrian unknot $\partial D \subseteq (S^3, \xi_{\mathrm{st}}) = \partial h^4_0$, $C= h_2^2$ is the core of the critical handle $h^4_2$, and $h_2^4$ is attached along $\partial D$. Then $h^4_0 \cup h^4_2 = \D^*S^2$ and $D \cup C$ is the $0$-section. 

\item Next consider
\[
(h^4_0, \emptyset) \cup (h^4_1, \emptyset) \cup (h_2^4, K) \cup (\tilde{h}^4_2, C)
\]
where $h_2^4$ is a Weinstein $2$-handle that cancels the Weinstein $1$-handle $h^4_1$, $K=h_0^2$ is the co-core of $h^4_2$, and $C= h_2^2$ is the core of $\tilde{h}^4_2$. We attach $\tilde{h}^4_2$ along $\partial K$. The resulting Weinstein domain is $1$-Weinstein homotopic to $\D^*S^2$ and the resulting Lagrangian $K\cup C$ is identified under this homotopy with the $0$-section.
\end{enumerate}
\end{example}

\begin{example}
Elementary Lagrangian cobordisms inside a symplectization can be witnessed via a sequence of coupled Weinstein handle attachments. Fix $0\leq k \leq n-1$ and consider a coupled Weinstein $k$-handle $(h_k^{2n}, h_k^n)$ followed by a trivial coupled Weinstein ($k+1$)-handle $(h_{k+1}^{2n}, \emptyset)$, such that the pair $h_k^{2n} \cup h_{k+1}^{2n}$ is canceling at the level of Weinstein handles. Up to $1$-Weinstein homotopy, the result is a trivial Weinstein cobordism (homotopic to a symplectization) which contains an elementary index $k$ exact Lagrangian cobordism. Adopting the language of \cite{GanatraPardonShende2024Sectorial}, we call such a sequence of handle attachments an \emph{exact embedded Lagrangian $k$-handle}; see also \cite{DimitroglouRizellGolovko2012Legendrian,BourgeoisSabloffTraynor2015Generating}. (Note that this construction is more or less part of the proof that decomposable Lagrangians are regular, due to \cite{ConwayEtnyreTosun2021Disks}.)
\end{example}

\subsection{Regular Lagrangians}\label{subsec:reg_review}

We recall the notion of regularity for Lagrangian submanifolds as described in \cite[\S 2]{EliashbergGanatraLazarev2018Regular}. Both the definition of regularity and various equivalent notions are summarized with the following proposition. 

\begin{proposition}[Characterizations of regularity]\label{prop:regularity_characterization}
Let $(W^{2n}, \lambda, \phi)$ be a Weinstein domain and $(L, \partial L) \subseteq (W, \partial W)$ a properly embedded Lagrangian submanifold with (possibly empty) Legendrian boundary. The following are equivalent. 
\begin{enumerate}
    \item The Lagrangian $L$ is \emph{regular}. 
    \item There is a $1$-Weinstein homotopy from $(\lambda, \phi)$ to $(\lambda', \phi')$ for which $L$ remains Lagrangian and such that the Liouville vector field $X_{\lambda'}$ is everywhere tangent to $L$, i.e. $\lambda'\mid_L \equiv 0$. For this reason we also say that the Weinstein structure $(\lambda', \phi')$ is \emph{tangent to $L$}. 
    \item There is a $1$-Weinstein homotopy from $(\lambda, \phi)$ to $(\lambda', \phi')$ for which $L$ remains Lagrangian such that $(W, \lambda', \phi')$ admits a decomposition
    \[
    W \cong \D^*L \cup W_L
    \]
    where, for some regular value $c$ of $\phi'$, $\D^*L \cong \{\phi' \leq c\}$ is the standard Weinstein structure on $\D^*L$, and $W_L \cong \{\phi' \geq c\}$, the \emph{complementary cobordism to $L$}, is a Weinstein cobordism.

    \item The pair $(W, L)$ admits a coupled Weinstein handle decomposition as in \cref{def:coupled_handle_decomposition}.
\end{enumerate}
\end{proposition}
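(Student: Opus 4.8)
The plan is to prove \cref{prop:regularity_characterization} by establishing a cycle of implications among the four characterizations, namely $(4) \Rightarrow (3) \Rightarrow (2) \Rightarrow (1)$ together with $(1) \Rightarrow (4)$, where I take statement $(1)$ (that $L$ is regular) to be definitionally the existence of a coupled Weinstein handle decomposition or one of the equivalent formulations in \cite{EliashbergGanatraLazarev2018Regular}. Since the excerpt states that none of this section is original, the real work is assembling the standard equivalences cleanly; I would begin by pinning down which condition is taken as the definition of regularity in \cite[\S 2]{EliashbergGanatraLazarev2018Regular} and orient the cycle so that the definition is the convenient hinge.

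First I would show $(4) \Rightarrow (3)$. Given a coupled Weinstein handle decomposition, the Lagrangian handles $h^n_\ell$ assemble into a handle decomposition of $L$, and because each Lagrangian handle is tangent to the ambient Liouville field $X_\lambda$ by construction, one can arrange the sublevel set consisting of a neighborhood of $L$ to be Weinstein-isomorphic to the standard structure on $\D^*L$. The key local input is that a coupled $(k,\ell)$-handle, restricted to a Weinstein neighborhood of its Lagrangian handle, is modeled on the cotangent bundle of the handle; gluing these models along the skeleton of $L$ yields $\D^*L \cong \{\phi' \le c\}$, and the remaining handles furnish the complementary cobordism $W_L = \{\phi' \ge c\}$. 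Next, $(3) \Rightarrow (2)$ is essentially immediate: on the standard $\D^*L$ piece the Liouville field is the radial scaling field in the fibers, which is tangent to the $0$-section $L$, so $\lambda' \mid_L \equiv 0$ there, and one checks the primitive vanishes along $L$ on the overlap as well. Then $(2) \Rightarrow (1)$ is either definitional or a direct unwinding: a Weinstein structure tangent to $L$ is exactly the hypothesis from which \cite{EliashbergGanatraLazarev2018Regular} deduces regularity.

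The step I expect to be the main obstacle is $(1) \Rightarrow (4)$, i.e. extracting an honest coupled handle decomposition from the abstract regularity hypothesis. Here I would invoke the Weinstein structure tangent to $L$ from $(2)$ and run a relative Morse/handle theory argument: choose a Morse function on $L$ compatible with $\phi'\mid_L$, and show that the ambient Weinstein handle attachments can be organized (up to $1$-Weinstein homotopy, allowing birth-death moves) so that each handle carrying a piece of $L$ is a coupled handle with the correct Lagrangian model. The delicate points are controlling the indices so that each Lagrangian $\ell$-handle sits inside an ambient $k$-handle with $\ell \le k \le n$, and ensuring the isotropic/Lagrangian attaching data can be put in the standard coupled model simultaneously; this requires a parametrized Weinstein neighborhood theorem relative to the skeleton and careful bookkeeping of the order of handle attachments. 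I would lean on the normal form for coupled handles established in \cite{EliashbergGanatraLazarev2018Regular} and cite their construction rather than reprove it, since the proposition is explicitly a recollection of known characterizations.
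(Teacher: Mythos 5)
The paper offers no proof of \cref{prop:regularity_characterization}: it is stated as a summary of \cite[\S 2]{EliashbergGanatraLazarev2018Regular}, in a section the authors explicitly flag as containing nothing original, so there is no in-paper argument to match yours against. Your sketch is essentially the standard one, but two points deserve attention. First, there is a latent circularity in your cycle: in the step $(1)\Rightarrow(4)$ you propose to ``invoke the Weinstein structure tangent to $L$ from $(2)$,'' yet $(1)\Rightarrow(2)$ is not among the implications your cycle establishes unless $(2)$ is taken as the definition of regularity. It is --- in \cite{EliashbergGanatraLazarev2018Regular} a Lagrangian is regular precisely when the Weinstein structure can be homotoped, keeping $L$ Lagrangian, to one tangent to $L$ --- so you should commit to that rather than hedge, at which point $(1)\Leftrightarrow(2)$ is definitional and the real content is the cycle $(2)\Rightarrow(3)\Rightarrow(4)\Rightarrow(2)$. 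Second, the cleanest route to the coupled handle decomposition is from $(3)$ rather than from $(2)$ directly: once $W\cong \D^*L\cup W_L$, choose a Morse function and gradient-like vector field on $L$ and apply the cotangent-bundle normal form (\cref{lemma:cot_bund_str}) to realize each smooth $\ell$-handle of $L$ as the Lagrangian core of a coupled Weinstein $\ell$-handle $\left(h_\ell^{2n}, h_\ell^n\right)$, with the handles of $W_L$ attached as trivial coupled handles; this sidesteps the ``relative Morse theory'' and index bookkeeping you flag as delicate, and it is exactly the argument the paper runs later when it actually needs this implication, in the proof of \cref{prop:regular_coupled_lefschetz_handlebody}. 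The remaining implication $(4)\Rightarrow(2)$ is immediate from the local model, since the Liouville field of a coupled handle is tangent to its Lagrangian handle by construction.
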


\begin{remark}
Informally, (3) says that $L\subseteq W$ is regular if, up to Weinstein homotopy, there is a handle presentation of $W$ given by starting with $\D^*L$ and successively attaching Weinstein handles. Note that if $L$ has boundary, the Weinstein structure on $\D^*L$ must be chosen so that the Liouville vector field points out of the vertical boundary $\D^*L\mid_{\partial L}$. Moreover, we can require the attaching locus of the additional handles to avoid $\partial L$. 
\end{remark}

Regarding the ``standard" Weinstein structure on the cotangent bundle, we appeal to the following lemma, which is more or less \cite[Lemma 12.8]{CieliebakEliashberg2012FromStein}; we adopt the specific statement from \cite[Lemma 3.9]{Breen2024Regularly}, where there is also a detailed proof. Given a manifold $L$, we denote by $\lambda_{\mathrm{st}} = \vec{p}\, d\vec{q}$ the canonical Liouville form on the cotangent bundle, where $\vec{q}$ is any local coordinate system on $L$ and $\vec{p}$ is the corresponding dual coordinate system.

\begin{lemma}[The standard Weinstein structure on the cotangent bundle]\label{lemma:cot_bund_str}
Let $L$ be a compact $n$-dimensional cobordism equipped with a Riemannian metric. Let $X\in \mathfrak{X}(L)$ be any vector field. There is a Liouville homotopy from the canonical Liouville form $\lambda_{\mathrm{st}} = \vec{p}\, d\vec{q}$ to a Liouville form $\lambda$, compactly supported outside of a neighborhood of the horizontal boundary $\partial_{\mathrm{hor}} \D^*L = \mathbb{S}^*L$, with the following properties. 
\begin{enumerate}
    \item The Liouville vector field $X_{\lambda}$ coincides with $X$ along the $0$-section.\label{cot_lvf_1} 
    
    \item If $X$ is gradient-like for a function $\phi:L \to \R$, and if the eigenvalues of $X$ at zeroes have real part $< 1$, then $X_{\lambda}$ is gradient-like for a function $\hat{\phi}:\D^*L \to \R$ that agrees with $\ve \phi(\vec{q}) + \norm{\vec{p}}^2$ near $\{\vec{p}=0\}$ for some sufficiently small $\ve > 0$, has no critical points away from $\{\vec{p}=0\}$, and agrees with $\norm{\vec{p}}^2$ near $\partial_{\mathrm{hor}} \D^*L$.\label{cot_lvf_2}
\end{enumerate} 
In particular, if $L$ is a cobordism  and $\phi:L \to \R$ is Morse, then there is a gradient-like vector field $X$ for $\phi$ such that $(\D^*L, \lambda, \hat{\phi})$ is Weinstein. Moreover, any two such choices of Weinstein structures are Weinstein homotopic, and if the two structures agree near $\partial_{\pm} L$ then the homotopy may be taken to be constant near $\partial_{\pm} L$.   
\end{lemma}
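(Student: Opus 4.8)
The plan is to realize $\lambda$ as an exact modification of $\lambda_{\mathrm{st}}$ that leaves the symplectic form unchanged, making the Liouville homotopy transparent. To $X$ associate the fiberwise-linear \emph{momentum function} $H_X\colon \D^*L \to \R$, $H_X(\vec q, \vec p) = \vec p(X_{\vec q})$; its Hamiltonian vector field for $\omega_{\mathrm{st}} = d\lambda_{\mathrm{st}}$ is the cotangent lift $\tilde X$ of $X$, which covers $X$ and restricts to $X$ along the $0$-section. Fix a cutoff $\rho = \rho(\norm{\vec p}^2)$ equal to $1$ near $\{\vec p = 0\}$ and to $0$ near $\partial_{\mathrm{hor}}\D^*L = \mathbb{S}^*L$, and set
\[
\lambda_t := \lambda_{\mathrm{st}} + t\,d(\rho\, H_X), \qquad \lambda := \lambda_1, \qquad t \in [0,1].
\]
Since $d\lambda_t = \omega_{\mathrm{st}}$ for every $t$, each $\lambda_t$ is a Liouville form and $\{\lambda_t\}$ is the required Liouville homotopy, equal to $\lambda_{\mathrm{st}}$ near $\mathbb{S}^*L$ by construction. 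The associated field is $X_{\lambda} = X_{\mathrm{st}} + X_{\rho H_X}$ with $X_{\mathrm{st}} = \vec p\,\partial_{\vec p}$; along the $0$-section $\rho \equiv 1$ and $X_{\mathrm{st}} = 0$, so $X_{\lambda}$ restricts there to $\tilde X = X$, establishing (1).

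For (2) I would take $\hat\phi := \ve\,(\phi\circ\pi) + \norm{\vec p}^2$ near the $0$-section, where $\pi\colon \D^*L \to L$ is the projection, interpolated to $\norm{\vec p}^2$ in the outer region where $\rho \equiv 0$. The task is to verify $d\hat\phi(X_{\lambda}) > 0$ off the zero locus of $X_\lambda$. Near the $0$-section the dominant terms are $\ve\,d\phi(X) > 0$ (as $X$ is gradient-like for $\phi$) and $d\norm{\vec p}^2(X_{\mathrm{st}}) = 2\norm{\vec p}^2 \geq 0$, while the vertical part of $\tilde X$, the derivatives of $\rho$, and the pairing of $\ve\,d\phi$ with vertical vectors are all of lower order; choosing $\ve > 0$ small makes the sum strictly positive away from $\{\vec p = 0\}\cap\mathrm{Crit}(\phi)$. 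In the outer region $X_{\lambda} = X_{\mathrm{st}}$ and $\hat\phi = \norm{\vec p}^2$ are visibly compatible. Hence $X_{\lambda}$ is gradient-like for $\hat\phi$, its zeros lie on the $0$-section exactly over $\mathrm{Crit}(\phi)$, and $\hat\phi$ has the stated normal forms.

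It remains to pin down the indices, which is where the eigenvalue hypothesis enters. At a zero $(\vec q_0, 0)$ set $A := DX_{\vec q_0}$; a direct computation shows the linearization of $X_{\lambda} = \tilde X + X_{\mathrm{st}}$ is block-triangular with diagonal blocks $A$ (horizontal) and $\mathrm{Id} - A^{\top}$ (vertical). The hypothesis that the eigenvalues of $A$ have real part $< 1$ forces those of $\mathrm{Id} - A^{\top}$ to have positive real part, so every fiber direction is expanding. Thus each critical point is nondegenerate with stable manifold inside the $0$-section, and its Weinstein index equals the Morse index of $\phi$ at $\vec q_0$, which is $\le n$. I expect the main obstacle to be the global verification of the gradient-like inequality across the cutoff region, together with this linear-algebra bookkeeping guaranteeing all indices remain $\le n$; everything else is soft.

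For the closing assertions, given a Morse $\phi$ I pick any gradient-like $X$ and rescale $X \rightsquigarrow cX$ with $c > 0$ small: this preserves gradient-likeness and scales the eigenvalues of each $A$ by $c$, so the hypothesis of (2) holds and $(\D^*L, \lambda, \hat\phi)$ is Weinstein. For uniqueness up to Weinstein homotopy, I note that all auxiliary data — the metric, the gradient-like field $X$ over a fixed $\phi$, the cutoff $\rho$, and the constant $\ve$ — vary in contractible spaces, so any two admissible choices are joined by a path of such data; running this path through the construction produces the desired Weinstein homotopy. When the two structures already agree near $\partial_{\pm} L$, I would carry out the interpolation relative to a neighborhood of $\partial_{\pm} L$, keeping the homotopy constant there.
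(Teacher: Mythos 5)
The paper does not actually prove this lemma --- it is imported from \cite[Lemma 12.8]{CieliebakEliashberg2012FromStein} via \cite[Lemma 3.9]{Breen2024Regularly} --- and your construction is the standard one used there: perturb $\lambda_{\mathrm{st}}$ by the exact term $d(\rho H_X)$ built from the fiberwise-linear momentum function, so that the Liouville field becomes $\vec{p}\,\partial_{\vec{p}}$ plus the cotangent lift $\tilde X$ of $X$ near the zero section. (One sign to fix: with $\iota_V\omega = dF$ and $\omega = d\vec{p}\wedge d\vec{q}$, the symplectic gradient of $H_X$ covers $-X$, so use $-H_X$.) Your linearization bookkeeping, with diagonal blocks $A$ and $\mathrm{Id}-A^{\top}$, is correct and correctly identifies where the eigenvalue hypothesis enters.

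The genuine soft spot is the claim that the vertical part of $\tilde X$ contributes only ``lower order'' terms to $d\hat\phi(X_{\lambda})$. It does not: writing $\tilde X = X^i\partial_{q_i} - p_j\,\partial_{q_i}X^j\,\partial_{p_i}$, one computes $d\norm{\vec{p}}^2(X_{\lambda}) = 2\,\vec{p}^{\,\top}\bigl(\mathrm{Id}-\rho A(\vec{q})\bigr)\vec{p} + \cdots$, so the correction $-2\rho\,\vec{p}^{\,\top}A\vec{p}$ is of the same order $O(\norm{\vec{p}}^2)$ as the favorable term $2\norm{\vec{p}}^2$, and its sign is governed by the \emph{symmetric part} of $A$, which is not controlled by the hypothesis that the eigenvalues of $A$ have real part $<1$ (consider a nilpotent $A$ with a large off-diagonal entry). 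Hence $\ve\phi+\norm{\vec{p}}^2$ need not literally satisfy the pointwise gradient-like inequality near a zero. The standard repairs are to replace $\norm{\vec{p}}^2$ by $\vec{p}^{\,\top}S(\vec{q})\,\vec{p}$ for a positive-definite Lyapunov form $S$ with $S(\mathrm{Id}-A^{\top})+(\mathrm{Id}-A)S>0$ (equivalently, adjust the metric near the zeros), or to invoke the general existence of Lyapunov functions near hyperbolic zeros; for the ``in particular'' clause that the paper actually uses, your rescaling $X\rightsquigarrow cX$ already forces $\norm{DX}<1$ and removes the problem. Finally, since $L$ is a cobordism, you should also say a word about the vertical boundary $\D^*L|_{\partial L}$ --- the Liouville field must exit along $\partial_+L$ and enter along $\partial_-L$ as dictated by $X$ there --- which the horizontal cutoff $\rho$ alone does not address. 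With these points patched the argument is complete and matches the cited proofs.
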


In the case that $L$ is a disk, there is another nice characterization of regularity. 

\begin{proposition}\cite[Proposition 2.3]{EliashbergGanatraLazarev2018Regular}
Let $D\subseteq (W^{2n}, \lambda, \phi)$ be a properly embedded Lagrangian disk with Legendrian boundary $\partial D \subseteq \partial W$. Then $D$ is regular if and only if, up to $1$-Weinstein homotopy, there is a Weinstein handle decomposition of $W$ such that $D$ is the co-core of a critical handle.   
\end{proposition}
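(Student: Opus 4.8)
The plan is to prove both implications using the characterizations of regularity in \cref{prop:regularity_characterization}, the essential geometric input being a handle-theoretic description of the $0$-section of $\D^* D^n$. The easy direction is $(\Leftarrow)$: suppose that, after a $1$-Weinstein homotopy, $W$ has a Weinstein handle decomposition in which $D$ is the co-core of a critical handle $h_n^{2n}$. The co-core of $h_n^{2n}$ is precisely the Lagrangian $0$-handle $h_0^n = \{y_1 = \cdots = y_n = 0\}$, so the pair $(h_n^{2n}, D)$ is a coupled $(n,0)$-handle. Equipping this one critical handle with its coupled structure and every other handle with the trivial coupled structure $(h_k^{2n}, \emptyset)$ produces a coupled Weinstein handle decomposition of $(W, D)$ in the sense of \cref{def:coupled_handle_decomposition}: the ambient handles recover $W$, while the single Lagrangian $0$-handle recovers the disk $D$. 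Hence $D$ is regular by the equivalence $(4)\Rightarrow(1)$ of \cref{prop:regularity_characterization}.

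For the forward direction $(\Rightarrow)$, suppose $D$ is regular. By $(1)\Rightarrow(3)$ of \cref{prop:regularity_characterization}, after a $1$-Weinstein homotopy we may assume $W \cong \D^* D \cup W_D$, where $\D^* D = \D^* D^n$ carries its standard structure, $D$ is the $0$-section, and the complementary cobordism $W_D$ is attached along a neighborhood of the horizontal boundary $\mathbb{S}^* D$, away from the Legendrian $\partial D$. It therefore suffices to produce a Weinstein handle decomposition of $\D^* D^n$, relative to a neighborhood of $\mathbb{S}^* D$, in which the $0$-section is the co-core of a critical handle; attaching the handles of $W_D$ afterwards then exhibits $D$ as the co-core of a critical handle in a decomposition of all of $W$, with $\partial D$ persisting on $\partial W$ as the belt sphere.

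The crux is thus the following re-presentation of $\D^* D^n$, the general-$n$ analogue of \cref{example:coupled_handles}(2). I claim that the standard Weinstein structure on $\D^* D^n \cong B^{2n}$ is $1$-Weinstein homotopic, rel a neighborhood of $\mathbb{S}^* D$, to the decomposition
\[
h_0^{2n} \cup h_{n-1}^{2n} \cup h_n^{2n},
\]
in which the critical handle $h_n^{2n}$ cancels the subcritical handle $h_{n-1}^{2n}$ (so the total space is again the ball) and the co-core of $h_n^{2n}$ agrees, up to the homotopy, with the $0$-section $D^n$. I would verify this by making the canceling pair explicit: attach $h_{n-1}^{2n}$ and then a canceling $h_n^{2n}$ along a Legendrian whose belt sphere is $\partial D \subseteq \mathbb{S}^* D$, and check directly in the local models that the co-core disk of $h_n^{2n}$ coincides, up to Lagrangian isotopy fixing the boundary, with the $0$-section. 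For $n=2$ this is exactly the content of the first three handles in \cref{example:coupled_handles}(2), where $h_0^4 \cup h_1^4 \cup h_2^4$ recovers $\D^* D^2$ with the $0$-section realized as the co-core $K$ of $h_2^4$.

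The main obstacle is confirming that this canceling-pair construction goes through for all $n$ and, crucially, that the homotopy realizing it can be taken to be supported away from $\mathbb{S}^* D$, so that it is compatible with the subsequent attachment of $W_D$ and does not disturb the Legendrian $\partial D$. I expect the bookkeeping of the attaching data of $h_n^{2n}$ and the identification of its co-core with the $0$-section to be the only genuinely technical points; once they are in hand, the two implications above complete the proof.
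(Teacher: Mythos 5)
Your argument is essentially the standard one: the paper does not prove this statement itself but cites \cite[Proposition 2.3]{EliashbergGanatraLazarev2018Regular}, and your reduction of the forward direction to re-presenting $(\D^*D^n, D^n)$ as $(h_0^{2n},\emptyset)\cup(h_{n-1}^{2n},\emptyset)\cup(h_n^{2n},h_0^n)$ via the birth of a canceling $(n-1,n)$-pair, with the $0$-section becoming the co-core, is exactly the content of that cited proof, which the paper reuses in \cref{example:coupled_handles}(2) and in \cref{lemma:coupled_weinstein_lefschetz_handle_htpy} (the case $\ell=0$); the backward direction via the coupled $(n,0)$-handle and \cref{prop:regularity_characterization}(4) is likewise as expected. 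The only caveats are that you assert rather than verify this central homotopy (it is the entire technical content of the hard direction), and that $\partial D$ lies in the vertical boundary $\D^*D\mid_{\partial D}$ rather than in $\mathbb{S}^*D$, so the homotopy should be taken compactly supported in the interior (as in \cref{lemma:coupled_weinstein_lefschetz_handle_htpy}) rather than merely rel a neighborhood of $\mathbb{S}^*D$.
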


\subsection{Lefschetz fibrations and open book decompositions}

First, we carefully state the data of a Weinstein Lefschetz fibration.

\begin{definition}\label{def:WLF}
A \emph{Weinstein Lefschetz fibration} is the data $(p:W^{2n} \to \D^2, \lambda,\phi)$ satisfying the following properties: 
\begin{enumerate}
    \item The manifold $W^{2n}$ is a compact domain with corners and a smoothing $W^{\mathrm{sm}}$ such that the \emph{total space} $(W^{\mathrm{sm}}, \lambda, \phi)$ is a Weinstein domain, where $\lambda$ is a Liouville form and $\phi$ is a Lyapunov function for the corresponding Liouville vector field with $\partial W^{\mathrm{sm}}$ as a level set.
    \item The map $p:W \to \D^2$ is a smooth fibration except at finitely many critical points in the interior of $W$ with distinct critical values, around which there are local holomorphic coordinates such that 
\[
p(z) = p(z_0) + \sum_{j=1}^n z_j^2 \quad \text{ and } \quad \lambda = i\sum_{j=1}^n (z_j\, d\overline{z}_j -\overline{z}_j \,dz_j).
\]

    \item The boundary $\partial W$ decomposes as  
    \[
    \left[\partial_{\mathrm{vert}} W := p^{-1}(\partial \D^2)\right] \,\cup\, [\partial_{\mathrm{hor}} W: = \bigcup_{x\in \D^2} \partial ( p^{-1}(x))]
    \]
    meeting at a codimension-$2$ corner, where $p|_{\partial_{\mathrm{vert}} W}$ and $p|_{\partial_{\mathrm{hor}} W}$ are fiber bundles.
    \item On each regular fiber, $\lambda$ induces a Weinstein structure such that the contact form $\lambda\mid_{\partial (p^{-1}(x))}$ is independent of $x\in \D^2$, and $d\lambda$ is nondegenerate on $\ker dp(z)$ for all $z\in W$ (not just regular points). 
\end{enumerate}
\end{definition}

An \emph{open book decomposition} of a closed co-oriented contact manifold $(M^{2n-1}, \xi)$ is a pair $(B, \pi)$, where 
\begin{enumerate}
    \item $B^{2n-3}\subset M$ is a codimension-$2$ contact submanifold called the \emph{binding},
    \item $\pi: M \setminus B \to S^1$ is a fibration that agrees with an angular coordinate $\theta$ in a tubular neighborhood $B\times \D^2_{(r, \theta)}$ of $B$, and 
    \item there is a Reeb vector field $R_{\alpha}$ for $\xi$ everywhere transverse to each \emph{page} $\pi^{-1}(\theta)$. 
\end{enumerate}
If $(\pi^{-1}(\theta), \alpha\vert_{\pi^{-1}(\theta)})$ admits the structure of a completed Weinstein domain for every $\theta\in S^1$, we say that the open book decomposition is \emph{strongly Weinstein}. The monodromy of the fibration $\pi$ is an exact symplectomorphism, and thus gives rise to an \textit{abstract open book}, i.e. a pair $(W_0, \psi)$ where $W_0^{2n-2}$ is a Weinstein domain and $\psi:W_0 \to W_0$ is an exact symplectomorphism which is the identity near $\partial W_0$. A \emph{positive stabilization} of an abstract open book is the passage from $(W_0,\psi)$ to $(W_0\cup h_{n-1}^{2n-2}, \tau \circ\psi)$ where $h_{n-1}^{2n-2}$ is a critical handle attached along any Legendrian sphere $\Lambda \subset \partial W$ such that $\Lambda$ bounds a (regular) Lagrangian disk $D\subset W_0$, and $\tau$ is the symplectic Dehn twist performed along the exact Lagrangian sphere $D \cup C$, where $C$ is the core of $h_{n-1}^{2n-2}$. We call a sequence of stabilizations a \textit{multistabilization}.  Positively stabilizing an open book preserves the contactomorphism type of the contact manifold, and in fact: 

\begin{theorem}\cite{Giroux2002ICM,BreenHondaHuang2023Giroux}\label{thm:GC}
Any two strongly Weinstein open book decompositions of a contact manifold admit a common positive multistabilization.     
\end{theorem}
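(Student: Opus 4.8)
The plan is to argue by convex hypersurface theory, following the method used to prove \cref{thm:GP17} in \cite{BreenHondaHuang2023Giroux}. The key reinterpretation is that a strongly Weinstein open book of $(M, \xi)$ is equivalent to a convex-hypersurface decomposition: a page $P = \pi^{-1}(\theta_0)$ is a codimension-one Weinstein hypersurface, and doubling it across the binding $B$ produces a closed convex hypersurface $\widehat P \subset M$ whose dividing set is $B$ and whose two complementary regions recover the Weinstein page, with the monodromy $\psi$ encoded by a neighborhood of $\widehat P$. Two open books supporting the same $\xi$ thus yield two convex hypersurface decompositions of the \emph{same} contact manifold, and the goal becomes to connect them by elementary convex moves that lift to positive stabilizations.

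The first step is to place the two decompositions in a single parametric framework. I would join the two supporting (Giroux) structures by a generic one-parameter family and apply the existence and genericity results of convex hypersurface theory \cite{HondaHuang2019Convex}: after a $C^0$-small perturbation the family of hypersurfaces is convex for all parameters outside a finite set $t_1 < \cdots < t_m$, and at each exceptional $t_j$ the dividing set---and hence the induced Weinstein page and its monodromy---undergoes a single elementary bypass/birth--death bifurcation.

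The heart of the proof is to match each elementary bifurcation with a positive stabilization, i.e. the attachment of a critical handle $h^{2n-2}_{n-1}$ along a Legendrian bounding a regular Lagrangian disk $D$, together with the Dehn twist $\tau$ along $D \cup C$, as in the definition preceding the statement. A bypass move that is positive with respect to the coorientation should enlarge the page by exactly such a handle and compose the monodromy with exactly such a twist, all while preserving the strongly Weinstein condition that every page be a completed Weinstein domain. Traversing the family and realizing each crossing by a positive stabilization then exhibits the two open books as isotopic after finitely many positive stabilizations, which is the desired common positive multistabilization.

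I expect the principal obstacle to be positivity. The underlying topological/contact correspondence only produces a common stabilization through arbitrary (positive and negative) moves; upgrading to \emph{positive} stabilizations requires controlling the sign of each bifurcation as the family is crossed and trading any negative move for positive ones at the cost of extra stabilizations. A second, more technical burden is promoting the contact-topological control supplied by convex hypersurface theory to genuine Liouville/Weinstein control on every page throughout the family---tracking the primitive and its Lyapunov function through each birth--death singularity. This parametric bookkeeping over the finitely many $1$-parametric degenerations is exactly what forces the conclusion to be phrased via multistabilization and $1$-Weinstein homotopy.
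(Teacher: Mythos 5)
This statement is imported into the paper as an external result, cited to Giroux's ICM address and to \cite{BreenHondaHuang2023Giroux}; the paper supplies no proof of its own, so your attempt can only be measured against the argument in the cited reference. Your outline does track the broad strategy there: reinterpret a supporting open book as a convex hypersurface decomposition (the double of a page is a convex hypersurface with dividing set the binding), join two such decompositions by a generic one-parameter family, and reduce to finitely many elementary bifurcations of convex hypersurfaces. Up to that point the plan is the right one.

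The gap is in the central step. A generic bifurcation in a one-parameter family of convex hypersurfaces is a bypass attachment, and a bypass does \emph{not} ``enlarge the page by exactly such a handle and compose the monodromy with exactly such a twist.'' A bypass modifies the dividing set, and hence the Weinstein page, by simultaneously adding and deleting handles, so the open books before and after a single bypass are not related by one positive stabilization. The actual content of \cite{BreenHondaHuang2023Giroux} is the lemma that the pre- and post-bypass open books admit a \emph{common} positive multistabilization; proving this requires trading the ``negative'' half of the bypass for extra positive stabilizations on both sides, and essentially all of the work of the theorem lives in that trade. Your sketch defers exactly this point to the closing paragraph (``upgrading to positive stabilizations requires controlling the sign of each bifurcation \dots and trading any negative move for positive ones''), so as written the proposal assumes the key lemma rather than proving it. The secondary issue you raise --- carrying genuine Weinstein (not merely contact-topological) control on every page through the family --- is real but subordinate; it is handled in the reference by working throughout with strongly Weinstein convex hypersurfaces, and it would not rescue the argument without the bypass-versus-stabilization comparison.
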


The boundary of a Weinstein Lefschetz fibration inherits a strongly Weinstein open book decomposition described precisely by an abstract presentation. Namely, for an abstract Weinstein Lefschetz fibration $(W_0; \mathcal{L})$, the fibration on the total space $|W_0; \mathcal{L}|$ induces the abstract open book $(W_0, \tau_{\mathcal{L}})$ of the boundary contact manifold $\partial |W_0; \mathcal{L}|$, where $\tau_{\mathcal{L}} = \tau_{L_N} \circ \cdots \circ \tau_{L_1}$ is the composition of Dehn twists along each vanishing cycle. Moreover, a positive stabilization of the boundary contact manifold is induced by a \textit{Lefschetz stabilization}: the attachment of a critical handle $h^{2n-2}_{n-1}$ to the regular fiber, together with a new vanishing cycle $L$, included at the front of the tuple of vanishing cycles $\mathcal{L}$, 
such that $L\cap h^{2n-2}_{n-1}$ is the core of the new handle. 

The other main result we will use is an existence theorem for strongly Weinstein supporting open book decompositions whose pages support a prescribed Legendrian submanifold. That this is possible in dimension $3$ was largely implicit in Giroux's description of contact cell decompositions in \cite{Giroux2002ICM}, though was first written down by Akbulut and Özbağci \cite{Akbulut2001Lefschetz} and Plamanevskaya \cite{Plamenevskaya2004Contact} --- see also the algorithm of Avdek \cite{avdek2013surgery}.

\begin{theorem}\cite{HondaHuang2019Convex}\label{thm:obd_exist_leg_page}
Let $\Lambda^{n-1} \subseteq (M^{2n-1}, \xi)$ be a closed Legendrian submanifold. Then $(M, \xi)$ admits a strongly Weinstein supporting open book decomposition $(B,\pi)$ with Weinstein page $W_0^{2n-2}$ such that $\Lambda \subseteq W_0$ embeds in a page as a regular Lagrangian.      
\end{theorem}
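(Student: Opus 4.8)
The plan is to build the open book directly from a contact cell decomposition of $(M,\xi)$ that is \emph{adapted} to $\Lambda$, in the sense that $\Lambda$ sits inside the isotropic $(n-1)$-skeleton; the page will then be a Weinstein thickening of this skeleton, and $\Lambda$ will appear as the zero section of a cotangent-bundle neighborhood inside the page. The existence of supporting open books via convex hypersurface theory is exactly the technology of \cite{HondaHuang2019Convex}, so the task is to run that machine \emph{relative} to $\Lambda$ and then read off regularity from the characterization in \cref{prop:regularity_characterization}.

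First I would fix a handle (or CW) decomposition of the closed $(n-1)$-manifold $\Lambda$ and extend it to a contact cell decomposition of $(M,\xi)$ in which $\Lambda$ appears as a subcomplex of the $(n-1)$-skeleton $K$. Concretely, one triangulates $M$ finely enough that $\Lambda$ is a subcomplex, then perturbs the skeleton to be isotropic using Legendrian realization and $C^0$-genericity for convex hypersurfaces; the Legendrian $\Lambda$ is already isotropic and can be kept fixed throughout this perturbation. The defining feature of a contact cell decomposition is that the boundary $\Sigma$ of a standard neighborhood $N(K)$ of the $(n-1)$-skeleton is a convex hypersurface whose dividing set cuts $\Sigma$ into positive and negative regions; this is where the input of \cite{HondaHuang2019Convex} is essential.

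Next I would take the page to be $W_0 := N(K)$, the Weinstein thickening of the skeleton. The complement $M \setminus \mathrm{int}(W_0)$ is built from neighborhoods of the top-dimensional cells and, by the contact-cell structure, fibers over $S^1$ with binding $B = \partial W_0 \cap \Sigma$; this produces the supporting open book $(B,\pi)$, and the page inherits a strongly Weinstein structure from the convex neighborhood $N(K)$. Because $\Lambda \subseteq K$ is a subcomplex consisting of isotropic cells, a neighborhood of $\Lambda$ inside $W_0$ is, by the Weinstein neighborhood theorem, symplectomorphic to $\D^*\Lambda$ with its standard Weinstein structure (\cref{lemma:cot_bund_str}), while the remaining cells of $K$ are attached as Weinstein handles along attaching regions that can be pushed off $\Lambda$. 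Thus $W_0 \cong \D^*\Lambda \cup W_\Lambda$ with $\Lambda$ the zero section, which is precisely characterization (3) of regularity in \cref{prop:regularity_characterization}; hence $\Lambda$ embeds in the page as a regular Lagrangian.

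The main obstacle is the convex-hypersurface-theoretic heart of the argument: arranging simultaneously that (i) the $(n-1)$-skeleton is isotropic with convex neighborhood boundary, (ii) the dividing set organizes the complement into a genuine fibration so that the pages are strongly Weinstein with a transverse Reeb field, and (iii) all of this is carried out relative to the fixed Legendrian $\Lambda$. Points (i)--(ii) are the substance of \cite{HondaHuang2019Convex}; the relative refinement (iii) requires keeping $\Lambda$ as a subcomplex under the genericity perturbations and checking that the complementary Weinstein handles can be isotoped off $\Lambda$ without disturbing the local $\D^*\Lambda$ model. I expect (iii) to be the step demanding the most care, since one must control the handle attachments near $\Lambda$ while preserving convexity of $\Sigma$.
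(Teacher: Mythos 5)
The paper does not prove this statement: it is imported wholesale from \cite{HondaHuang2019Convex}, with the three-dimensional history credited to Giroux, Akbulut--\"Ozba\u{g}ci, Plamenevskaya, and Avdek. So there is no in-paper argument to compare yours against, and the real question is whether your sketch is an honest reconstruction of the cited result together with the relative refinement the authors need.

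Your outline follows the expected strategy (contact cell decomposition with $\Lambda$ in the isotropic $(n-1)$-skeleton $K$, Weinstein ribbon $N(K)$ as the page, regularity of $\Lambda$ in the page via $\mathrm{characterization}$ (3) of \cref{prop:regularity_characterization}), and you correctly identify where the weight falls. But be aware that the sentence ``the complement $M\setminus\mathrm{int}(W_0)$ \dots fibers over $S^1$'' is not a formal consequence of the contact-cell structure once $n\geq 3$: in dimension $3$ the complement of the ribbon is a product neighborhood of a convex surface essentially for free, whereas in higher dimensions establishing that the complement of the Weinstein thickening of the skeleton is an $I$-invariant half of a convex decomposition \emph{is} the theorem of \cite{HondaHuang2019Convex}, proved via their $C^0$-convex-approximation machinery rather than read off from the cells. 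As written, your proposal therefore proves the relative statement by citing the absolute one plus a ``do everything relative to $\Lambda$'' clause; that is a legitimate way to use the reference (and matches how the paper uses it), but it is not a proof of the theorem itself. Two smaller points to tighten: (i) since $\Lambda$ is closed of dimension $n-1$, it is a union of top-dimensional cells of $K$, so other $(n-1)$-cells can meet it only along lower strata; you should say explicitly that the complementary handles are pushed to $\partial\,\D^*\Lambda$ by the Liouville flow of the cotangent model before invoking $W_0\cong \D^*\Lambda\cup W_\Lambda$; (ii) the genericity and Legendrian-realization perturbations must be checked to be performable relative to the fixed Legendrian, which you flag but do not carry out.
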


\section{Coupled Lefschetz handles}\label{section:coupled_lefschetz_handles}

A coupled Weinstein $\ell$-handle $(h_{\ell}^{2n}, h_{\ell}^n)$ modifies both the topology of the ambient Weinstein domain and the embedded Lagrangian of interest by attaching a handle of index $\ell$. This section describes how to modify the Weinstein structure on such a coupled handle to make it compatible with a Lefschetz fibration. Here we use the work of S. Lee \cite{Lee2021Lefschetz}, generalizing Johns \cite{Johns2011Lefschetz} in dimension $4$, in decomposing a given Weinstein handle into three parts to make it Lefschetz compatible.

\subsection{Coupled Lefschetz handlebodies}

We begin with the following observation regarding subcritical coupled Weinstein handles. 

\begin{lemma}\label{lemma:coupled_weinstein_lefschetz_handle_htpy}
Fix $0\leq \ell \leq n-1$. Let $(h_{\ell}^{2n}, h_\ell^n)$ be a coupled Weinstein $\ell$-handle. Then $(h_\ell^{2n}, h_\ell^n)$ is $1$-Weinstein homotopic to a sequence of coupled Weinstein handle attachments of the form 
\[
(h_\ell^{2n}, \emptyset) \cup (h_{n-1}^{2n}, \emptyset) \cup (h_{n}^{2n}, h_\ell^n)
\]
where $h_{n-1}^{2n}, h_{n}^{2n}$ are in canceling position. Moreover, we may require the $1$-Weinstein homotopy to be compactly supported in the interior of $h_\ell^{2n}$ and to be tangent to the Lagrangian handle $h_\ell^n$ throughout the homotopy.    
\end{lemma}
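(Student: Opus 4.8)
The plan is to obtain the three-handle presentation by creating a canceling critical pair in the interior of $h_{\ell}^{2n}$ and then sliding the Lagrangian handle into the new critical handle. Since the claim is compactly supported, I would fix the boundary, the attaching region, and the collar of $h_{\ell}^{n}$ near $\partial h_{\ell}^{2n}$, and modify the Weinstein data only in a ball in the interior; topologically the domain is unchanged because the new handles $h_{n-1}^{2n}, h_n^{2n}$ cancel. At the level of the ambient Weinstein structure the assertion is exactly that $h_{\ell}^{2n}$ is $1$-Weinstein homotopic to $h_{\ell}^{2n}\cup h_{n-1}^{2n}\cup h_n^{2n}$ with $h_{n-1}^{2n}, h_n^{2n}$ in canceling position; this is the reverse of Weinstein handle cancellation \cite{CieliebakEliashberg2012FromStein}, and is the three-piece decomposition of a Weinstein handle used by Johns \cite{Johns2011Lefschetz} and S. Lee \cite{Lee2021Lefschetz}. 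The creation of the pair is the birth of two nondegenerate zeros of $X_{\lambda}$ of consecutive indices $n-1$ and $n$ out of a birth-death singularity, which is precisely the kind of degeneration allowed in a $1$-Weinstein homotopy.

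The new content is the behavior of the Lagrangian. In the model, $h_{\ell}^{n}=\{x_1=\cdots=x_\ell=y_{\ell+1}=\cdots=y_n=0\}$ is tangent to $X_{\lambda}$ and carries the unique zero of $X_{\lambda}$ at the origin, so it is a coupled $(\ell,\ell)$-handle; in the target it must become the model Lagrangian $\ell$-handle $\D^{\ell}_{(y_1,\dots,y_\ell)}\times\D^{n-\ell}_{(x_{\ell+1},\dots,x_n)}$ inside the \emph{critical} handle $h_n^{2n}$, i.e. a coupled $(n,\ell)$-handle, with $h_{\ell}^{2n}$ and $h_{n-1}^{2n}$ receiving no Lagrangian handle. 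Thus the essential point is to raise the ambient index of the handle carrying $h_{\ell}^{n}$ from $\ell$ to $n$ while keeping the Lagrangian index fixed at $\ell$. I would do this by performing the birth along $h_{\ell}^{n}$, placing the new index-$n$ zero on $h_{\ell}^{n}$ and choosing the local family so that (i) every intermediate Liouville field leaves $h_{\ell}^{n}$ invariant, equivalently $\lambda_t\mid_{h_{\ell}^{n}}\equiv 0$ for all $t$, so that $h_{\ell}^{n}$ remains an exact Lagrangian tangent to the flow throughout; and (ii) at the end of the homotopy $h_{\ell}^{n}$ threads into $h_n^{2n}$ exactly as the coordinate Lagrangian $\ell$-handle above. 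Concretely this drags the skeletal point of $h_{\ell}^{n}$ off the ambient index-$\ell$ zero and onto the ambient index-$n$ zero, so that $h_{\ell}^{n}$ is realized by a single Lagrangian $\ell$-handle in $h_n^{2n}$ and passes through $h_{\ell}^{2n}, h_{n-1}^{2n}$ only as a boundary-parallel collar, exhibiting them as trivial coupled handles.

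The main obstacle is step (ii), because the ambient index of the carrying handle genuinely increases: a neighborhood of $h_{\ell}^{n}$ cannot simply be identified with its standard cotangent model (as in \cref{lemma:cot_bund_str}), since that model preserves the index and would only reproduce the coupled $(\ell,\ell)$-handle we started with. I would instead build the birth so that the attaching sphere of $h_n^{2n}$ meets the belt sphere of $h_{n-1}^{2n}$ transversally once (the canceling condition) and so that, in the directions normal to $h_{\ell}^{n}$, the modified Liouville field acquires exactly the $\ell$ expanding directions $x_1,\dots,x_\ell$ and $n-\ell$ contracting directions $y_{\ell+1},\dots,y_n$ of the model $(n,\ell)$-handle; maintaining the tangency of (i) throughout then pins down the resulting coupled handle as the model $(h_n^{2n}, h_{\ell}^{n})$ on the nose. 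The delicate bookkeeping — simultaneously realizing the ambient canceling pair, the index change in the normal directions, and the invariance $\lambda_t\mid_{h_{\ell}^{n}}\equiv 0$ — is what the explicit local models of Johns and Lee are designed to make tractable, and I expect to import those models directly, adding only the Lagrangian-tangency constraint.
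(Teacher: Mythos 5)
Your overall strategy --- birth a canceling pair of ambient zeros of indices $n-1$ and $n$ in the interior, place the new index-$n$ zero on $h_\ell^n$, and keep $\lambda_t\mid_{h_\ell^n}\equiv 0$ throughout --- is the right one, and it is what the paper's proof ultimately accomplishes. But your step (ii) is exactly where the content of the lemma lives, and you have not supplied an argument for it: you correctly observe that the ambient index of the handle carrying $h_\ell^n$ must jump from $\ell$ to $n$ while the Lagrangian index stays at $\ell$, and that the cotangent-model identification of \cref{lemma:cot_bund_str} cannot do this, but you then defer the construction to ``the explicit local models of Johns and Lee, adding only the Lagrangian-tangency constraint.'' Those models decompose the \emph{ambient} Weinstein handle into three pieces; they carry no Lagrangian, so the tangency constraint and the identification of the resulting coupled $(n,\ell)$-handle with the coordinate model are precisely the parts that still need to be proven. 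As written, the crux is asserted rather than established.

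The paper closes this gap with a short device you missed: the product decomposition
\[
\left(h_\ell^{2n}, h_\ell^n\right) \;=\; \left(h_\ell^{2\ell},h_\ell^\ell\right) \times \left(h_0^{2(n-\ell)}, h_0^{n-\ell}\right),
\]
which splits off the ``core'' directions $(y_1,\dots,y_\ell,x_1,\dots,x_\ell)$, where the Lagrangian is the full core of $h_\ell^{2\ell}$, from the complementary factor, where the Lagrangian is a standard Lagrangian disk $h_0^{n-\ell}$ inside a Darboux ball $h_0^{2(n-\ell)}$. The entire $1$-homotopy is then performed only in the second factor, where the statement is exactly the known case of \cite[Proposition 2.3]{EliashbergGanatraLazarev2018Regular}: a birth of a canceling $(n-\ell-1,\,n-\ell)$ pair, tangent to the disk and compactly supported, making $h_0^{n-\ell}$ the co-core of the new top-index handle. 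Multiplying back by the untouched factor $(h_\ell^{2\ell},h_\ell^\ell)$ automatically performs all of the index bookkeeping you flagged as delicate: it shifts the birthed pair to indices $(n-1,n)$, raises the ambient index of the handle carrying the Lagrangian to $n$ while keeping the Lagrangian index at $\ell$, and identifies the result with the model coupled $(n,\ell)$-handle on the nose, since $h_\ell^\ell\times h_0^{n-\ell}\subset h_\ell^{2\ell}\times h_{n-\ell}^{2(n-\ell)}=h_n^{2n}$ is the coordinate Lagrangian $\ell$-handle. I recommend you replace your direct local construction with this reduction; alternatively, if you insist on the direct route, you must actually exhibit the $1$-parameter family of Liouville forms realizing (i) and (ii), which amounts to reproving the EGL proposition in product form.
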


\begin{proof}
A coupled Weinstein $\ell$-handle $(h_\ell^{2n}, h_\ell^n)$ admits a decomposition
\begin{equation}\label{eq:decomp1}
    \left(h_\ell^{2n}, h_\ell^n\right) = \left(h_\ell^{2\ell} \times h_0^{2(n-\ell)},\, h_\ell^\ell \times h_0^{n-\ell}\right) = \left(h_\ell^{2\ell},h_\ell^\ell\right) \times \left(h_0^{2(n-\ell)}, h_0^{n-\ell}\right).
\end{equation}
See the first row of \cref{fig:lefschetz_handle} for the case $n=2$ and $\ell=1$. 

\begin{figure}[ht]
	\begin{overpic}[scale=.34]{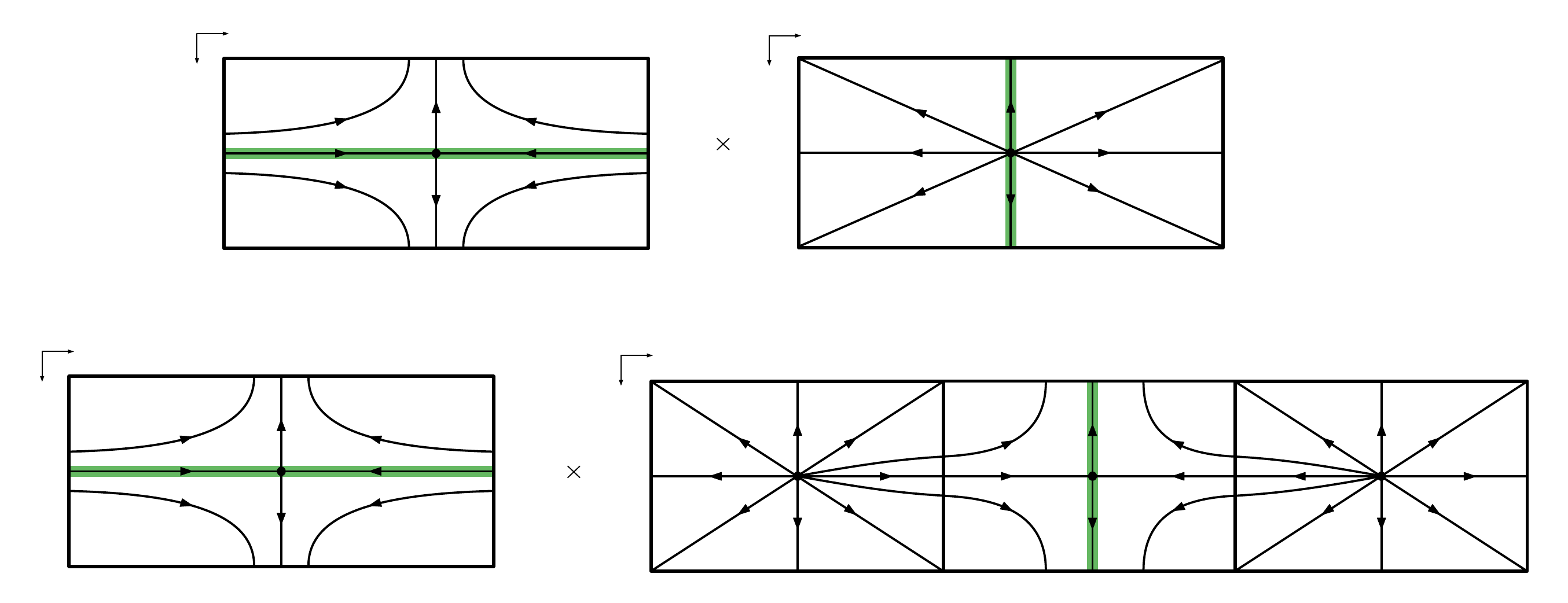}   
        \put(2,12.75){\tiny $x_1$}
        \put(5.5,15.75){\tiny $y_1$}
        \put(39,12.5){\tiny $x_2$}
        \put(42.25,15.5){\tiny $y_2$}

        \put(15,0){\tiny $(h_1^2, \textcolor{darkgreen}{h_1^1})$}
        \put(48,0){\tiny $(h_0^2, \emptyset)$}
        \put(59.5,0){\tiny $\cup$}
        \put(66.5,0){\tiny $(h_1^2, \textcolor{darkgreen}{h_0^1})$}
        \put(78.5,0){\tiny $\cup$}
        \put(86,0){\tiny $(h_0^2, \emptyset)$}

        \put(11.75,32.75){\tiny $x_1$}
        \put(15,36){\tiny $y_1$}
        \put(48.5,32.75){\tiny $x_2$}
        \put(51.75,36){\tiny $y_2$}

        \put(25,20.25){\tiny $(h_1^2, \textcolor{darkgreen}{h_1^1})$}
        \put(61,20.25){\tiny $(h_0^2, \textcolor{darkgreen}{h_0^1})$}
        
	\end{overpic}
	\caption{The first row depicts a $4$-dimensional coupled Weinstein $1$-handle, and the second row depicts, in the language of \cref{def:coupled_lefschetz_handle}, a $4$-dimensional coupled Lefschetz $1$-handle with the decomposition described by the proof of \cref{lemma:coupled_weinstein_lefschetz_handle_htpy}.}
	\label{fig:lefschetz_handle}
\end{figure}

By the proof of \cite[Proposition 2.3]{EliashbergGanatraLazarev2018Regular}, we may perform a $1$-Weinstein homotopy on the pair $(h_0^{2(n-\ell)}, h_0^{n-\ell})$ that births a canceling pair critical points of index $n-\ell-1$ and $n-\ell$ so that the Lagrangian disk $h_0^{n-\ell}$ becomes the unstable manifold of the point of index $n-\ell$. Furthermore, this homotopy can be taken to be everywhere tangent to $h_0^{n-\ell}$ and compactly supported in the interior of $h_0^{2(n-\ell)}$. This yields a coupled Weinstein handle decomposition 
\begin{equation}\label{eq:decomp2}
\left(h_0^{2(n-\ell)}, h_0^{n-\ell}\right) = \left(h_0^{2(n-\ell)}, \emptyset\right) \cup \left(h_{n-\ell-1}^{2(n-\ell)}, \emptyset\right) \cup \left(h_{n-\ell}^{2(n-\ell)}, h_0^{n-\ell}\right).
\end{equation}
For example, see the second term in the product in the second row of \cref{fig:lefschetz_handle}. Combining \eqref{eq:decomp1} and \eqref{eq:decomp2}, we obtain a decomposition (up to $1$-Weinstein homotopy) of the original coupled Weinstein $\ell$-handle as 
\begin{align*}
    \left(h_\ell^{2n}, h_\ell^n\right) &= \left(h_\ell^{2\ell},h_\ell^\ell\right) \times \left(h_0^{2(n-\ell)}, h_0^{n-\ell}\right) \\
    &= \left(h_\ell^{2\ell} \times h_0^{2(n-\ell)},\,\emptyset\right) \cup \left(h_\ell^{2\ell}\times h_{n-\ell-1}^{2(n-\ell)},\, \emptyset\right) \cup \left(h_\ell^{2\ell} \times h_{n-\ell}^{2(n-\ell)}, \, h_\ell^\ell\times h_0^{n-\ell}\right) \\
    &= \left(h_\ell^{2n}, \emptyset\right) \cup \left(h_{n-1}^{2n},\emptyset\right) \cup \left(h_n^{2n}, h_\ell^n\right).
\end{align*}
\end{proof}

\cref{lemma:coupled_weinstein_lefschetz_handle_htpy} justifies the following definition. 

\begin{definition}\label{def:coupled_lefschetz_handle}
Fix $0\leq \ell \leq k \leq n$. 
\begin{enumerate}
    \item When $0\leq \ell \leq n-1$, a $2n$-dimensional \emph{coupled Lefschetz $\ell$-handle} is a sequence of three coupled Weinstein handle attachments 
\[
(h_\ell^{2n}, \emptyset) \cup (h_{n-1}^{2n}, \emptyset) \cup (h_{n}^{2n}, h_\ell^n)
\]
which is $1$-Weinstein homotopic (in particular, diffeomorphic) to a coupled Weinstein $\ell$-handle $(h_\ell^{2n}, h_\ell^n)$ through a homotopy everywhere tangent to the Lagrangian handle $h_\ell^n$ and compactly supported in the interior of $h_\ell^{2n}$. 

\item When $\ell=n$, a $2n$-dimensional \emph{coupled Lefschetz $n$-handle} is simply a coupled Weinstein $n$-handle $(h_{n}^{2n}, h_n^n)$.

\item When $0\leq k \leq n$, a \emph{trivial coupled Lefschetz $k$-handle} is the trivial coupled Weinstein $k$-handle $(h_k^{2n}, \emptyset)$, i.e. just an ambient Weinstein handle.
\end{enumerate}
For $\ell \leq n-1$ we will use the notation $(\mathfrak{h}_\ell^{2n}, h_{\ell}^n)$ to refer to the full data of a coupled Lefschetz $\ell$-handle. In particular, $\mathfrak{h}_\ell^{2n} = h_\ell^{2n} \cup h_{n-1}^{2n} \cup h_n^{2n}$ when $0\leq \ell\leq n-1$. When convenient for notational consistency, we will also let $\mathfrak{h}_n^{2n} = h_n^{2n}$ in the critical case.
\end{definition}

\begin{definition}
A \emph{strict Lefschetz coupled handlebody} is a pair $(W^{2n},L^n)$ where $L\subseteq (W, \lambda, \phi)$ is a Lagrangian in a Weinstein domain built out of a single non-trivial coupled Lefschetz $0$-handle followed by a sequence of non-trivial coupled Lefschetz handle attachments. In a \emph{(non-strict) Lefschetz coupled handlebody}, we further allow trivial coupled Lefschetz handle attachments, i.e. additional ambient Weinstein handles. Finally, we say that a given pair $(W,L)$ admits a \emph{(strict) coupled Lefschetz handle decomposition} if there is a $1$-Weinstein homotopy of $W$ along which $L$ remains Lagrangian such that $(W,L)$ is a (strict) coupled Lefschetz handlebody. 
\end{definition}

\begin{remark}
A strict Lefschetz coupled handlebody is $1$-Weinstein homotopic to $(\D^*L, L)$, where $L$ is the manifold built from the underlying Lagrangian handles in the decomposition.     
\end{remark}

The following proposition characterizes regularity of Lagrangians in terms of the language introduced in this section. 

\begin{proposition}\label{prop:regular_coupled_lefschetz_handlebody}
Let $L^n\subseteq (W^{2n}, \lambda, \phi)$ be a properly embedded regular Lagrangian submanifold of a Weinstein domain. Then for any choice of handle decomposition on $L$, the pair $(W,L)$ admits a coupled Lefschetz handle decomposition extending the prescribed decomposition of $L$. 
\end{proposition}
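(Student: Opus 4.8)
The plan is to combine the regularity characterization of \cref{prop:regularity_characterization} with the handle-by-handle conversion of \cref{lemma:coupled_weinstein_lefschetz_handle_htpy}. In outline: first I would produce a coupled Weinstein handle decomposition of $(W,L)$ whose Lagrangian handles realize the \emph{prescribed} handle decomposition of $L$, and then replace each subcritical coupled Weinstein handle by a coupled Lefschetz handle using \cref{lemma:coupled_weinstein_lefschetz_handle_htpy}.

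For the first step, fix a Morse function $\phi_L : L \to \R$ inducing the prescribed handle decomposition of $L$. Since $L$ is regular, condition (3) of \cref{prop:regularity_characterization} gives a $1$-Weinstein homotopy after which $W \cong \D^*L \cup W_L$, with $\D^*L$ carrying the standard Weinstein structure and $W_L$ a Weinstein cobordism. By \cref{lemma:cot_bund_str}, the standard Weinstein structure on $\D^*L$ may be built from a gradient-like vector field for $\phi_L$, and any two such choices are Weinstein homotopic; I would therefore take the one associated to $\phi_L$. Each index-$\ell$ critical point of $\phi_L$ then contributes a coupled Weinstein $\ell$-handle $(h_\ell^{2n}, h_\ell^n)$ whose Lagrangian handle is the corresponding $\ell$-handle of $L$, so that the Lagrangian handles assemble into exactly the prescribed decomposition of $L$. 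The remaining handles, which build $W_L$, carry no Lagrangian part and are thus trivial coupled handles (ambient Weinstein handles). This realizes condition (4) of \cref{prop:regularity_characterization} in a form extending the chosen decomposition of $L$.

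For the second step, I would process the coupled Weinstein handles one at a time. Every subcritical coupled Weinstein handle $(h_\ell^{2n}, h_\ell^n)$ with $0 \le \ell \le n-1$ is, by \cref{lemma:coupled_weinstein_lefschetz_handle_htpy}, $1$-Weinstein homotopic to the three-handle sequence $(h_\ell^{2n}, \emptyset) \cup (h_{n-1}^{2n}, \emptyset) \cup (h_n^{2n}, h_\ell^n)$ defining a coupled Lefschetz $\ell$-handle, through a homotopy that is tangent to $h_\ell^n$ and compactly supported in the interior of the handle. A coupled Weinstein $n$-handle is already a coupled Lefschetz $n$-handle by definition, and the trivial coupled handles need no modification. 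Concatenating these finitely many homotopies yields a single $1$-Weinstein homotopy of $W$ along which $L$ remains Lagrangian, presenting $(W,L)$ as a (non-strict) Lefschetz coupled handlebody extending the prescribed decomposition of $L$.

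The step I expect to require the most care is ensuring that the local conversions of the second step assemble into a global $1$-Weinstein homotopy without disturbing the attaching data of later handles or introducing index-ordering obstructions. Replacing a single $\ell$-handle by the triple $h_\ell^{2n} \cup h_{n-1}^{2n} \cup h_n^{2n}$ introduces handles of index $n-1$ and $n$, which naively could conflict with lower-index handles attached afterward. The resolution is exactly the compact-support clause of \cref{lemma:coupled_weinstein_lefschetz_handle_htpy}: because the homotopy is supported in the interior of $h_\ell^{2n}$, each coupled Lefschetz handle occupies the same footprint as the original coupled Weinstein handle and presents the same attaching and belt regions to the rest of $W$. Consequently the external handle structure, and in particular the attachment of every subsequent handle, is unchanged, and the homotopies for distinct handles have disjoint supports and may be performed simultaneously.
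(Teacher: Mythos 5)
Your proposal is correct and follows essentially the same route as the paper: use characterization (3) of \cref{prop:regularity_characterization} to write $W = \D^*L \cup W_L$, invoke \cref{lemma:cot_bund_str} to realize the prescribed handle decomposition of $L$ by coupled Weinstein handles in $\D^*L$, convert each one to a coupled Lefschetz handle via \cref{lemma:coupled_weinstein_lefschetz_handle_htpy}, and treat the handles of $W_L$ as trivial coupled Lefschetz handles. Your closing remark on why the compactly supported local homotopies assemble globally is a point the paper leaves implicit, but it is resolved exactly as you say.
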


\begin{proof}
By \cref{prop:regularity_characterization}, we may assume after a $1$-Weinstein homotopy of $(\lambda, \phi)$ that $W = \D^*L \cup W_L$ where $\D^*L$ is the standard Weinstein structure on the cotangent bundle, $W_L$ is a complementary Weinstein cobordism attached to $\partial \D^*L$, and $L\subseteq W$ is the $0$-section of the cotangent bundle. 

Choose a Morse function on $L$ along with a gradient-like vector field, which specifies a smooth handle decomposition of $L$. By \cref{lemma:cot_bund_str}, we may take the Weinstein structure on $\D^*L$ to be one which restricts to this Morse structure on $L$ as the $0$-section. In particular, each smooth $\ell$-handle $h_{\ell}^n$ in the decomposition of $L$ then has a neighborhood given by $\D^*h_{\ell}^n$ with the Weinstein structure of an index-$\ell$ Weinstein handle $h^{2n}_{\ell}$. That is, each $\ell$-handle $h_{\ell}^n$ in $L$ has a neighborhood in $\D^*L$ given by a coupled Weinstein $\ell$-handle $(h_\ell^{2n}, h_\ell^n)$. By \cref{lemma:coupled_weinstein_lefschetz_handle_htpy}, we may perform a further $1$-Weinstein homotopy on each coupled Weinstein handle to witness it as a coupled Lefschetz handle. This presents $(\D^*L, L)$ as a strict coupled Lefschetz handlebody.

The complementary cobordism $W_L$ is attached via a sequence of Weinstein handles, which are all trivial coupled Lefschetz handles. Thus, attaching these handles presents $(W=\D^*L \cup W_L, L)$ as a coupled Lefschetz handlebody. 
\end{proof}

\section{Proof of main theorem}\label{sec:proof}

\begin{definition}\label{def:arc_adm}
An \emph{arc-admissible triple} is the data $(W, L, f)$ where $(W^{2n}, \lambda, \phi)$ is a Weinstein domain, $L^n\subseteq W$ is a properly embedded Lagrangian submanifold, and $f:L \to [0,1]$ is an efficient Morse function; see \cref{def:efficient}. 
\end{definition}

\begin{definition}
Let $(W,L,f)$ be an arc-admissible triple. We say that a Weinstein Lefschetz fibration $p:(W,\lambda, \phi) \to \D^2$ is \emph{admissible for $(W,L,f)$}, or simply \emph{$p:(W,L,f)\to \D^2$ is admissible}, if the following properties hold: 
\begin{enumerate}
    \item The image $p(L)\subseteq \D^2$ is a smoothly embedded arc admitting a parametrization $\gamma:[0,1] \to p(L)$ such that $\gamma^{-1}\circ p:L \to [0,1]$ is a Morse function $0$-homotopic to $f:L \to [0,1]$. 
    \item For any regular value $z\in p(L) \subseteq \D^2$ of $p$, the intersection $L_0^{n-1}:= L\cap p^{-1}(z)$ is a regular Lagrangian in the corresponding regular fiber $p^{-1}(z) \cong W_0^{2n-2}$ of the fibration.
\end{enumerate}

We will also say that $L$ is an \emph{arc-admissible} Lagrangian if such a $p$ exists.
\end{definition}

With the language established above, we may rephrase \cref{thm:main} as follows. 

\begin{theorem}\label{thm:main_alt}
Let $(W,L,f)$ be an arc-admissible triple with $L$ a regular Lagrangian. Then $(W,L,f)$ admits an admissible Weinstein Lefschetz fibration.     
\end{theorem}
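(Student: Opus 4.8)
The plan is to read the desired Lefschetz fibration directly off of a coupled Lefschetz handle decomposition of the pair $(W,L)$ that is adapted to $f$, arranging the Lagrangian handles so that their images concatenate into the arc $p(L)$. Efficiency of $f$ is what pins down the endpoints of the arc: the unique index-$0$ handle anchors one end at an interior critical value, and the absence of top-index handles (when $\partial L\neq\emptyset$) forces the other end onto $\partial\D^2$, while a unique top-index handle (when $L$ is closed) would instead terminate the arc at a second interior critical value. I would first apply \cref{prop:regular_coupled_lefschetz_handlebody}, using the handle decomposition of $L$ induced by $f$. Since $L$ is regular, this produces a $1$-Weinstein homotopy after which $(W,L)$ is a coupled Lefschetz handlebody: each index-$\ell$ handle of $L$ is replaced by a coupled Lefschetz handle $(\mathfrak{h}_\ell^{2n},h_\ell^n)$ presenting $\D^*L$ as a strict coupled Lefschetz handlebody, and the complementary cobordism $W_L$ is attached by trivial coupled Lefschetz handles. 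This immediately secures admissibility condition (1), since by \cref{def:coupled_lefschetz_handle} each coupled Lefschetz handle is $1$-Weinstein homotopic to an ordinary Weinstein handle, so the total space of any fibration assembled from them is $1$-Weinstein homotopic to $(W,\lambda,\phi)$.

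Next I would assemble the fibration inductively over the handlebody in order of increasing index, following the Johns--Lee construction packaged in \cref{section:coupled_lefschetz_handles}. A single coupled Lefschetz $\ell$-handle $(h_\ell^{2n},\emptyset)\cup(h_{n-1}^{2n},\emptyset)\cup(h_n^{2n},h_\ell^n)$ contributes its two subcritical pieces to the regular fiber $W_0$ and its critical piece $(h_n^{2n},h_\ell^n)$ as a new Lefschetz critical point whose vanishing cycle is the attaching sphere in $W_0$; the Lagrangian handle $h_\ell^n$ is realized inside the corresponding Lefschetz thimble and therefore projects to a sub-arc emanating from the new critical value. Trivial coupled Lefschetz handles from $W_L$ are incorporated by the standard recipe --- subcritical handles enlarge $W_0$, critical handles add further vanishing cycles --- and, being disjoint from $L$, do not disturb the developing arc.

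To finish I would verify the two remaining admissibility conditions. For the base statement, I would check that the sub-arcs produced by the successive Lagrangian handles join into a single smoothly embedded $\gamma:[0,1]\to p(L)$ and that $\gamma^{-1}\circ p|_L$ reproduces $f$ up to $0$-homotopy; this is governed by the index-ordering of the handles together with the thimble structure, with efficiency of $f$ fixing the endpoint behavior. For the fiberwise statement, each regular slice $L\cap p^{-1}(z)$ inherits a coupled Weinstein handle decomposition in $W_0$ from the product structure of the thimbles, so it is a regular Lagrangian in the fiber by \cref{prop:regularity_characterization}.

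The hard part will be the global coherence of the inductive assembly: guaranteeing that the local thimble models for consecutive handles glue into one honest Weinstein Lefschetz fibration in the sense of \cref{def:WLF}, so that the image of $L$ is a genuinely embedded arc rather than a broken or self-intersecting path, the fiber Weinstein structure and the monodromy (open book) data remain mutually consistent across every handle attachment, and the assembled total space is the intended domain. Reconciling the open books produced at successive stages --- and carrying out the stabilizations needed to align them --- is precisely where the Giroux-correspondence input of \cref{thm:GC} and the convex-hypersurface existence result \cref{thm:obd_exist_leg_page}, which supplies supporting open books with prescribed Legendrians on their pages as regular Lagrangians, are expected to do the decisive work.
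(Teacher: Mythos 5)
Your proposal follows essentially the same route as the paper: apply \cref{prop:regular_coupled_lefschetz_handlebody} to the handle decomposition of $L$ induced by $f$, then build the fibration handle by handle, reading the arc image and the fiberwise regularity of $L\cap p^{-1}(z)$ off the thimble structure of the critical piece $(h_n^{2n},h_\ell^n)$, and invoking \cref{thm:obd_exist_leg_page} together with the Giroux correspondence (\cref{thm:GC}) to attach the critical handles of $W_L$ in a Lefschetz-compatible way away from $L$. The ``hard part'' you flag at the end is precisely what the paper's \cref{lemma:trivial_handle}, \cref{lemma:arc_image_lemma}, and \cref{lemma:k_handle} carry out (the last via an explicit conormal-bundle computation in the local model), so your outline matches the paper's argument step for step.
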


\begin{figure}[ht]
	\begin{overpic}[scale=.3]{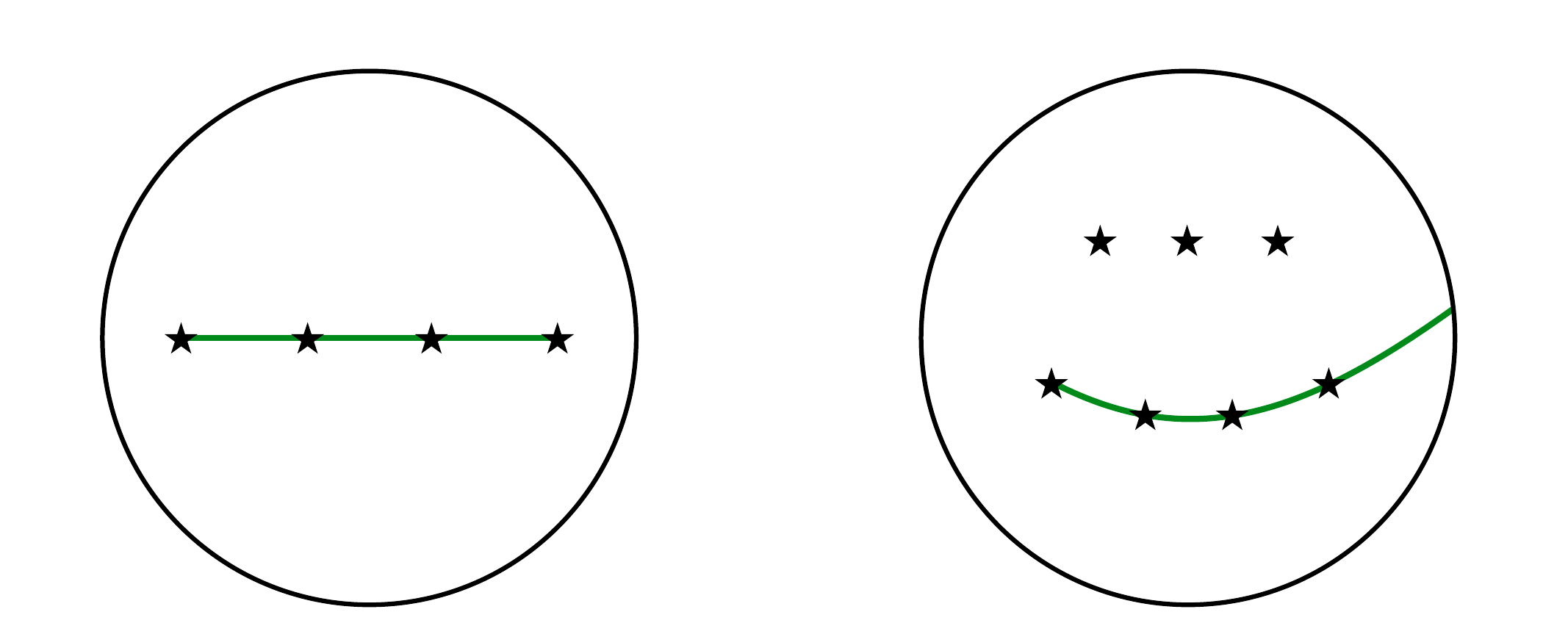}   
        \put(7.5,37.5){\small $(W_1,L_1) = (\D^*L_1, L_1)$}
        \put(20,22){\small \textcolor{darkgreen}{$p_1(L_1)$}}

        \put(56,37.5){\small $(W_2,L_2) = (\D^*L_2 \cup W_{L_2}, L_2)$}
        \put(71.5,18.5){\small \textcolor{darkgreen}{$p_2(L_2)$}}
	\end{overpic}
	\caption{The statement of \cref{thm:main_alt} for two different coupled Lefschetz handlebodies. In both figures, the disk is the base of the Weinstein Lefschetz fibration $p_i:W_i \to \D^2$ and stars are critical values. On the left, $L_1$ is a closed manifold (for instance, diffeomorphic to $T^2$) and $(W_1,L_1)$ is a strict coupled Lefschetz handlebody. On the right, $L_2$ is a manifold with boundary and $(W_2,L_2)$ is a (non-strict) coupled Lefschetz handlebody.}
	\label{fig:coupled_fibration}
\end{figure}

To prove \cref{thm:main_alt}, we will inductively show that every coupled Lefschetz handlebody $(W,L)$ admits an admissible Weinstein Lefschetz fibration. Admissibility refers only to a prescribed Morse function on $L$, and not a handle decomposition (which requires the extra data of a gradient-like vector field), but \cref{prop:regular_coupled_lefschetz_handlebody} --- which relies on the flexibility of \cref{lemma:cot_bund_str} --- allows us to prescribe any handle decomposition compatible with the Morse function on $L$ and extend this to a coupled Lefschetz handle decomposition on $(W,L)$.

The base case is that of a coupled Lefschetz $0$-handle.

\begin{lemma}[Coupled $0$-handles]\label{lemma:0handles}
The triple $(\mathfrak{h}_0^{2n}, h_0^n, f)$ admits an admissible Weinstein Lefschetz fibration, where $(\mathfrak{h}_0^{2n}, h_0^n)$ is a coupled Lefschetz $0$-handle and $f:h_0^n \to [0,1]$ is a Morse function with one critical point of index $0$.      
\end{lemma}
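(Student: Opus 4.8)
The plan is to recognize $(\mathfrak{h}_0^{2n}, h_0^n)$ as the standard $A_1$ Lefschetz fibration on a ball and to exhibit $h_0^n$ as an explicit Lagrangian disk through the unique critical point whose image is a radial arc. Concretely, I would take the abstract Weinstein Lefschetz fibration $(\D^* S^{n-1}; (S^{n-1}))$ of \cref{def:AWLF} whose single vanishing cycle is the $0$-section of the fiber $W_0 = \D^* S^{n-1}$. Its total space is modeled near the critical point by $p:\D^{2n} \to \D^2$, $p(z) = \sum_{j=1}^n z_j^2$, with the Liouville form $\lambda = i\sum_j (z_j\, d\overline z_j - \overline z_j\, dz_j)$ of \cref{def:WLF}, a single Lefschetz critical point at $0$, and critical value $0 \in \D^2$. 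To see that this total space is $1$-Weinstein homotopic to $\mathfrak{h}_0^{2n}$, I would compare handle decompositions: since $\D^* S^{n-1} = h_0^{2n-2} \cup h_{n-1}^{2n-2}$ and crossing with $\D^2$ preserves indices, one has $\D^* S^{n-1}\times \D^2 = h_0^{2n} \cup h_{n-1}^{2n}$, and adjoining the critical handle $h_n^{2n}$ along the Legendrian lift of the vanishing cycle yields $h_0^{2n} \cup h_{n-1}^{2n} \cup h_n^{2n}$ with the last two handles in canceling position. This is exactly the handle pattern of a coupled Lefschetz $0$-handle from \cref{def:coupled_lefschetz_handle}.

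Next I would identify the Lagrangian. By \cref{def:coupled_lefschetz_handle}, $h_0^n$ is the co-core of the critical handle $h_n^{2n}$; in the holomorphic coordinates above it is the Lagrangian $n$-disk through the origin transverse to the Lefschetz thimble, which I take to be the ``imaginary'' disk $L = \{z = iy : y\in\R^n,\ |y|\leq 1\}$ (the thimble itself being the real disk $\{z = x : x \in \R^n,\ |x|\leq 1\}$). One checks directly that $\lambda|_L \equiv 0$, that $L$ meets the thimble transversally in the single point $0$, and that $L$ is regular, being the co-core of a critical handle. I would then compute $p(L) = \{-|y|^2 : |y|\leq 1\} = [-1,0]$, a smoothly embedded arc running from the critical value $0$ to a point of $\partial\D^2$ (after rescaling the base). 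Parametrizing this arc \emph{from the critical value outward} by $\gamma(s) = -s$, the composite $\gamma^{-1}\circ p|_L$ sends $iy \mapsto |y|^2$, i.e. the radial Morse function on $D^n$ with a single index-$0$ critical point at the origin. Since any two Morse functions on $D^n$ with a unique index-$0$ critical point and no others are $0$-homotopic, this is $0$-homotopic to $f$, giving condition (1) of admissibility.

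For condition (2), I would check that the fiberwise slices are regular. For a regular value $z = \gamma(s) = -s$ with $s\in(0,1)$, the slice $L\cap p^{-1}(z) = \{iy : |y| = \sqrt{s}\}$ is precisely the $0$-section of the fiber $p^{-1}(z) \cong \D^* S^{n-1}$ --- namely the vanishing cycle --- since the locus $\{\sum_j z_j^2 = -s\}$ has its minimal sphere along $\{\,\mathrm{Re}\,z = 0\,\}$. The $0$-section of a cotangent bundle is regular, so condition (2) holds.

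The two steps requiring genuine care are both in the middle paragraph. First is the bookkeeping of core versus co-core together with the orientation of the arc: it is essential to realize $h_0^n$ as the co-core and to traverse the arc outward from the critical value, so that the induced Morse function attains a \emph{minimum} (index $0$) rather than a maximum (index $n$) at the Lefschetz critical point. Second is the $1$-Weinstein homotopy matching the global Weinstein structure on the total space of $(\D^* S^{n-1}; (S^{n-1}))$ --- whose critical handle genuinely carries the index-$n$ critical point, not captured by the purely radial local Liouville field --- with the holomorphic local model used for the projection; this follows from the handle count above together with the flexibility already exploited in \cref{lemma:coupled_weinstein_lefschetz_handle_htpy}, keeping the homotopy tangent to $L$ throughout.
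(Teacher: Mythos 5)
Your proposal is correct and follows essentially the same route as the paper's proof: a trivial fibration on the subcritical part with regular fiber $\D^*S^{n-1}$, a single vanishing cycle along the $0$-section whose thimble is the core of $h_n^{2n}$, the Lagrangian $h_0^n$ realized as the co-core (the ``dual'' thimble) projecting to a radial arc inducing the index-$0$ Morse function, and regularity of the slices as $0$-sections of the fiber. Your version simply makes the local holomorphic computation ($\lambda|_L\equiv 0$, $p(L)=[-1,0]$, the imaginary spheres as fiberwise slices) explicit where the paper argues more softly.
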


\begin{proof}
This is a consequence of \cite[Proposition 2.3]{EliashbergGanatraLazarev2018Regular}; see also (2) of \cref{example:coupled_handles}. For the sake of completeness we describe the fibration. A coupled Lefschetz $0$-handle is a sequence of three coupled Weinstein handles:
\[
(\mathfrak{h}_0^{2n}, h_0^n) = (h_0^{2n}, \emptyset) \cup (h_{n-1}^{2n}, \emptyset) \cup (h_n^{2n}, h_0^{n})
\]
where the Lagrangian $0$-handle $h_0^{n}$ is the co-core of the critical Weinstein handle $h_n^{2n}$. The subcritical Weinstein handles $h_0^{2n} \cup h_{n-1}^{2n}$ admit the trivial Weinstein Lefschetz fibration with no critical points and regular fiber
\[
W_0^{2n-2}:= h_0^{2n-2} \cup h_{n-1}^{2n-2} = \D^*S^{n-1}.
\]
The attaching sphere of the critical Weinstein handle $h_n^{2n}$ is a Legendrian lift of the Lagrangian $0$-section of a regular fiber. Thus, we attach $h_n^{2n}$ and extend the Weinstein Lefschetz fibration to $p:\mathfrak{h}_0^{2n} \to \D^2$ by attaching a Lefschetz thimble along the $0$-section, making the latter a vanishing cycle. The Lagrangian co-core $h_0^{n}$ of $h_n^{2n}$ is then the "dual" Lefschetz thimble; see the left side of \cref{fig:handle_fibration}. This gives $p(h_0^n)$ as a smoothly embedded arc in the base and moreover the fibration induces the Morse function $f:h_0^n\to [0,1]$, up to $0$-homotopy. 

Finally, observe that if $z\in p(h_0^{n})$ is regular then $h_0^n \cap p^{-1}(z)$ is the $0$-section of $W_0 \cong \D^*S^{n-1}$ and hence is regular. In particular, the open book decomposition of $\partial \mathfrak{h}_0^{2n} = (S^{2n-1}, \xi_{\mathrm{st}})$ induced by the fibration is $(\D^*S^{n-1}, \tau)$, where $\tau$ is the positive Dehn twist around the $0$-section, and the boundary of the Lagrangian submanifold $\partial h_0^n$ is the $0$-section of one of the pages.    
\end{proof}

\begin{figure}[ht]
	\begin{overpic}[scale=.63]{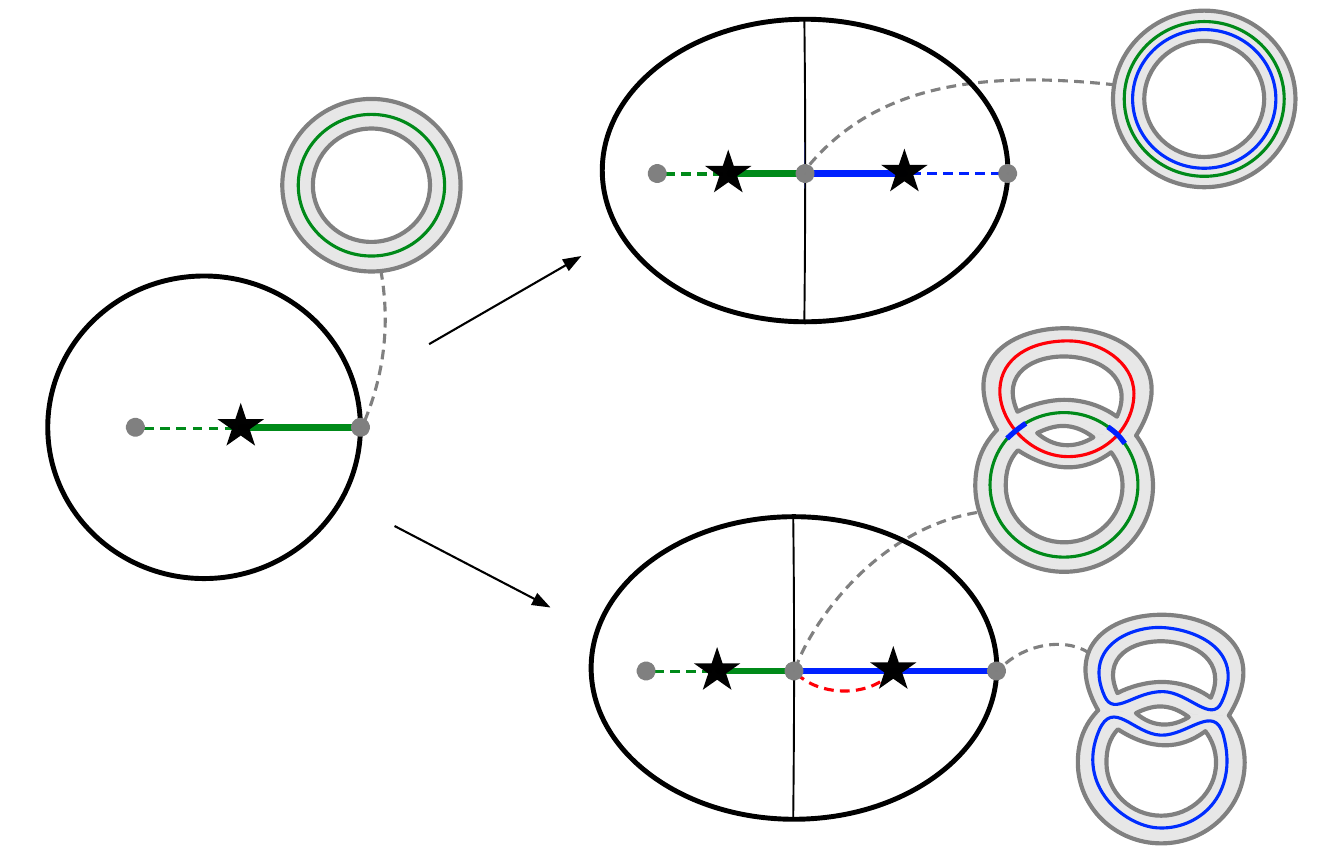}   
       \put(11.5,17){\small $(\mathfrak{h}_0^4, h_0^2)$}
       \put(51.5,37){\small $(\mathfrak{h}_0^4, h_0^2)\cup (\mathfrak{h}_2^4, h_2^2)$}
       \put(50.5,-0.5){\small $(\mathfrak{h}_0^4, h_0^2)\cup (\mathfrak{h}_1^4, h_1^2)$}

       \put(20,56){\small \textcolor{darkgreen}{$\partial L$}}
       \put(82.5,62){\small \textcolor{darkgreen}{$\partial L$}}
       \put(88.5,58){\small \textcolor{blue}{$\partial h_2^2$}}
       \put(90,18.5){\small \textcolor{blue}{$\partial L'$}}
	\end{overpic}
	\caption{Coupled Lefschetz handle attachment in dimension $4$ as described in \cref{lemma:k_handle}. Gray dots are regular values and stars are critical values. The solid colored lines indicate the projection of the Lagrangian submanifold while dashed lines are projections of other Lefschetz thimbles for reference. On the left is a coupled Lefschetz $0$-handle. On the top right we attach a (critical) coupled Lefschetz $2$-handle so that $(W',L')=(\D^*S^2, S^2)$. On the bottom right, we instead attach a (subcritical) coupled Lefschetz $1$-handle. The attaching region of $h_1^2$ is drawn in blue on the central regular fiber, along with the attaching sphere of the critical handle $h_2^4$ which is part of $\mathfrak{h}_1^4 = h_1^4 \cup h_1^4 \cup h_2^4$.}
	\label{fig:handle_fibration}
\end{figure}

We perform the inductive step in two parts, corresponding to attaching either a trivial or non-trivial coupled Lefschetz handle. The following lemma treats the former case.

\begin{lemma}[Trivial coupled Lefschetz handles]\label{lemma:trivial_handle}
Let $p:(W^{2n}, L, f)\to \D^2$ be admissible. Fix $0\leq k \leq n$. Let $(W',L') = (W,L) \cup (h_k^{2n}, \emptyset)$ be obtained from $(W,L)$ by a trivial coupled Lefschetz $k$-handle attachment. Then there is an admissible Weinstein Lefschetz fibration $p':(W',L',f')\to \D^2$. 
\end{lemma}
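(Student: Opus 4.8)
The plan is to use that a trivial coupled Lefschetz handle carries no Lagrangian handle, so $L' = L$ and $f' = f$; the entire problem is to absorb the ambient Weinstein handle $h_k^{2n}$ into the existing fibration $p$ without moving the arc $p(L)$ or destroying the regularity of the slice $L_0 := L \cap p^{-1}(z)$ in the fiber. I would split the argument by index, since the subcritical range $0 \le k \le n-1$ and the critical case $k = n$ correspond to different operations on the abstract Weinstein Lefschetz fibration $(W_0; \mathcal{L})$ underlying $p$: enlarging a page versus inserting a new vanishing cycle.

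In the subcritical case the handle is attached along an isotropic sphere $\Lambda \cong S^{k-1} \subseteq \partial W$. First I would isotope $\Lambda$ into a single page $W_0$ of the boundary open book $(W_0, \tau_{\mathcal{L}})$---possible by the flexibility of subcritical isotropic spheres, after a positive stabilization of the open book if necessary---and then, by general position inside the page, arrange $\Lambda$ to be disjoint from $L_0$ and $\partial L_0$ (the relevant count being $\dim \Lambda + \dim L_0 = k + n - 2 < 2n-2$). Attaching $h_k^{2n}$ to the total space along a sphere lying in a page is realized by enlarging the fiber, i.e. by forming $W_0' = W_0 \cup h_k^{2n-2}$ while retaining the tuple $\mathcal{L}$: since $(W_0 \cup h_k^{2n-2})\times \D^2 = (W_0\times \D^2)\cup h_k^{2n}$ and the handle is glued away from every vanishing cycle, each Dehn twist $\tau_{L_j}$ extends by the identity, and $p' : |W_0'; \mathcal{L}| \to \D^2$ is a well-defined Weinstein Lefschetz fibration on $W'$. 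Condition (1) of admissibility holds because $p'(L) = p(L)$ with the same induced Morse function, and condition (2) holds because $L_0$, being regular in $W_0$ and undisturbed by a handle attached away from it, remains regular in $W_0'$ by the characterization of \cref{prop:regularity_characterization}.

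The critical case $k = n$ is where I expect the real work to be, and is the main obstacle. Now $\Lambda \cong S^{n-1}$ is Legendrian, and the strategy is to recognize the attachment of $h_n^{2n}$ as the insertion of a vanishing cycle, which by \cref{def:AWLF} is precisely the effect of attaching a critical handle along the Legendrian lift of a Lagrangian sphere sitting in a page. To put $\Lambda$ in this form I would invoke \cref{thm:obd_exist_leg_page} to obtain an auxiliary strongly Weinstein open book of $\partial W$ carrying $\Lambda$ in a page as a regular Lagrangian, and then apply \cref{thm:GC} to pass to a common positive multistabilization of this open book and $(W_0, \tau_{\mathcal{L}})$; because stabilization only enlarges pages, $\Lambda$ survives in a page of the common stabilization, and because the latter is a stabilization of $(W_0, \tau_{\mathcal{L}})$ it is induced by Lefschetz stabilizations of the total space, i.e. by a $1$-Weinstein homotopy. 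Writing $V$ for the resulting Lagrangian sphere in the enlarged fiber $W_0^{\mathrm{st}}$, the attachment of $h_n^{2n}$ becomes the insertion of $V$ into $\mathcal{L}$. The delicate bookkeeping, which I would carry out carefully, is twofold: (i) performing all stabilizations and choosing $V$ away from $L_0$, so that $L_0$ stays regular in $W_0^{\mathrm{st}}$; and (ii) placing the new critical value in $\D^2 \setminus p(L)$ along a reference path disjoint from $L$, so that $p'(L) = p(L)$ is still an embedded arc inducing $f$ up to $0$-homotopy. With these arranged, conditions (1) and (2) follow as in the subcritical case.
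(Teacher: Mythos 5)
Your proposal is correct and follows essentially the same route as the paper: the subcritical case is handled by absorbing $h_k^{2n}\cong h_k^{2n-2}\times h_0^2$ into the regular fiber away from $L$, and the critical case uses \cref{thm:obd_exist_leg_page} to put the Legendrian attaching sphere in a page of an auxiliary open book, \cref{thm:GC} to pass to a common multistabilization realized by Lefschetz stabilizations preserving admissibility, and then inserts the sphere as a new vanishing cycle disjoint from $L$. The only cosmetic difference is that the paper attaches the subcritical handle along the horizontal boundary via the splitting rather than first isotoping the attaching sphere into a page, but the resulting operation on the fibration is identical.
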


\begin{proof}
Attaching a trivial coupled Lefschetz handle amounts to attaching an ambient Weinstein handle. To prove the lemma, we must do so in a Lefschetz compatible way so as not to disrupt admissibility. We consider the subcritical case first, then the critical case. 

\vspace{2mm}
\textsc{Case 1:} {\em $0\leq k\leq n-1$.}
\vspace{2mm}

Attaching subcritical handles in a Lefschetz-compatible manner is standard. Namely, if $(h_{k}^{2n}, \lambda_{k}, \phi_{k})$ denotes a $2n$-dimensional Weinstein handle of subcritical index $0\leq k\leq n-1$, the model described in \cref{subsec:coupled} decomposes as $h_{k}^{2n} \cong h_{k}^{2n-2}\times h_0^2$; see also Cieliebak's more general fact that subcritical Weinstein manifolds are split:  \cite{Cieliebak2002Split} or \cite[\S 14.4]{CieliebakEliashberg2012FromStein}. 
The horizontal boundary of $W$ is $\partial W_0 \times \D^2$, where $W_0$ denotes the regular fiber of the fibration. After an additional Weinstein homotopy, we may attach the subcritical handle $h_{k}^{2n} \cong h_{k}^{2n-2}\times \D^2$ along the horizontal boundary by identifying the $\D^2$-direction with $h_0^2$. At the fibration level, this only has the effect of attaching the handle $h_{k}^{2n-2}$ to the regular fiber --- modifying the fibration away from $L$ --- and has no impact on admissibility.    

\vspace{2mm}
\textsc{Case 2:} {\em $k=n$.}
\vspace{2mm}

Let $\Lambda\subset \partial W$ be the Legendrian attaching sphere of $h_n^{2n}$. After a further isotopy of the attaching spheres if necessary (e.g. induced by the horizontal Liouville flow) we may assume that $\Lambda \subseteq W_0 \times \partial \D^2$ is embedded in the vertical boundary. 

By \cref{thm:obd_exist_leg_page}, there is a strongly Weinstein open book decomposition $(B_1, \pi_1)$ of $\partial W$ with Weinstein page $W_1$ such that $\Lambda\subseteq W_1$ embeds as a regular Lagrangian on a single page. Let $(B, \pi)$ denote the strongly Weinstein open book decomposition of $\partial W$ induced by the given Lefschetz fibration $p$. 

By the Giroux correspondence (\cref{thm:GC}), $(B_1,\pi_1)$ and $(B,\pi)$ admit a common multistabilization $(B',\pi')$ with Weinstein page $W_0'$. Since $(B, \pi)$ is the open book induced by the original Lefschetz fibration, the sequence of stabilizations bringing $(B, \pi)$ to $(B', \pi')$ is induced by a sequence of Lefschetz fibration stabilizations. We claim that these stabilizations preserve admissibility. At the Weinstein level, a Lefschetz fibration stabilization involves a subcritical handle $\tilde{h}_{n-1}^{2n} = \tilde{h}_{n-1}^{2n-2}\times \D^2$, witnessed as a critical handle attached to the regular fiber, followed by a critical handle $\tilde{h}_{n}^{2n}$ attached along a vanishing cycle. As before, the subcritical handle attachment does not affect admissibility of the resulting fibration. After a generic perturbation if necessary, we may assume the attaching sphere of the critical handle (the new vanishing cycle) avoids $\partial L$ if the latter is non-empty. Since $\partial L$ lies in a page of the open book, the critical handle is moreover disjoint from $L$. The extension of the fibration across the critical handle attachment thus avoids $L$ and also preserves admissibility.

Finally, we attach the original critical handle $h_n^{2n}$ to $W$ using the stabilized fibration with regular fiber $W_0'$. Since $\Lambda \subset W_1^{2n-2}$, and the page $W_0'$ was obtained from $W_1$ by attaching $(2n-2)$-dimensional Weinstein handles, we have $\Lambda \subset W_0'$ embedded in the new regular fiber as a regular Lagrangian. Thus, to attach the critical handle $h_n^{2n}$ to $W$ to obtain $W'$ with  a Lefschetz fibration, we simply include $\Lambda$ as an additional vanishing cycle in the stabilized Lefschetz fibration of $W$ with page $W_0'$. As with the critical handles involved in stabilization, we may assume this affects the fibration away from the Lagrangian $L$ and thus preserves admissibility.
\end{proof}

\begin{remark}
\cref{lemma:trivial_handle} is the only place where we appeal to the Giroux correspondence and convex hypersurface theory.    
\end{remark}

Now we consider non-trivial coupled Lefschetz handles. 

\begin{lemma}\label{lemma:arc_image_lemma}
Let $p:W^{2n} \to \D^2$ be a Weinstein Lefschetz fibration. Let $h_n^{2n}$ denote one of the critical handles comprising the Weinstein structure on $W$ whose attaching sphere is one of the vanishing cycles of the fibration. Assume moreover that the vanishing cycle is a regular Lagrangian in the regular fiber. If $(h_n^{2n},h_\ell^n)$ is a coupled Weinstein $(n,\ell)$-handle, then up to Weinstein homotopy, $p(h_\ell^n)\subset \D^2$ is an embedded arc admitting a parametrization $\gamma:[0,1]\to p(h_\ell^n)$ such that $\gamma^{-1}\circ p$ is $0$-homotopic to the index-$\ell$ Morse function on $h_\ell^n$. Moreover, for any regular value $z\in p(h_\ell^n)$, the intersection $L_0 = h_\ell^n \cap p^{-1}(z)$ is a regular Lagrangian in the regular fiber $W_0$.
\end{lemma}

\begin{remark}\label{remark:thimble_arc}
When $\ell=0,n$, \cref{lemma:arc_image_lemma} describes a Lefschetz thimble and its arc projection.    
\end{remark}

\begin{proof}
In a $2n$-dimensional coupled Weinstein $(n,\ell)$-handle, the Lagrangian $\ell$-handle $h_\ell^n$ sits inside the Weinstein $n$-handle as 
\[
h_{\ell}^n = \D^\ell_{(y_1, \dots, y_\ell)} \times \D^{n-\ell}_{(x_{\ell+1}, \dots, x_n)} \subset \D^n_{(y_1, \dots, y_n)} \times \D^n_{(x_1, \dots, x_n)} = h_n^{2n}
\]
given by $\{y_{\ell+1} = \cdots = y_n = x_1 = \cdots = x_\ell = 0\}$. Near a critical point of a Lefschetz fibration, there are local complex coordinates so that the fibration is of the form $p(z_1,...,z_n) = \sum_{j=1}^{n} z_j^2$. We may take $z_j = x_j + i\, y_j$ relative to the Weinstein $n$-handle. Then
\begin{align*}
   p\mid_{h_\ell^n}(z_1,\dots,z_n) &= \sum_{j=1}^\ell (iy_j)^2 + \sum_{j=\ell+1}^n x_j^2 = -\sum_{j=1}^\ell y_j^2 + \sum_{j=\ell+1}^n x_j^2. 
\end{align*}
Clearly $p(h_\ell^n)\subset \{\mathrm{Im}(z) = 0\}\subset \C$. Assuming $\ell\neq 0,n$ for simplicity (c.f. \cref{remark:thimble_arc}), the attaching sphere $S^{\ell-1}_{(y_1,\dots,y_\ell)} \times \{x_{\ell+1} = \cdots = x_n =0\}$ and the belt sphere $\{y_{1} = \cdots = y_\ell =0\} \times S^{n-\ell-1}_{(x_{\ell+1},\dots,x_n)}$ are mapped to $-1$ and $+1$, respectively. Thus, $p(h_\ell^n)$ is an arc, and the fibration $p$ induces the model $\ell$-handle function $\sum_{i=\ell+1}^n x_j^2 - \sum_{j=1}^\ell y_j^2$ when restricted to $h_{\ell}^n$. 

To verify the last claim about regular intersections in the regular fiber, consider the attaching region of the Lagrangian $k$-handle (the co-attaching region following from a similar argument), which sits inside the attaching region of the ambient Weinstein $n$-handle attaching region as
\[
S^{k-1}_{(y_1,\dots,y_k)} \times \D^{n-k}_{(x_{k+1}, \dots, x_n)} \subset S^{n-1}_{(y_1,\dots,y_n)} \times \D^n_{(x_1,\dots,x_n)}.
\]
Identify via a contactomorphism the latter region with the (disk) jet bundle 
\[
S^{n-1}_{(y_1,\dots,y_n)} \times \D^n_{(x_1,\dots,x_n)} \cong J^1S^{n-1}_{(y_1,\dots,y_n)} \cong [-1,1]\times \D^*S^{n-1}.
\]
Since the vanishing cycle $S^{n-1}_{(y_1,\dots,y_n)}\times \{0\}$ is a regular Lagrangian in the regular fiber by assumption, we may thus locally model the Weinstein structure of the regular fiber $W_0$ as the cotangent bundle of $S^{n-1}_{(y_1,\dots,y_n)}$. Here, the Lagrangian handle restricts to the conormal bundle of a smoothly embedded $S^{k-1}\subset S^{n-1}$. Conormal bundles of smooth submanifolds of the $0$-section of a cotangent bundle are regular. 
\end{proof}

\begin{lemma}[Non-trivial coupled Lefschetz handles]\label{lemma:k_handle}
Let $p:(W^{2n}, L, f)\to \D^2$ be admissible and assume that $\partial L \neq 0$. Fix $1\leq k \leq n$. Let $(W',L') = (W,L) \cup (\mathfrak{h}_\ell^{2n}, h_\ell^n)$ be obtained from $(W,L)$ by a coupled Lefschetz $\ell$-handle attachment and extend $f:L \to \R$ to $f':L' = L\cup h_\ell^n \to \R$ by the model Morse function on $h_\ell^n$ with a single critical point of index $\ell$. Then there is an admissible Weinstein Lefschetz fibration $p':(W',L',f')\to \D^2$. 
\end{lemma}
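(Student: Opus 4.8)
The plan is to attach the coupled Lefschetz $\ell$-handle one ambient handle at a time, preserving admissibility at each stage. By \cref{def:coupled_lefschetz_handle} and \cref{lemma:coupled_weinstein_lefschetz_handle_htpy}, when $1 \le \ell \le n-1$ the coupled Lefschetz handle decomposes as
\[
(\mathfrak{h}_\ell^{2n}, h_\ell^n) = (h_\ell^{2n}, \emptyset) \cup (h_{n-1}^{2n}, \emptyset) \cup (h_n^{2n}, h_\ell^n),
\]
with $h_{n-1}^{2n}$ and $h_n^{2n}$ in canceling position, while for $\ell = n$ it is the single critical handle $(h_n^{2n}, h_n^n)$ with Lagrangian core. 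The two subcritical handles $h_\ell^{2n}$ and $h_{n-1}^{2n}$ carry no Lagrangian, so they are trivial coupled Lefschetz handles; I would attach them using \cref{lemma:trivial_handle}, Case 1, which modifies only the regular fiber, $W_0 \rightsquigarrow W_0'$, away from $L$, and preserves admissibility. The entire content of the lemma is therefore concentrated in the critical handle $h_n^{2n}$, which carries $h_\ell^n$ as the mixed core/co-core configuration of \cref{lemma:arc_image_lemma}.

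To attach $h_n^{2n}$, I would position its Legendrian attaching sphere $\Lambda$ so that its fiber projection is a regular Lagrangian sphere $S^{n-1} \subseteq W_0'$ and so that the Lagrangian-handle attaching region $S^{\ell-1} \times \D^{n-\ell} \subseteq \Lambda$ (in the notation of \cref{lemma:arc_image_lemma}) is glued to the prescribed attaching sphere inside $\partial L$. When $\ell = n$ this is automatic: the Lagrangian handle is the core of $h_n^{2n}$, so $\Lambda$ is a Legendrian lift of $\partial L$, which is a regular Lagrangian in a page by admissibility of $p$. When $\ell < n$, the regularity of $\Lambda$ is supplied by the canceling-pair structure: the subcritical handle $h_{n-1}^{2n}$ attaches a critical $(n-1)$-handle to the fiber, and $\Lambda$ is the union of its core disk with a Lagrangian disk in $W_0'$ which, near $\partial L$, is given by the jet-bundle/conormal model of \cref{lemma:arc_image_lemma}; since $\partial L$ is regular in the fiber, the resulting sphere is a regular Lagrangian.

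Once $\Lambda$ is a regular vanishing cycle, \cref{lemma:arc_image_lemma} applies verbatim: the image $p'(h_\ell^n)$ is an embedded arc whose induced function is $0$-homotopic to the model index-$\ell$ Morse function on $h_\ell^n$, and each regular slice $h_\ell^n \cap (p')^{-1}(z)$ is a regular Lagrangian (a conormal bundle in the local $\D^* S^{n-1}$ model). It then remains to verify global compatibility, which is where I expect the main difficulty: I must check that $p'(h_\ell^n)$ meets $p(L)$ precisely along the image of the shared attaching sphere $S^{\ell-1} \subseteq \partial L$, so that $p'(L') = p(L) \cup p'(h_\ell^n)$ is a single smoothly embedded arc, and that the concatenated induced Morse function is $0$-homotopic to $f'$. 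Here I would use that admissibility of $p$ places $\partial L$, hence the attaching sphere, in a regular fiber near $\partial \D^2$ at the endpoint of the arc $p(L)$, and that the local model of \cref{lemma:arc_image_lemma} can be isotoped so its arc emanates from exactly that endpoint; concatenating the index-$\ell$ handle onto $f$ then recovers $f'$ up to $0$-homotopy, completing the inductive step.
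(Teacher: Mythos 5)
Your skeleton is the same as the paper's --- the three-handle decomposition, fiberwise attachment of the two subcritical handles, realizing $h_n^{2n}$ as a Lefschetz thimble over a vanishing cycle, then \cref{lemma:arc_image_lemma} --- but there is a genuine gap at the step that carries all the content: the claim that the vanishing cycle is a \emph{regular} Lagrangian sphere in the new fiber. Your justification, ``since $\partial L$ is regular in the fiber, the resulting sphere is a regular Lagrangian,'' is the conclusion, not an argument; regularity does not propagate to unions or to nearby Lagrangians for free, and establishing it is the bulk of the paper's Case 1. Concretely, two things are missing. First, the fiber handle $h_\ell^{2n-2}$ cannot be attached ``away from $L$'' as an application of \cref{lemma:trivial_handle}, Case 1: it must be attached along the boundary of the conormal bundle $\partial \N^*\sigma \subseteq \partial \D^*(\partial L)$, where $\sigma \cong S^{\ell-1}\subseteq \partial L$ is the attaching sphere of the Lagrangian handle and $W_0 = \D^*(\partial L)\cup W_{\partial L}$ is the splitting furnished by admissibility. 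This specific placement is what allows the eventual vanishing cycle to meet both $\D^*(\partial L)$ and the new fiber handle; there is a real tension in your write-up, since handles attached generically away from $L$ could not later support a disk interacting with $\partial L$ through the conormal model.

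Second, the Lagrangian disk you invoke must be exhibited and shown to be regular: the paper takes $D = \N^*\sigma \cup h_\ell^{n-1}$, the union of the conormal bundle of $\sigma$ with the model Lagrangian $\ell$-handle $h_\ell^{n-1}\subseteq h_\ell^{2n-2}$ coming from the coupled structure. Regularity follows because conormal bundles are regular and $h_\ell^{n-1}$ is tangent to the Weinstein structure; and --- a point entirely absent from your proposal --- one must check that this union is a \emph{disk}, which the paper does by a handle cancellation: writing $\N^*\sigma \cong \left(\D^{\ell-1}\times\D^{n-\ell}\right)\cup h_{\ell-1}^{n-1}$, the $\ell$-handle cancels the $(\ell-1)$-handle, so $D$ is a properly embedded regular Lagrangian disk. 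The vanishing cycle is then $D\cup C$, with $C$ the core of the fiber handle $h_{n-1}^{2n-2}$ (this form of the attaching sphere is exactly what the canceling position of $h_{n-1}^{2n}$ and $h_n^{2n}$ buys). With that in hand, \cref{lemma:arc_image_lemma} does the rest, and your closing concern about concatenating $p'(h_\ell^n)$ onto $p(L)$ is absorbed by the local model there, since the thimble's arc emanates from the fiber containing $\partial L$ by construction. Your Case $\ell = n$ is correct and matches the paper. Note also that the conormal construction lives in the proof of the present lemma, not in \cref{lemma:arc_image_lemma}, whose jet-bundle model serves only to verify regularity of the slices $h_\ell^n\cap p^{-1}(z)$.
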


\begin{proof}
Let $W_0^{2n-2}$ be the regular fiber of the fibration $p:(W,L,f)\to \D^2$. By admissibility, $\partial L\subseteq W_0$ is a regular (closed) Lagrangian, so up to Weinstein homotopy of the regular fiber we may write $W_0 = \D^*(\partial L) \cup W_{\partial L}$ where $W_{\partial L}$ is a complementary Weinstein cobordism. We consider the subcritical and critical cases separately.     

\vspace{2mm}
\textsc{Case 1:} {\em $1\leq \ell\leq n-1$.}
\vspace{2mm}

Recall that a coupled Lefschetz $\ell$-handle is a sequence of three handle attachments 
\[
(\mathfrak{h}_\ell^{2n}, h_\ell^n) = (h_\ell^{2n}, \emptyset) \cup (h_{n-1}^{2n}, \emptyset) \cup (h_n^{2n}, h_\ell^n).
\]
We attach the first two handles to $W$ and extend the fibration in a non-singular way so that the regular fiber is $\tilde{W}_0 := \D^*(\partial L) \cup W_{\partial L} \cup h_\ell^{2n-2} \cup h_{n-1}^{2n-2}$. It remains to attach the coupled Weinstein $(n,\ell)$-handle $(h_n^{2n}, h_\ell^n)$, verify that the vanishing cycle associated to the attaching sphere of $h_n^{2n}$ is a regular Lagrangian in the regular fiber of the fibration, and then apply \cref{lemma:arc_image_lemma} to give admissibility of the resulting Lefschetz fibration. 

We first verify that the attaching sphere of $h_n^{2n}$ is (a Legendrian lift of) an embedded regular Lagrangian sphere in the new regular fiber $\tilde{W}_0$, along which we attach the corresponding Lefschetz thimble. By definition of a coupled Lefschetz $\ell$-handle, $h_n^{2n}$ cancels $h_{n-1}^{2n-2}$, so we may assume that the attaching sphere of $h_n^{2n}$ is $D\cup C$, where $C$ is the core of $h_{n-1}^{2n-2}$ in the regular fiber and $D$ is some disk. Thus, it suffices to argue that $D$ is a regular Lagrangian disk in $W_0 \cup h_\ell^{2n-2}$.

Let $\sigma \cong S^{\ell-1} \subseteq \partial L$ be the attaching sphere of the Lagrangian $k$-handle $h_\ell^n$, and let $\N^*\sigma \subseteq \D^*(\partial L)$ denote the (disk) conormal bundle. Note that $\N^*\sigma\cong S^{\ell-1}\times \D^{n-\ell}$ is a Lagrangian submanifold of $\D^*(\partial L)$. Inside the Weinstein handle $h_\ell^{2n-2}$ that was attached to the original regular fiber $W_0$ there is a model Lagrangian $\ell$-handle $h_\ell^{n-1}\cong \D^\ell \times \D^{n-\ell-1}\subseteq h_\ell^{2n-2}$ tangent to the Weinstein structure. Up to Weinstein homotopy, the Weinstein handle $h_\ell^{2n-2}$ was attached to $W_0 = \D^*(\partial L) \cup W_{\partial L}$ along the boundary of the conormal bundle $\partial \N^*\sigma \subseteq \partial \D^*(\partial L)$, so that inside $\D^*(\partial L) \cup h_\ell^{2n-2}$ there is a properly embedded exact Lagrangian submanifold 
\begin{equation}\label{eq:subcritical_proof}
\N^*\sigma \cup h_{\ell}^{n-1} \cong (S^{\ell-1}\times \D^{n-\ell}) \cup (\D^\ell \times \D^{n-\ell-1})    
\end{equation}
where $h_\ell^{n-1} \cong \D^\ell \times \D^{n-\ell-1}$ is attached along $S^{\ell-1}\times \{q\}$ for some $q\in \partial \D^{n-\ell}$. As conormal bundles are regular, it follows that $\N^*\sigma \cup h_{\ell}^{n-1}\subset \D^*(\partial L) \cup h_\ell^{2n-2}$ is regular. Note that we may view the conormal bundle as 
\begin{equation}\label{eq:subcritical_proof2}
\N^*\sigma \cong S^{\ell-1}\times \D^{n-\ell} \cong \left(\D^{\ell-1}\times \D^{n-\ell}\right) \cup h_{\ell-1}^{n-1}
\end{equation}
where we think of $\D^{\ell-1}\times \D^{n-\ell}$ as a $0$-handle. Therefore, the $\ell$-handle $h_\ell^{n-1}\cong \D^\ell \cup \D^{n-\ell-1}$ in \eqref{eq:subcritical_proof} cancels the ($\ell-1$)-handle in \eqref{eq:subcritical_proof2} and thus $\N^*\sigma \cup h_{\ell}^{n-1} \cong D$ is a properly embedded regular Lagrangian disk in $W_0 \cup h_\ell^{2n-2}$, as desired. 

\vspace{2mm}
\textsc{Case 2:} {\em $\ell=n$.}
\vspace{2mm}

In this case we are simply attaching a coupled Weinstein $n$-handle $(h_n^{2n}, h_n^n)$. By assumption, the attaching spheres of the ambient Weinstein handle and the Lagrangian handle coincide. Also by assumption $\partial L\subset W_0$ is a regular Lagrangian in the regular fiber, so it must be the case that (one component of) $\partial L$ is a smoothly embedded $n$-sphere. Attaching a Lefschetz thimble along this sphere and applying \cref{lemma:arc_image_lemma} gives the desired admissible fibration. 
\end{proof}

\begin{proof}[Proof of \cref{thm:main_alt}.]
Given $(W,L,f)$, \cref{prop:regular_coupled_lefschetz_handlebody} implies that $(W,L)$ admits a coupled Lefschetz handle decomposition with a single coupled Lefschetz $0$-handle. By \cref{lemma:0handles}, \cref{lemma:trivial_handle}, and \cref{lemma:k_handle}, an admissible Lefschetz fibration may be built handle by handle. 
\end{proof}

\section{Mutations via arc projections}\label{sec:mutation}

In this section we prove \cref{thm: mutation}, and describe how to visualize embedded regular fillings of Legendrian links in Lefschetz fibrations of the standard Weinstein $4$-ball.

We recall the notions of Weinstein pairs as defined in \cite{Eliashberg2018Weinstein} and of Lagrangian mutation as in \cite{pascaleff-tonkonog, yau_surgery}.

\begin{definition}
    A \emph{Weinstein pair} $(W, \Sigma)$ consists of a Weinstein domain $(W^{2n},\lambda,\phi)$ together with a Weinstein hypersurface $(\Sigma^{2n-2}, \lambda|_{\Sigma}) \subset \partial W$.
\end{definition}

For our purpose, $W$ will be the standard $4$-ball $(B^4, \lambda_{\mathrm{st}}, \phi_{\mathrm{st}})$, $\Sigma$ will be the Weinstein thickening of a Legendrian link $\Lambda \subset (S^3, \xi_{\mathrm{st}})$, and we use the abbreviated notation $(B^4, \Lambda)$ for the Weinstein pair.

\begin{definition}
    Let $L$ be a properly embedded exact Lagrangian in a symplectic manifold $M$, and $D$ be an embedded Lagrangian disk in the interior of $M$ such that $L \cap D = \partial D$ \emph{cleanly}, i.e. $D$ is transverse to $L$ along $\partial D$ and $\partial D$ is an embedded smooth curve in $L$. Such a disk is called an \emph{$\mathbb{L}$-compressing disk}.\footnote{This definition was coined by Casals and Weng in \cite{casals2024microlocal}.} Then, the \emph{mutated Lagrangian} $\mu_D(L)$ is obtained by Lagrangian disk surgery (see \cite{polt,yau_surgery}) of $L$ along $D$.
\end{definition}

It is shown in \cite{yau_surgery} that if $L$ is exact, so is $\mu_D(L)$, and they are smoothly isotopic. In addition, $\mu_D(L)$ has an $\mathbb{L}$-compressing disk $D'$ such that $\mu_{D'}(\mu_D(L)) = L$. The argument of \cite{casals_gao24} and a number of related papers in the area (e.g. \cite{casals-li22,casals2024microlocal,cggs24}) use the notion of an {\em $\mathbb{L}$-compressing system}, which is a maximal linearly independent (in $H_1(L)$) collection of curves that simultaneously bound $\mathbb{L}$-compressing disks.

\begin{definition}\label{def: cal-skeleton}
    A {\em closed arboreal skeleton} (or \emph{CAL-skeleton}, as in \cite{casals_skeleta}) of the Weinstein pair $(B^4, \Lambda)$ \emph{associated to an exact filling $L$} is the data $(L, \mathcal{D}, \Gamma)$ where
    \begin{itemize}
    \item $\mathcal{D}=\{D_i\}$ is a collection of $\mathbb{L}$-compressing disks with boundaries that form an $\mathbb{L}$-compressing system $\Gamma = \{ \gamma_i\}$ for $L$, and 
    \item $L \cup \mathcal{D}$ is arboreal (see \cite{nadler2017arboreal}) and tangent to the Liouville flow of the Weinstein structure. In particular, $L\cup \mathcal{D}$ is regular.
\end{itemize}
\end{definition}

Recall that arc-admissibility of regular Lagrangians requires the notion of an efficient Morse function on the Lagrangian. We incorporate this with CAL-skeleta.

\begin{definition}\label{def: compatible_CAL}
    Let $f:L \to [0,1]$ be an efficient Morse function and $(L,\mathcal{D},\Gamma)$ a CAL-skeleton for the Weinstein pair $(B^4, \Lambda)$. We say $(L,\mathcal{D},\Gamma)$ is {\em compatible} with $(L,f)$ if for all $\gamma \in \Gamma$, $\gamma$ is embedded in the $1$-skeleton of some handle decomposition associated to $f$.
\end{definition}

Given a CAL-skeleton $(L, \mathcal{D}, \Gamma)$, it is natural to ask if one obtain CAL-skeleta after mutations on $L$ along disks in $\mathcal{D}$. Given $D \in \mathcal{D}$, the mutated Lagrangian is $\mu_D(L)$, and the mutated $\mathbb{L}$-compressing system $\mu_D(\Gamma)$ is the result of a $\gamma$-exchange on $\Gamma$ \cite[Section 2.3]{casals_gao24}, where $\gamma = \partial D$. As discussed in \cite[Example 4.10]{casals_gao24}, it can happen that there exists $D$ such that some curves in the $\mathbb{L}$-compressing system become immersed in $\mu_D(\Gamma)$, thus we do not get a CAL-skeleton after mutation along $D$. Despite this, in this section, we show that the property of being arc-admissible (for the same Lefschetz fibration) is preserved under mutations.

\subsection{Arc-admissibility of mutated Lagrangians}

Let $\Lambda \subset (S^3, \xi_{\mathrm{st}})$ be a Legendrian link, with $L$ an exact filling and $(L, \mathcal{D},\Gamma)$ an associated CAL-skeleton. Following \cite[Lemma 2.2]{EliashbergGanatraLazarev2018Regular} as in the proof of \cref{thm:main}, the standard Weinstein structure on $(B^4, \Lambda)$ is homotopic to the one obtained by starting with $T^*L$ (seen as a neighborhood of $L$ inside $B^4$) and then attaching Weinstein $2$-handles. Let $W_L$ denote the cobordism comprising the collection of the latter $2$-handles, whose cores can be regarded as the disks $D_i \in \mathcal{D}$.

\begin{figure}[ht]
	\begin{overpic}[scale=.6]{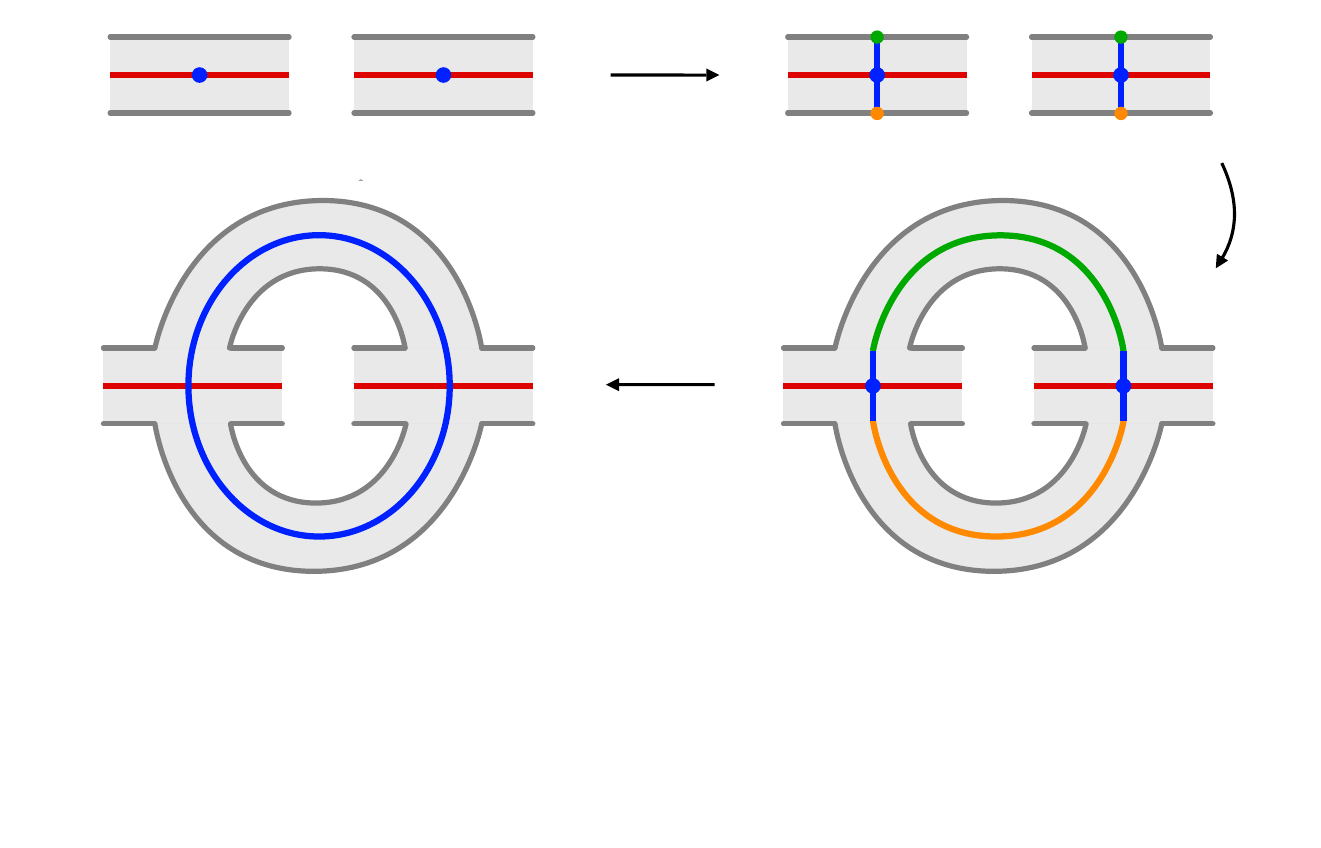}
    \put(3,57.5){\textcolor{darkred}{$V_0$}}
    \put(13,63){\small \textcolor{darkblue}{$S^0$}}
    \put(8,45){\textcolor{darkblue}{$V_{1,i}$}}
    \end{overpic}
    \vskip-2.3cm
    \caption{Attaching a coupled Lefschetz $1$-handle as described in \eqref{part:johns2} of Johns' algorithm. In the top right, the green (resp. orange) $S^0$ is the positive (resp. negative) boundary of the conormal bundle in the regular fiber of the attaching sphere of the Lagrangian $1$-handle.}
    \label{fig:conormal}
    \end{figure}

We first recall how to build an admissible Lefschetz fibration for the $0$-section of $T^*L$, following the argument of \cite{Johns2011Lefschetz} used in \cref{sec:proof}. First, choose a handle decomposition of $L$ with one $0$-handle. Since we assume $\partial L \neq \emptyset$ in this section, we may assume there are no $2$-handles. Beginning with an initial trivial Lefschetz fibration $\D^2 \times \D^2$:
\begin{enumerate}
    \item For the $0$-handle of $L$, add a $1$-handle to the fiber and a vanishing cycle $V_0$ along the core of this $1$-handle. This identifies the $0$-handle of $L$ with the vanishing thimble associated to $V_0$.
   \item For every $1$-handle $h_i$ of $L$, identify the attaching sphere $S^0_i \subset V_0$ and attach a pair of $1$-handles to the regular fiber, one along the positive conormal pushoff of $S^0_i$ and the other along the negative conormal pushoff. Then, include a vanishing cycle $V_{1,i}$ along the union of the cores of the $1$-handles and the conormal bundle of $S^0_i$ in the original regular fiber.\label{part:johns2} 
\end{enumerate}

\begin{figure}[ht]
    \vskip-2cm
	\begin{overpic}[scale=.6]{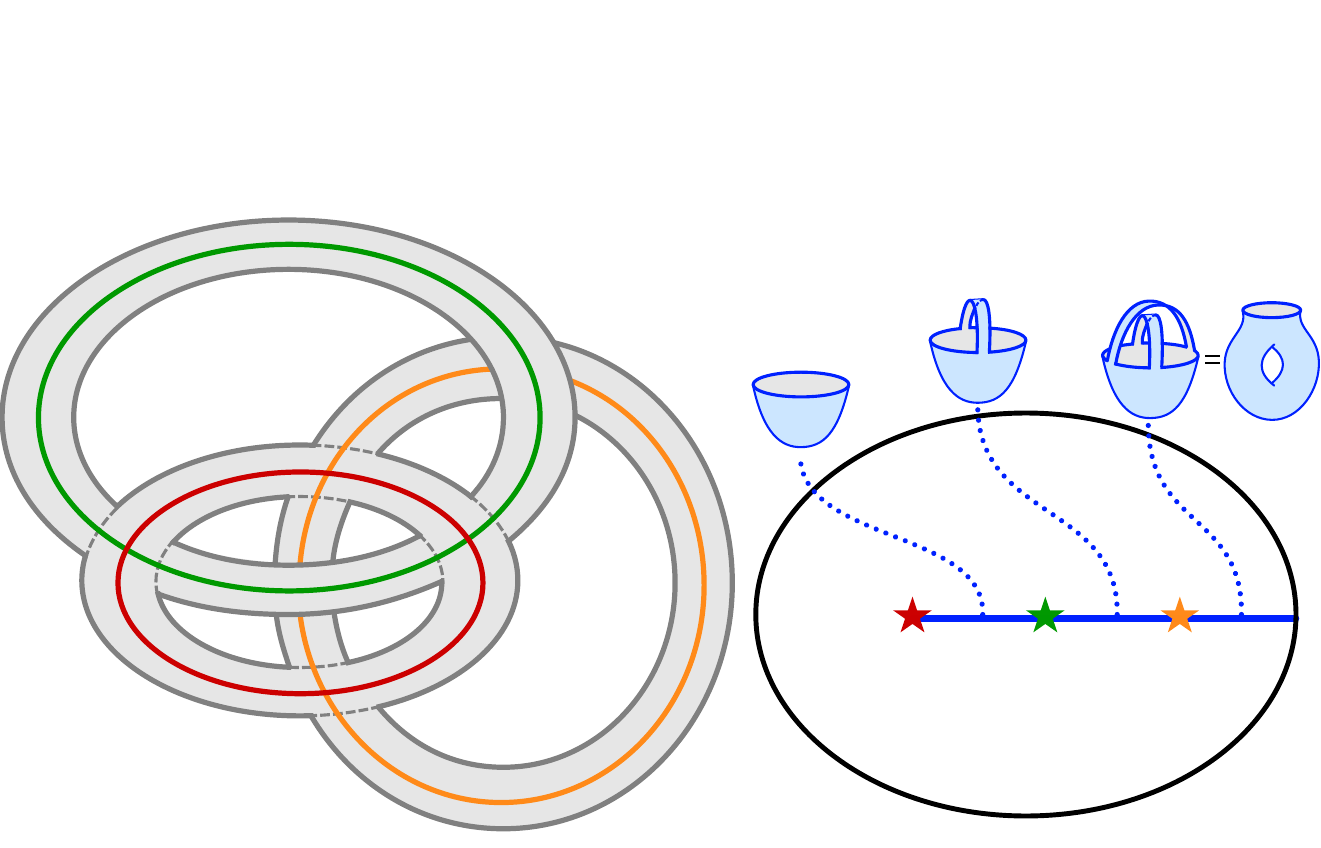}
    \put(-1,21){\textcolor{gray}{$W_0$}}
    \put(7,10){\textcolor{darkred}{$V_0$}}
    \put(10,37){\textcolor{darkgreen}{$V_{1,1}$}}
    \put(45,20){\textcolor{Orange}{$V_{1,2}$}}
    \put(67.5,14){\small \textcolor{darkred}{$x_0$}}
    \put(77.5,14){\small \textcolor{darkgreen}{$x_{1,1}$}}
    \put(87.5,14){\small \textcolor{Orange}{$x_{1,2}$}}
    \put(100,40){\textcolor{darkblue}{$L$}}
    \end{overpic}
    \caption{The admissible Lefschetz fibration for the pair $(T^*L,L)$ when $L$ is a once-punctured torus. The regular fiber $W_0$ of the fibration is in gray.}
    \label{fig: LF_cotangent}
    \end{figure}

 See \cref{fig: LF_cotangent}. In the above Lefschetz fibration, the image of $L$ is a ray starting at the critical point corresponding to $V_0$ and ending at the boundary of the base $\D^2$.
 In the general case, there will be two kinds of critical points for the Lefschetz fibration representing $W$, the first coming from the ones in $T^*L$, and the others coming from the $2$-handles in the cobordism $W_L$. Up to Weinstein homotopies and homotopies through Lefschetz fibrations,  any ray starting at $V_0$ that passes through all the critical points for $T^*L$, and misses all the critical points for $W_L$, can a priori be the image of $L$. We therefore introduce the notion of \emph{standard Weinstein Lefschetz picture} to fix the background Lefschetz data.

 \begin{definition}[Standard Weinstein Lefschetz picture]\label{def: standard_WLp}
     Given a coupled Weinstein structure on $(W,L,f)$ admitting a compatible CAL-skeleton associated to $L$ and an admissible Lefschetz fibration $p$, we can ensure via a Weinstein homotopy that
     \begin{itemize}
         \item the critical values for the vanishing cycles $V_0, V_{1,i}$, denoted as $x_0, x_{1,i}$, in $T^*L$ are arranged along a horizontal line in the base, 
         \item the critical values of the fibration restricted to $W_L$, denoted as $q_1, \dots, q_k$, are arranged along a parallel horizontal line below, and 
         \item the image of $L$ under $p$ is the horizontal line starting at $x_0$ and terminating at the boundary of $\D^2$, passing through all the values $x_{1,i}$.
     \end{itemize}
     We call this the {\em standard Weinstein Lefschetz picture} of the pair $(W,L)$. Furthermore, by a homotopy of the coupled Weinstein Lefschetz handlebody, we may combine all the critical values $\{x_{1,i}\}$ of $p_L$ to a single value $x_1$ whose pre-image contains all index $1$ critical points of $L$. We call this the {\em degenerate standard Weinstein Lefschetz picture} (see \cref{fig: mutation}).
  \end{definition}

  In a standard Weinstein Lefschetz picture, the disks in an $\mathbb{L}$-compressing system for $L$ corresponding to the cores of $W_L$, after a possible Hamiltonian isotopy, project to arcs joining $p(L)$ with the critical values $q_1, \dots, q_k$. Generally, when we fix the $\mathbb{L}$-compressing system, we fix the associated vanishing cycles on the fiber, i.e. we fix the arcs. We now describe the result of mutations on $L$, and prove \cref{thm: mutation} by proving the following restatement.

\begin{figure}[ht]
\begin{overpic}[scale=0.65]{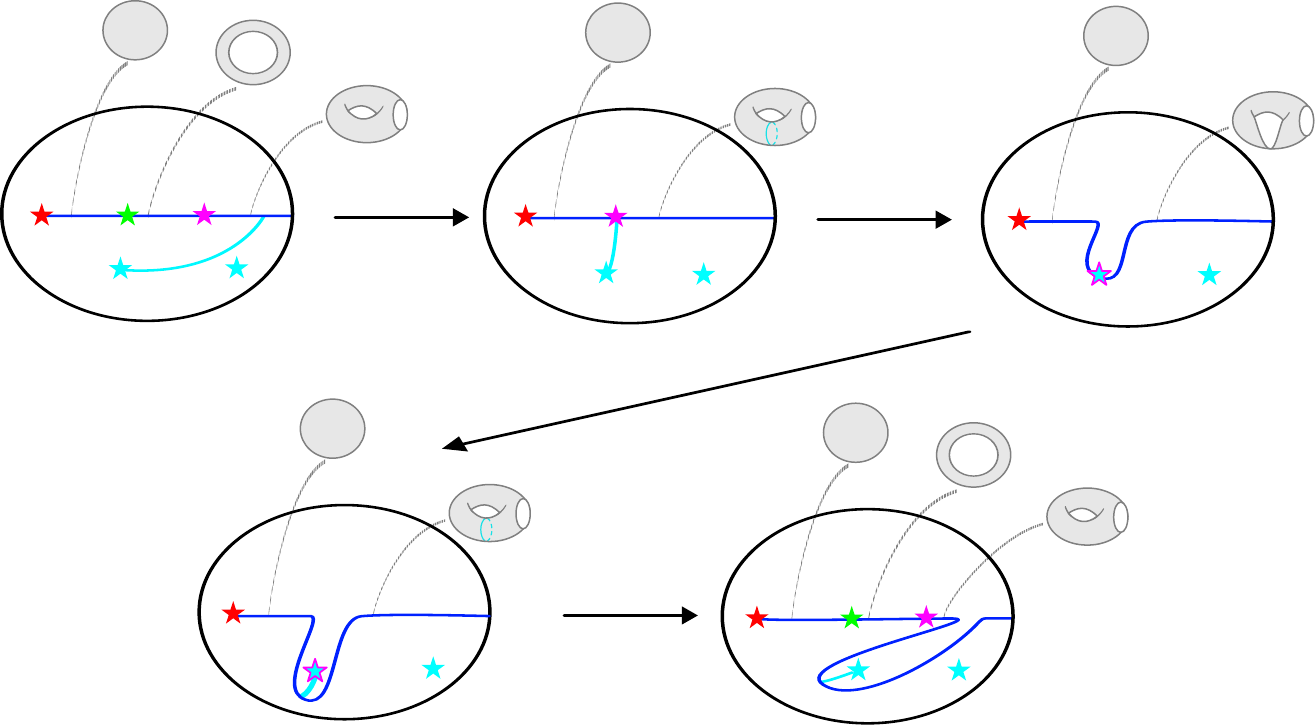}
\put(1,40){\small{$x_0$}}
\put(7,40){\small{$x_{1,1}$}}
\put(14,40){\small{$x_{1,2}$}}
\put(5,36){\small\textcolor{blue}{$L$}}
\put(12,36){\small\textcolor{cyan}{$D_1'$}}
\put(7,33){\small{$q_1$}}
\put(15,33){\small{$q_2$}}

\put(38,40){\small{$x_0$}}
\put(44,40){\small{$x_{1}$}}
\put(42,36){\small\textcolor{blue}{$L$}}
\put(47,36){\small\textcolor{cyan}{$D_1$}}
\put(43,33){\small{$q_1$}}
\put(50.5,33){\small{$q_2$}}

\put(76,39.5){\small{$x_0$}}
\put(82,31){\small{$q_1 = x_1'$}}
\put(93,35.5){\small{$q_2$}}

\put(16,10){\small{$x_0$}}
\put(27,6){\small\textcolor{blue}{$\mu_{D_1}(L)$}}
\put(19,3){\small\textcolor{cyan}{$\widehat{D_1}$}}

\put(56,10){\small{$x_0$}}
\put(62,10){\small{$x_{1,1}$}}
\put(68,10){\small{$x_{1,2}$}}
\put(56,5.5){\small\textcolor{blue}{$\mu_{D_1}(L)$}}
\put(66.5,0.5){\small\textcolor{cyan}{$\widehat{D_1}'$}}
\end{overpic}
    \caption{The steps in realizing the mutated Lagrangian filling as an arc-admissible filling. The first picture is the standard Weinstein Lefschetz picture while the second is the degenerate standard Weinstein Lefschetz picture.}
    \label{fig: mutation}
\end{figure}

\begin{proposition}\label{prop: once_mutated_filling}
    Consider the Weinstein pair $(B^4, \Lambda)$ and a regular Lagrangian filling $L$ of $\Lambda$ with an efficient Morse function $f$. Suppose there is an associated CAL-skeleton $(L, \mathcal{D}, \Gamma)$.  Let $p_L$ be an admissible Lefschetz fibration for the triple $(B^4,L,f)$ which gives the standard Weinstein Lefschetz picture. Then, any Lagrangian $L'$ obtained by mutations on $L$ along disks in $\mathcal{D}$ is arc-admissible for $p_L$. 
\end{proposition}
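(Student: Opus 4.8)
\emph{Strategy.} It suffices to treat a single mutation $\mu_D(L)$ along one disk $D = D_i \in \mathcal{D}$ and then to iterate, so I would fix the \emph{degenerate standard Weinstein Lefschetz picture} of \cref{def: standard_WLp} and realize the mutation entirely inside the fixed fibration $p_L$ as a local re-routing of the arc $p_L(L)$. Because $(L,\mathcal{D},\Gamma)$ is compatible with $(L,f)$ in the sense of \cref{def: compatible_CAL}, the curve $\gamma = \partial D$ is embedded in the $1$-skeleton; after merging the index-$1$ critical values into the single value $x_1$, I may arrange that $\gamma$ is the attaching circle of an index-$1$ handle and that $D$ is, up to Hamiltonian isotopy, the Lefschetz thimble over a short arc from $p_L(L)$ near $x_1$ down to the critical value $q_i$ of $W_L$, with $\gamma$ the associated vanishing cycle carried on $L$.

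\emph{The local move.} The heart of the argument is a local model for Lagrangian disk surgery inside $p_L$, carried out in a neighborhood of $D$ together with a collar of $\gamma$ in $L$. I would model the regular fiber near the relevant slice as a cotangent bundle in which $D$ appears as the thimble over $q_i$ attached along $\gamma$, and show that disk surgery replaces a collar of $\gamma$ together with $D$ by a coupled Lefschetz handle whose projection continues the arc past $x_1$ out to $q_i$. Concretely, this swaps the roles of the two critical values: $q_i$ becomes a new index-$1$ value $x_1'$ of $\mu_D(L)$, while the index-$1$ critical value associated with $\gamma$ is freed to host the reverse $\mathbb{L}$-compressing disk $\widehat D$ furnished by Yau's construction (recall $\mu_{\widehat D}(\mu_D(L)) = L$). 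Applying \cref{lemma:arc_image_lemma} to the thimble over $q_i$ shows that this new piece projects to an embedded arc and meets every regular fiber in a regular Lagrangian.

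\emph{Admissibility and iteration.} Splicing the new piece onto the unchanged part of $p_L(L)$ shows that, after a Hamiltonian isotopy, $p_L(\mu_D(L))$ is an embedded arc from $x_0$ to $\partial \D^2$; the induced Morse function is efficient and $0$-homotopic to an efficient function $f'$ on $\mu_D(L)$ (which exists, since $\mu_D(L)$ is exact and smoothly isotopic to $L$ by Yau's result), and by the previous step each regular fiber meets $\mu_D(L)$ in a regular Lagrangian. Thus $p_L$ is admissible for $\mu_D(L)$. Un-degenerating the picture returns $\mu_D(L)$ to a standard Weinstein Lefschetz picture, now with $\mathcal{D}$ replaced by its $\gamma$-exchanged image, so the same local move applies to each successive mutation and the argument closes by induction. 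This is exactly where arc-admissibility is more flexible than the CAL-skeleton hypothesis: even when a $\gamma$-exchange renders some curve immersed, so that $\mu_D(L)$ no longer carries a CAL-skeleton, arc-admissibility requires only an efficient Morse function, which always survives.

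\emph{Main obstacle.} The delicate point is the local identification in the second paragraph: one must check that Lagrangian disk surgery along the vanishing cycle of $q_i$, with thimble $D$, is Hamiltonian isotopic to the abstractly defined $\mu_D(L)$ while simultaneously producing a genuine coupled Lefschetz handle, so that \cref{lemma:arc_image_lemma} is applicable. Tracking exactness and the reverse disk $\widehat D$ through this move, and ensuring the output is again a standard picture so the induction can be repeated, is the main bookkeeping to get right.
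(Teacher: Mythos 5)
Your overall strategy --- induct over single mutations, pass to the degenerate standard picture, realize $D$ as a thimble over an arc down to $q_i$, and push the arc of $p_L(L)$ over that critical value --- is the same as the paper's. But there are two genuine gaps. First, you invoke compatibility of $(L,\mathcal{D},\Gamma)$ with $(L,f)$ in the sense of \cref{def: compatible_CAL} to place $\gamma=\partial D$ in the $1$-skeleton, but the proposition does not assume compatibility, and after a mutation the $\gamma$-exchanged system may fail to consist of curves in the $1$-skeleton (indeed some curves may become immersed). You acknowledge that $\mu_D(L)$ need not carry a CAL-skeleton, but you do not explain how, at the \emph{next} step of the induction, the boundary of the next disk in $\mathcal{D}$ is realized as an embedded curve in a (singular) fiber of the fixed fibration $p_L$ so that the mutation can again be performed as an arc move. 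This is exactly the step the paper spends most of its effort on: it observes that the basis of $H_1(L_1)$ coming from the $1$-skeleton of $f$ is embedded in $L_1\cap p_L^{-1}(x_1)$, that $\gamma_1$ is embedded in $L_1$ because $D$ is an $\mathbb{L}$-compressing disk, and then performs handleslides of the $1$-handles (and a corresponding Weinstein Lefschetz homotopy of $p_L$) to make $D\cap p_L^{-1}(x_1)$ embedded in the fiber. Without some version of this argument your induction does not close.

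Second, the identification of ``push the arc of $p_L(L_1)$ over $q_i$'' with the abstractly defined Lagrangian disk surgery $\mu_D(L_1)$ --- including the production of the reverse disk $\widehat{D}$ and the preservation of exactness --- is precisely what you label the ``main obstacle'' and leave as bookkeeping. The paper does not prove this from scratch either: it verifies that $L_1\cup D$ is a mutation configuration in the sense of Pascaleff--Tonkonog and then cites their local model (their Figure 2) for the behavior of a Lagrangian mutation under a Lefschetz map. Your alternative route, which runs the new piece through \cref{lemma:arc_image_lemma} by exhibiting it as a coupled Lefschetz handle, would require independently verifying that disk surgery produces such a handle tangent to the Liouville structure; as written this is asserted rather than shown. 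Citing the mutation-configuration criterion and the Pascaleff--Tonkonog local picture is the cleaner way to discharge this step.
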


\begin{proof}   
    Let $L' = \mu_{D}(L_1)$, where $D \in \mathcal{D}$ and $L_1$ is obtained from mutations on $L$ along disks in $\mathcal{D}_{L_1} \subseteq \mathcal{D}$, and $\partial D = \gamma_1 \subset L_1$. The case $\mathcal{D}_{L_1} = \emptyset$ is the base case, where $L_1 = L$. We proceed inductively, by showing that each mutated Lagrangian $L'$ is arc-admissible for $p_L$. In fact, we show that $p_L(L')$ is obtained by pushing a part of the arc $p_L(L)$ to the right of all of $\{x_{1, i}\}$, over the critical values corresponding to the disks in $\mathcal{D}_{L_1}$ along arcs.
 
    By the induction hypothesis, $p_L(L_1)$ is an arc obtained by pushing the rightmost segment of $p_L(L)$ past the critical values of $p_L|_{W_L}$ along arcs.
    
    Let $\alpha \subset \D^2$ be an arc between the critical value $q_1$ of $W_L$ and an arbitrary regular value $x \in p_L(L_1) \subset \D^2$, chosen so that the interior of $\alpha$ avoids $p_L(L_1)$. Associated to $\alpha$ is a vanishing cycle $c_1 \subset p_L^{-1}(x)$, consisting of the points in the regular fiber over $x$ which parallel transport into the critical point corresponding to $q_1$. Let $D'$ denote the Lagrangian vanishing thimble over $\alpha$ with boundary $c_1$.
    
    A priori, the vanishing cycle $c_1 \subset F$ depends on the choice of $\alpha$. Nevertheless, we claim that $D'$ can be extended 
    (up to Hamiltonian isotopy) into the $\mathbb{L}$-compressing disk $D$ for $L_1$ with boundary $\gamma_1$. The corresponding coupled Weinstein Lefschetz pictures are described in \cref{fig: mutation}. Further, we claim that after a homotopy of $p_L$, this embedded curve $\gamma_1$ can be seen on a (singular) fiber of the Lefschetz fibration.
    
    By a homotopy of the coupled Weinstein Lefschetz handlebody, we can modify it to the degenerate standard Weinstein Lefschetz picture. This means $p_L^{-1}(x_1) \cap L = p_L^{-1}(x_1)\cap L_1$ is obtained by:
    \begin{itemize}
    \item starting with the boundary of the $0$-handle of $L$,
    \item attaching the stable manifolds of each index $1$ point, and
    \item contracting the boundary of the $0$-handle along these stable manifolds.
    \end{itemize}
    In particular, given any element in the basis of $H_1(L)$ corresponding to the 1-skeleton of $L$ given by $f$, there is a representative embedded in $p_L^{-1}(x_1) \cap L$. Now, note that the Morse function $f$ on $L$ gives a Morse function on $L_1$, which is still given by $p_L|_{L_1}$. So, we can talk about the compatibility of the CAL-skeleton $(L_1, \mathcal{D}, \mu(\Gamma))$ with $p_L$. If the CAL-skeleton is compatible, then $c_1$ can be isotoped to be embedded in $L_1 \cap p_L^{-1}(x_1)$. If not, note that $\gamma_1$ is embedded in $L_1$, since $D$ is an $\mathbb{L}$-compressing disk for $L_1$. Given any basis of $H_1(L_1)$, an isotopic copy of an embedded curve on $L_1$ can be described as a linear combination of curves in that basis. All the curves in the 1-skeleton of $L$ corresponding to $f$, forming a basis for $H_1(L) = H_1(L_1)$, are embedded in $L_1 \cap p_L^{-1}(x_1)$. By handleslides of the 1-handles of $L$, which correspond to the 1-handles of $L_1$, we can ensure that $D' \cap p_L^{-1}(x_1)$ can be isotoped to be embedded in $L_1 \cap p_L^{-1}(x_1)$. In other words, there exists a homotopy of the handle structure on $L_1$, and a corresponding Weinstein Lefschetz homotopy of $p_L$, so that we obtain $D$ as arc-admissible, i.e. its boundary $\gamma_1 \subset L_1$ is also an embedded curve in a (singular) fiber of the Lefschetz fibration, as shown in the second image of \cref{fig: mutation}. On the nodal fiber $p_L^{-1}(x_1)$, $\partial D$ is a simple closed curve that is isotopic to $c_1$. 
    
    Now, we give the Weinstein Lefschetz description of mutation associated of $L_1$ along $D$. Push the arc $p_L(L_1)$ along $\alpha$ so that it intersects $q_1$ at the critical value $x_1$, now denoted $x_1'$. The disk $D$ contracts to a point, as shown in the third image of \cref{fig: mutation}. We obtain a family of exact Lagrangians $\{L^t\}_{t \in [0,1]}$ where $L^0 = L_1$, $L^t$ is Hamiltonian isotopic to $L_1$ for $t \in [0,1)$, and $L^1$ is an immersed Lagrangian obtained by contracting $\gamma_1 \subset L_1$ to a point. If we push the Lagrangian $L_1$ further along the arc past $p_1$, its lift is an embedded Lagrangian which is $\mu_{D}(L_1) = L'$, the Lagrangian mutation of $L_1$ along $D$, which is smoothly isotopic to $L_1$. The curve $\gamma_1 \subset \mu_{D}(L_1)$ bounds a Lagrangian disk $\widehat{D}$. This follows because,  by the argument above, $L_1 \cup D$ forms a mutation configuration in the sense of \cite[Section 4.5]{pascaleff-tonkonog} or \cite[Section 2]{yau_surgery}. Pushing the arc $p_L(L_1)$ past $q_1$ corresponds exactly to \cite[Figure 2]{pascaleff-tonkonog}, which characterizes the local picture of a Lagrangian mutation under a Lefschetz map. By a coupled Weinstein Lefschetz homotopy, we can isolate the critical values $\{x_{1,i}\}$ along $p(\mu_{D_1}(L))$; this corresponds to the last image of \cref{fig: mutation}, which depicts an ambient Weinstein structure identical to the initial one.

    For this whole procedure, we go from the first image of \cref{fig: mutation} to the last image while keeping the Weinstein Lefschetz structure fixed, only performing exact Lagrangian isotopies and Lagrangian mutations. This disrupts the coupled structure in the intermediate steps, but the above argument that respects the coupled structure shows that the mutated Lagrangian is in fact arc-admissible for the same Weinstein Lefschetz structure, and the arc is described as in the statement of the lemma. This completes the proof.
\end{proof}

Given a Weinstein pair $(B^4,L)$ as above, we can encode the data of the CAL-skeleton $(L, \mathcal{D}, \Gamma)$ schematically in an admissible Lefschetz fibration $p_L$ as, for example, in the first image of \cref{fig: trefoil_fillings}, described later in \cref{example:trefoil} for fillings of the trefoil: an arc representing $L$ and arcs representing the disks $D_i$ joining points on $p(L)$ with critical points of $W_L$. As the proof shows, the latter arcs do not lift directly to $\mathbb{L}$-compressing disks, but there is no loss of generality in associating disks to the arcs.

\begin{figure}[ht]
	\begin{overpic}[scale=0.8]{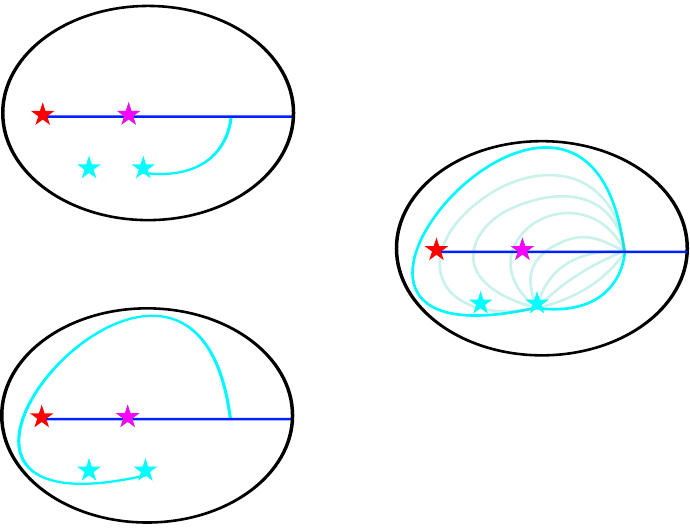}
\put(4,62){\small{$x_0$}}
\put(17,62){\small{$x_{1}$}}
\put(44,58){\small\textcolor{blue}{$L$}}
\put(33,54){\small\textcolor{cyan}{$D_{\alpha}$}}

\put(34,19){\small\textcolor{cyan}{$D_{\alpha'}$}}

\put(70,47){\small\textcolor{cyan}{$\{D_{\alpha_t}\}$}}

    \end{overpic}
    \caption{The situation described in \cref{lem: arc-disk}, with different arcs giving isotopic $\mathbb{L}$-compressing disks. The isotopy $\{D_{\alpha_t}\}_{t\in[0,1]}$ between $D_{\alpha_0}:=D_{\alpha}$ and $D_{\alpha_1}:=D_{\alpha'}$ is in the rightmost figure. By assumption, the curve $c_{\alpha} \subset F$ is disjoint from all the vanishing cycles corresponding to the other critical values.}
    \label{fig: arc_isotopy}
    \end{figure}

We now give a sufficient condition for two arcs with end-points at $x_1$ and a critical value of $p_L$ restricted to $W_L$, to lift to Hamiltonian isotopic (relative boundary) $\mathbb{L}$-compressing disks for an arc-admissible Lagrangian for $p_L$.

\begin{lemma}\label{lem: arc-disk}
    Fix a degenerate standard Weinstein Lefschetz picture as in \cref{def: standard_WLp}. Let $L_1$ be an arc-admissible Lagrangian for $p_L$ obtained by mutations on $L$. If the vanishing cycle corresponding to $q_j$ is disjoint on a regular fiber with the vanishing cycles for $p_L$ corresponding to $x_0$ and $q_i$ for $\{i \neq j\}$, then (after a homotopy of $p_L$) any arc with end-points at $x_1$ and $q_j$ lifts to a well-defined $\mathbb{L}$-compressing disk for $L_1$.
\end{lemma}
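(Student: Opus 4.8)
The plan is to prove the two assertions packaged in the word \emph{well-defined}: first, that for a fixed arc $\alpha$ from $x_1$ to $q_j$ the Lefschetz thimble over $\alpha$ produces an $\mathbb{L}$-compressing disk for $L_1$, and second, that the resulting disk is independent of the choice of $\alpha$ up to Hamiltonian isotopy rel boundary. The existence half I would handle exactly as in the proof of \cref{prop: once_mutated_filling}: parallel transport of the vanishing cycle of $q_j$ along $\alpha$ sweeps out a Lagrangian disk $D_\alpha \subset B^4$ whose boundary is the associated vanishing cycle $c_\alpha$ in a regular fiber $F$ adjacent to $x_1$. Since $L_1$ is arc-admissible for $p_L$ with image passing through $x_1$, after the permitted homotopy of $p_L$ the curve $c_\alpha$ can be isotoped to lie in $L_1 \cap F$, so $D_\alpha$ completes to a disk meeting $L_1$ cleanly along $\partial D_\alpha = c_\alpha$. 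Thus the interesting content is the independence of $\alpha$, and this is precisely where the disjointness hypothesis enters.

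The heart of the argument is to control how $D_\alpha$ depends on $\alpha$. I would first observe that any two arcs $\alpha, \alpha'$ from $x_1$ to $q_j$ whose interiors avoid the critical values are related by a finite sequence of elementary moves: isotopies through such arcs, under which $D_\alpha$ varies by an ambient Hamiltonian isotopy (the symplectic connection of the fibration being canonical up to Hamiltonian isotopy), and \emph{finger moves} pushing the arc across a single critical value, which here can only be $x_0$ or some $q_i$ with $i \neq j$ (we never push across the endpoint $x_1$). Under a finger move across a critical value with vanishing cycle $V$, the boundary cycle transforms by the monodromy around that value, namely the Dehn twist, so that $c_{\alpha'} \simeq \tau_V(c_\alpha)$ (or its inverse), and $D_{\alpha'}$ differs from $D_\alpha$ correspondingly.

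The disjointness hypothesis now does the work. Transporting all the vanishing cycles to a common reference fiber, which is a surface since the fibers of $p_L$ are $2$-dimensional, the hypothesis says $c_\alpha$ is disjoint from $V_0$ and from every $V_i$ with $i \neq j$. As $\tau_V$ is supported in an arbitrarily small neighborhood of the embedded curve $V$, we may isotope $c_\alpha$ off the support of $\tau_V$, whence $\tau_V(c_\alpha)$ is isotopic, and in fact Hamiltonian isotopic, to $c_\alpha$. Feeding this back through the thimble, each finger move alters $D_\alpha$ only by a Hamiltonian isotopy fixing $\partial D_\alpha$ up to isotopy; composing the elementary moves shows that $D_\alpha$ and $D_{\alpha'}$ are Hamiltonian isotopic rel boundary. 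This gives the well-defined $\mathbb{L}$-compressing disk of the statement and realizes the schematic isotopy $\{D_{\alpha_t}\}$ of \cref{fig: arc_isotopy}.

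The step I expect to be the main obstacle is the passage between the two levels of the argument: the surface-level statement that disjointness implies $\tau_V(c_\alpha) \simeq c_\alpha$ is elementary, but upgrading it to the total-space statement that $D_\alpha$ and $D_{\alpha'}$ are Hamiltonian isotopic \emph{rel boundary as $\mathbb{L}$-compressing disks} requires care, since one must simultaneously keep the boundary on $L_1$ and control the behavior at the nodal fiber over the endpoint $x_1$. Both the identification of the finger-move monodromy with the corresponding Dehn twist and the resulting thimble isotopy are standard in the Picard--Lefschetz and matching-cycle formalism, so the work is in organizing these compatibly with $L_1$ throughout, rather than in any new geometric input.
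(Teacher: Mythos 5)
Your proposal is correct and takes essentially the same approach as the paper, whose entire proof is a single sentence deferring to \cref{fig: arc_isotopy}: the figure depicts exactly the isotopy of arcs you describe, with the caption invoking the disjointness of $c_\alpha$ from the other vanishing cycles so that dragging the arc across $x_0$ or $q_i$ ($i\neq j$) does not change the thimble. Your write-up, decomposing the comparison of arcs into isotopies and finger moves and identifying the finger-move monodromy with a Dehn twist that acts trivially on $c_\alpha$, is a faithful (and more detailed) rendering of what the paper leaves implicit.
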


\begin{proof}
    The Hamiltonian isotopy relative boundary is described in \cref{fig: arc_isotopy}.
\end{proof}

It follows from \cref{lem: arc-disk} that in some cases there can be infinitely many homotopy classes of arcs representing the once mutated filling $\mu_{D_1}(L)$, even after fixing $p(L)$. A concrete example is given below in \cref{example:trefoil}, describing the fillings of the trefoil $\Lambda = \Lambda(2,3)$.

\subsection{CAL-skeleta associated to fillings}

In this subsection we discuss how to find or construct fillings that have associated CAL-skeleta, and show that \cref{thm: mutation} applies to all conjectured fillings of Legendrian links in $(S^3, \xi_{\mathrm{st}})$. Recall from the introduction that if $\beta$ is a positive braid word, we let $\Lambda_{\beta}$ denote its rainbow closure as in \cref{fig:rainbow}.

\begin{example}\label{eg: decomposable}
    A decomposable filling of $\Lambda_\beta$ can be built by Legendrian isotopies, births, and pinch moves in the front projection as in \cref{fig: decomposable}. Different sequences of pinches will typically give distinct (up to Hamiltonian isotopy) fillings. 
\end{example}

\begin{figure}[ht]
	\begin{overpic}[scale=.7]{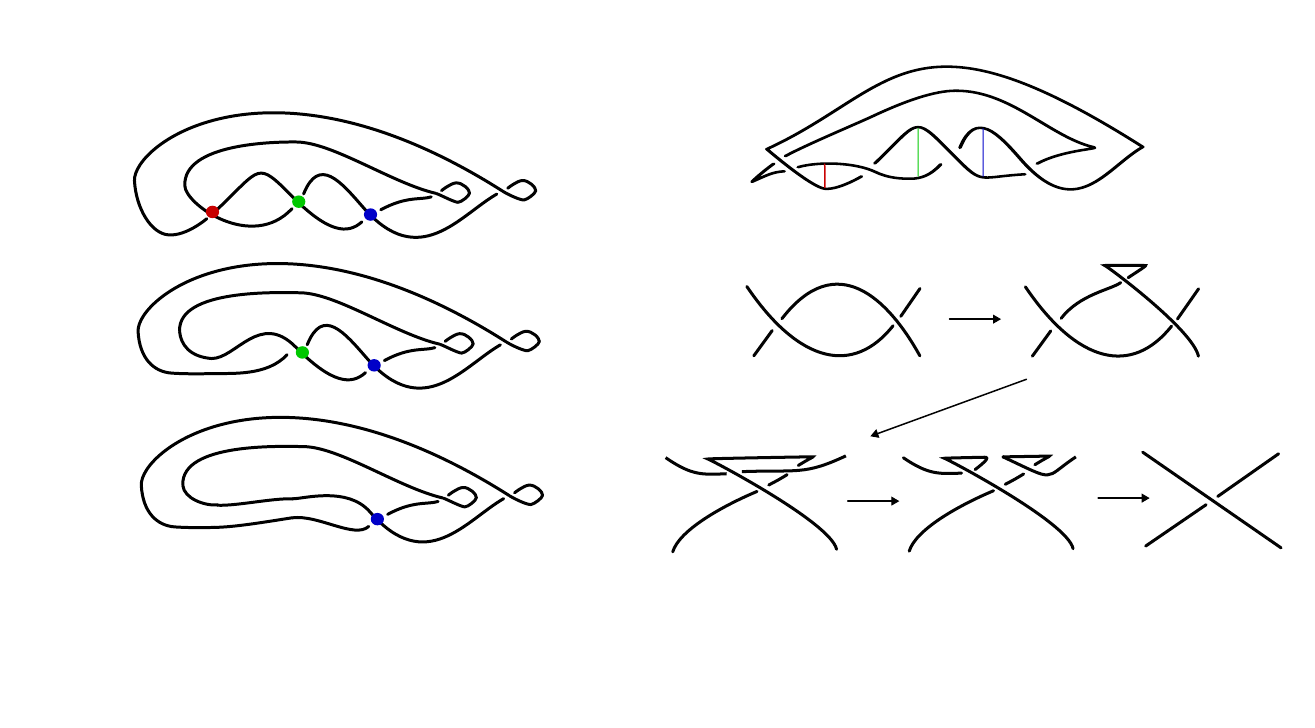} 
    \end{overpic}
    \caption{The initial decomposable filling $L_{\mathrm{init}}$ of the max-tb trefoil $\Lambda(2,3)$, described as cobordisms of the Lagrangian projection. The same knot is drawn in the front projection on the right, so the Reeb chords are visible. The saddle move in the Lagrangian projection is described as a sequence of Legendrian isotopies and a single pinch in the front projection on the right, following \cite{hughes_a-type}.}
    \label{fig: decomposable}
    \end{figure}

The initial filling $L_{\mathrm{init}}$ used by \cite{casals_gao24} is Hamiltonian isotopic to a decomposable filling built using a single birth, Legendrian isotopies, and pinch moves in the front projection. The algorithm in \cite{ConwayEtnyreTosun2021Disks} that establishes regularity of $L_{\mathrm{init}}$ actually shows that $L_{\mathrm{init}}$ is part of a CAL-skeleton for the pair $(B^4, \Lambda_\beta)$. We make this precise in the following, noting that (a variation of) the result is known to experts, e.g. \cite[Remark 4.4]{casals_gao24}. This is a restatement of \cref{prop:decomposable-skel}.

\begin{proposition}\label{prop: decomposable_to_skeleton}
    Given a connected decomposable filling $L$ of a Legendrian link $\Lambda_\beta \subset (S^3, \xi_{\mathrm{st}})$, the standard Weinstein structure $\mathcal{W}$ on the Weinstein pair $(B^4,\Lambda_\beta)$ can be homotoped so that it has an arboreal skeleton consisting of $L$ and disks $D_i$ attached to $L$ cleanly along curves $\gamma_i$, such that the collection $\{\gamma_i\}$ forms a basis of $H_1(L)$.
\end{proposition}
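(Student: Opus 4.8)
The goal is to show that a connected decomposable filling $L$ of $\Lambda_\beta$ is part of a CAL-skeleton: there exist $\mathbb{L}$-compressing disks $D_i$ attached cleanly to $L$ along curves $\gamma_i$ that form a basis of $H_1(L)$, with $L \cup \bigcup_i D_i$ arboreal and tangent to the Liouville flow.

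Let me think about what structure a decomposable filling gives us.

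A decomposable filling is built from the empty Legendrian by a sequence of elementary Lagrangian cobordisms: births (minimum cobordisms, each a disk) and saddle moves (pinch moves / index-$1$ handle attachments in the front). Each of these corresponds to an elementary Lagrangian handle — a $0$-handle or $1$-handle — attached in the symplectization, so $L$ carries a natural handle decomposition with one $0$-handle per birth and one $1$-handle per saddle. Since $L$ is connected and has nonempty boundary, there are no $2$-handles, and after the handles are attached the Euler characteristic and connectivity force $H_1(L)$ to be free of rank $b = (\#\text{saddles}) - (\#\text{births}) + 1$. The heart of the matter is to produce, for each generator of $H_1(L)$, a Lagrangian disk in $B^4$ meeting $L$ cleanly along an embedded representative of that generator, and to arrange these disks so the union is arboreal and Liouville-tangent.

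The approach I would take follows the strategy of Conway–Etnyre–Tosun (cited as \cite{ConwayEtnyreTosun2021Disks}), which establishes regularity of decomposable Lagrangians by building the ambient Weinstein structure handle-by-handle in lockstep with the Lagrangian handles — precisely the \emph{coupled handle} picture reviewed in \cref{subsec:coupled}. Concretely, I would run their argument but keep track of the cocores of the ambient critical handles as candidate $\mathbb{L}$-compressing disks. Here is the plan in order. First, realize the decomposable filling as the Lagrangian built from a coupled Weinstein handle decomposition of $(B^4, L)$: each birth corresponds to a coupled $(0,0)$-handle (a Lagrangian disk filling a standard unknot, as in (2) of \cref{example:coupled_handles}), and each saddle corresponds to an exact embedded Lagrangian $1$-handle, i.e. a coupled Weinstein $1$-handle in canceling position with an auxiliary ambient handle, exactly the ``exact embedded Lagrangian $k$-handle'' construction noted after \cref{example:coupled_handles}. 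This presents the standard Weinstein structure $\mathcal{W}$, up to homotopy, as $\D^*L$ with $L$ as the $0$-section (\cref{prop:regularity_characterization}(3)), with the $2$-handles of the complementary cobordism $W_L$ carrying cores that project to the disks we want.

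Next, I would identify the compressing curves. A spanning tree of the handle decomposition (one $0$-handle, and enough $1$-handles to connect) contributes no homology; the remaining $b$ ``non-tree'' $1$-handles each close up a loop in $L$, and one chooses $\gamma_i$ to be the embedded core circle of the $i$-th such loop. The disk $D_i$ is the cocore of the ambient critical Weinstein $2$-handle that cancels the auxiliary handle paired with that Lagrangian $1$-handle; by construction $D_i$ is a Lagrangian disk in the interior of $B^4$ meeting $L$ transversally exactly along $\gamma_i$, so $\partial D_i = \gamma_i$ is clean. That the $\gamma_i$ form a basis of $H_1(L)$ is then a standard cellular-homology computation on the handle chain complex: the non-tree $1$-handles are precisely dual to a basis of $H_1$.

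The step I expect to be the main obstacle is the \emph{arboreality} together with simultaneous Liouville-tangency of $L \cup \bigcup_i D_i$. Getting each individual $D_i$ clean and Lagrangian is local and routine, but arranging the whole singular union to be an arboreal Lagrangian skeleton (in the sense of \cite{nadler2017arboreal}) requires controlling how the disks meet $L$ and each other — the local models at the attaching loci must be the standard arboreal (Aₙ-type plumbing) singularities, and all of $L$, $\{D_i\}$ must be simultaneously tangent to one Liouville field. The plan here is to invoke the coupled-handle structure directly: because each coupled $(2,2)$-handle is Liouville-tangent to its core, and the $\gamma_i$ can be taken disjointly embedded in $L$ (they lie in distinct non-tree $1$-handles, so can be made to meet $L$ along disjoint circles in distinct handles), the cocore disks meet $L$ in the standard transverse arboreal model and are automatically tangent to the Liouville flow built in \cref{lemma:cot_bund_str}. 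I would phrase this as a direct adaptation of the argument of \cite{ConwayEtnyreTosun2021Disks}, upgrading their regularity conclusion to the arboreal-skeleton conclusion by observing that their explicit Weinstein model is already the plumbing of $\D^*L$ with the disk cotangent neighborhoods of the $D_i$, which is arboreal by construction. The only genuine care needed is to keep the $\gamma_i$ disjoint and embedded so that no higher-valence arboreal singularities (which would still be allowed, but complicate the model) are forced — and disjointness is achievable because distinct non-tree $1$-handles can be cyclically separated in the decomposable front.
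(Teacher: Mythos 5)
Your overall strategy---running the Conway--Etnyre--Tosun coupled-handle construction for the decomposable filling and extracting the $\mathbb{L}$-compressing disks from the ambient $2$-handles that cancel the auxiliary $1$-handles, with arboreality and Liouville-tangency read off from the explicit model---is the same route the paper takes. But one step is wrong as written: you take $D_i$ to be the \emph{cocore} of the canceling Weinstein $2$-handle. The cocore's boundary is the belt sphere, which lies in the contact boundary of the partial handlebody (and, pushed forward by the Liouville flow, in $\partial B^4$); it does not lie on $L$ and the cocore is not a disk in the interior of $B^4$ with boundary on $L$. The correct disk is the \emph{core} of the canceling $2$-handle, i.e.\ the stable manifold of its critical point: its boundary is the attaching sphere, which runs once over the ambient $1$-handle and, after a $0$-Weinstein homotopy extending the disk into the $1$-handle, lies on the Lagrangian $1$-handle of $L$ parallel to its core. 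Your own list of required properties ($D_i$ interior to $B^4$, meeting $L$ cleanly along $\partial D_i=\gamma_i$) matches the core, not the cocore, so this is presumably a slip, but as stated the construction produces the wrong object.

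The second issue is your spanning-tree bookkeeping for multiple births. Every coupled Weinstein $1$-handle in the decomposition is accompanied by a canceling ambient $2$-handle, and the core of \emph{every} such $2$-handle is forced into the skeleton of the resulting Weinstein structure; you cannot simply discard the disks attached over the ``tree'' $1$-handles, and for a tree edge (whose feet lie in distinct $0$-handles) the attaching curve of the canceling $2$-handle is not parallel to the $1$-handle core and its class need not belong to your chosen basis of $H_1(L)$. The clean fix, which is what the paper does implicitly, is to first cancel coupled $0$--$1$ pairs so that the connected filling $L$ is built from a \emph{single} coupled $0$-handle; then every Lagrangian $1$-handle contributes a basis element of $H_1(L)$, every canceling $2$-handle core is one of the $D_i$, and the skeleton is exactly $L\cup\bigcup_i D_i$ as required. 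With these two corrections your argument coincides with the paper's proof.
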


\begin{proof}
    The proof follows from the Weinstein structure associated to a decomposable cobordism described by \cite[Section 3.2]{ConwayEtnyreTosun2021Disks}. The coupled Weinstein structure on $(B^4, L)$ (\cref{def:coupled_handle_decomposition}) can be built using a single coupled 0-handle $(h^4_0, h^2_0)$, and pairs of coupled Weinstein $1$-handles $(h^4_1,h^2_1)$ with canceling Weinstein $2$-handles $(h^4_2,\emptyset)$ (c.f. \cite[Figure 7]{ConwayEtnyreTosun2021Disks}). The attaching curves of the $2$-handles are the boundaries of the core disks, which are parallel to the core of the $1$-handles. By a $0$-Weinstein homotopy, the Lagrangian disks can be extended into the $1$-handles so that their boundaries lie on the Lagrangian handles inside the coupled handles. This produces the arboreal skeleton given by the union of $L$ and the disks $D_i$. Since a basis of $H_1(L)$ is given by the cores corresponding to each of the $1$-handles, and these are the boundaries of the disks $D_i$, the lemma follows. 
\end{proof}

\begin{remark}
    The construction of the associated CAL-skeleton above works for any decomposable filling of a Legendrian link in $(S^3,\xi_{\mathrm{st}})$.  However, if $\Lambda$ is not the rainbow closure of a positive braid, we may not get a basis of $H_1(L)$ from the $\mathbb{L}$-compressing system. For example, the decomposable fillings of the knot $m(9_{46})$ are disks, hence have vanishing first homology.
\end{remark}

\begin{figure}[ht]
	\begin{overpic}[scale=.5]{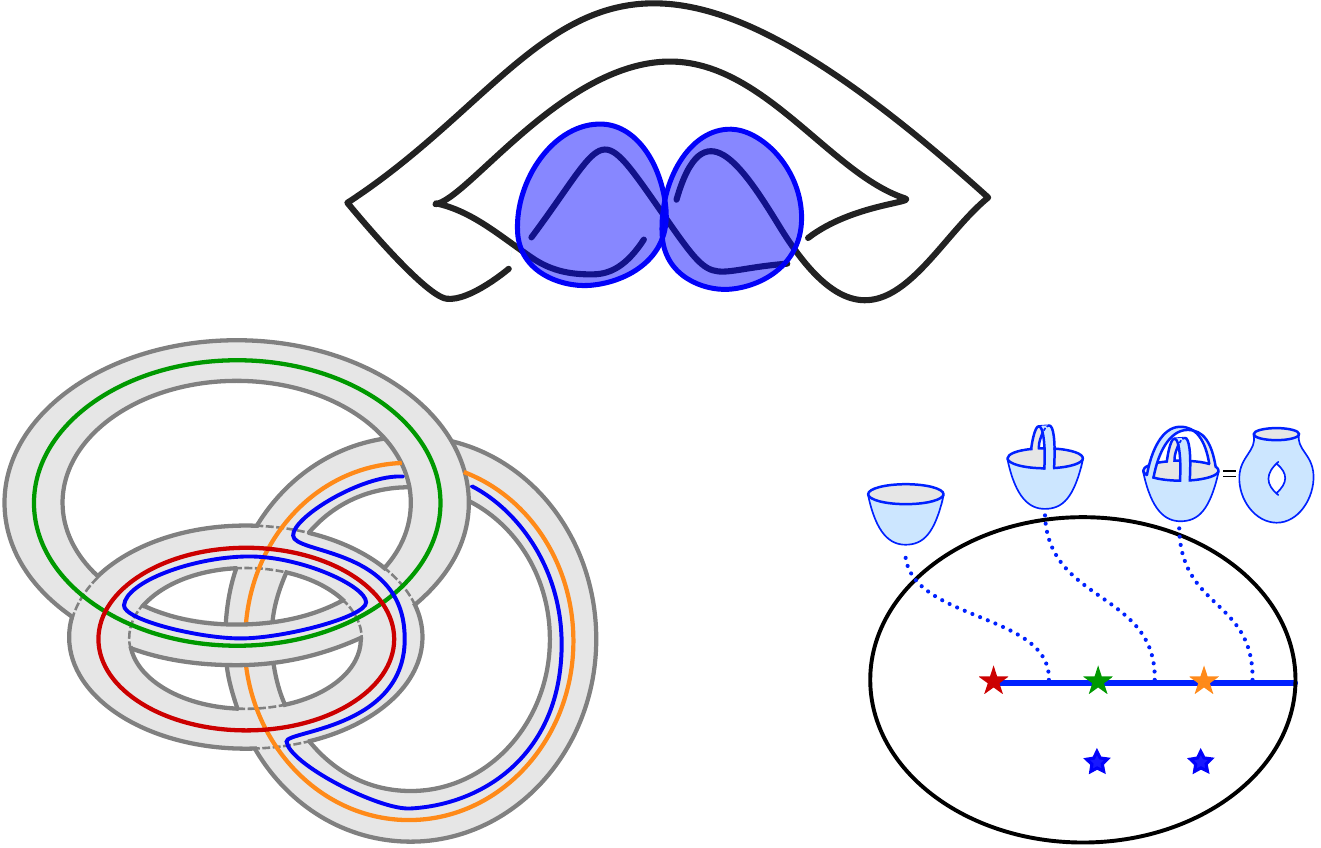} 
    \end{overpic}
    \caption{Counter-clockwise from top: The $\mathbb{L}$-compressing system on the initial filling $L
    _{\mathrm{init}}$ for the max-tb trefoil, the page of an admissible Lefschetz fibration for the pair $(B^4, L_{\mathrm{init}})$, and the standard Weinstein Lefschetz picture for the same.}
    \label{fig: trefoil_LF}
    \end{figure}

 \begin{figure}[ht]
	\begin{overpic}[scale=.7]{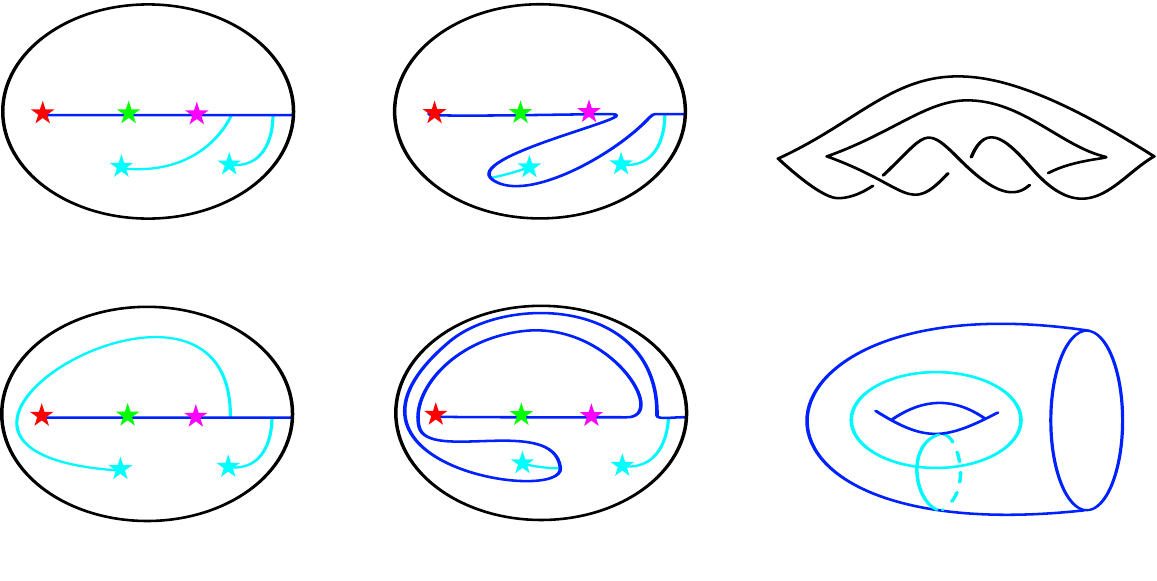}
    \put(8,28){\small\textcolor{blue}{$L_{\text{init}}$}}
    \put(40,28){\small\textcolor{blue}{$\mu_{D_1}(L_{\text{init}})$}}
    \put(8,3){\small\textcolor{blue}{$L_{\text{init}}$}}
    \put(40,3){\small\textcolor{blue}{$\mu_{D_1}(L_{\text{init}})$}}

\put(2,42){\small{$x_0$}}
\put(8,42){\small{$x_{1,1}$}}
\put(15,42){\small{$x_{1,2}$}}
\put(12,37){\small\textcolor{cyan}{$D_1$}}
\put(19,37){\small\textcolor{cyan}{$D_2$}}
\put(7,34){\small{$q_1$}}
\put(16,34){\small{$q_2$}}

\put(37,42){\small{$x_0$}}
\put(43,42){\small{$x_{1,1}$}}
\put(49,42){\small{$x_{1,2}$}}
\put(39,37){\small\textcolor{cyan}{$D_1'$}}
\put(50.5,34){\small\textcolor{cyan}{$D_2$}}

\put(82,30){\small{$T_{2,3}$}}

\put(3,15.5){\small{$x_0$}}
\put(8,15.5){\small{$x_{1,1}$}}
\put(15,15.5){\small{$x_{1,2}$}}
\put(12,19){\small\textcolor{cyan}{$D_1$}}
\put(19,11){\small\textcolor{cyan}{$D_2$}}

\put(37,15.5){\small{$x_0$}}
\put(43,15.5){\small{$x_{1,1}$}}
\put(49,15.5){\small{$x_{1,2}$}}
\put(40.5,10){\small\textcolor{cyan}{$D_1'$}}
\put(52.5,11){\small\textcolor{cyan}{$D_2$}}

\put(77,8){\small\textcolor{cyan}{$\gamma_1$}}
\put(79,19){\small\textcolor{cyan}{$\gamma_2$}}
    \end{overpic}
    \caption{On the top left, an admissible Lefschetz fibration for $(B^4, L_{\mathrm{init}})$, showing the arboreal skeleton given by $L_{\mathrm{init}}\cup D_1 \cup D_2$. The boundaries of the $\mathbb{L}$-compressing disks on $L_{\mathrm{init}}$ are described in the bottom right. The leftmost picture on the bottom is another picture of the same skeleton, where a different arc is used to represent $D_1$. The two middle pictures are two different representations of the filling given by mutation along $D_1$.}
    \label{fig: trefoil_fillings}
    \end{figure}

\begin{example}\label{example:trefoil}
    Consider the initial filling $L_{\mathrm{init}}$ of the trefoil from \cref{fig: decomposable}.
    The associated $\mathbb{L}$-compressing system and the standard Weinstein Lefschetz picture
    of the pair $(B^4, L_{\mathrm{init}})$ are described in \cref{fig: trefoil_LF}. Different sequences of mutations on $L_{\mathrm{init}}$ yield $5$ different fillings of $\Lambda(2,3)$ \cite{STWZ, TreumannZaslow,ekholm2012exactcobordisms, hughes_a-type}, all of which are described in \cref{fig: same_arc_different}. From \cref{fig: arc_isotopy} and \cref{fig: trefoil_fillings}, we see that in the Lefschetz fibration corresponding to the standard Weinstein Lefschetz picture of $L_{\mathrm{init}}$, there are infinitely many distinct homotopy classes of arcs that are the image of fillings Hamiltonian isotopic to $\mu_{D_1}(L_{\mathrm{init}})$. On the other hand, in \cref{fig: same_arc_different} we observe that $\mu_{D_2}\mu_{D_1}(L_{\mathrm{init}})$ and $\mu_{D_1}\mu_{D_2}(L_{\mathrm{init}})$ have representatives up to Hamiltonian isotopy with the same image in the base of the Lefschetz fibration. However, if we include the full data of the curves on the once-punctured torus specifying the associated $\mathbb{L}$-compressing system, we see that the skeleta for the two rightmost pictures of \cref{fig: same_arc_different} are distinct. Let $(L_{\mathrm{init}}, (D_1,D_2), ((0,1),(1,0)))$ denote the initial CAL-skeleton as in the bottom right of \cref{fig: trefoil_fillings}. Then the CAL-skeleton associated to $\mu_{D_1D_2}(L_{\mathrm{init}})$ is 
    \[
    (\mu_{D_1D_2}(L_{\mathrm{init}}),\,\,  (\widehat{D_1}, \widehat{D_2}),\,\, ((1,1),(2,1))),
    \]
    and the CAL-skeleton associated to $\mu_{D_2D_1}(L_{\mathrm{init}})$ is 
    \[
    (\mu_{D_2D_1}(L_{\mathrm{init}}),\,\, (\widehat{D_1}, \widehat{D_2}),\,\,((1,1),(0,1))).
    \]
    These two fillings were distinguished using augmentations in \cite{ekholm2012exactcobordisms}.
\end{example}

\begin{figure}[ht]
	\begin{overpic}[scale=.8]{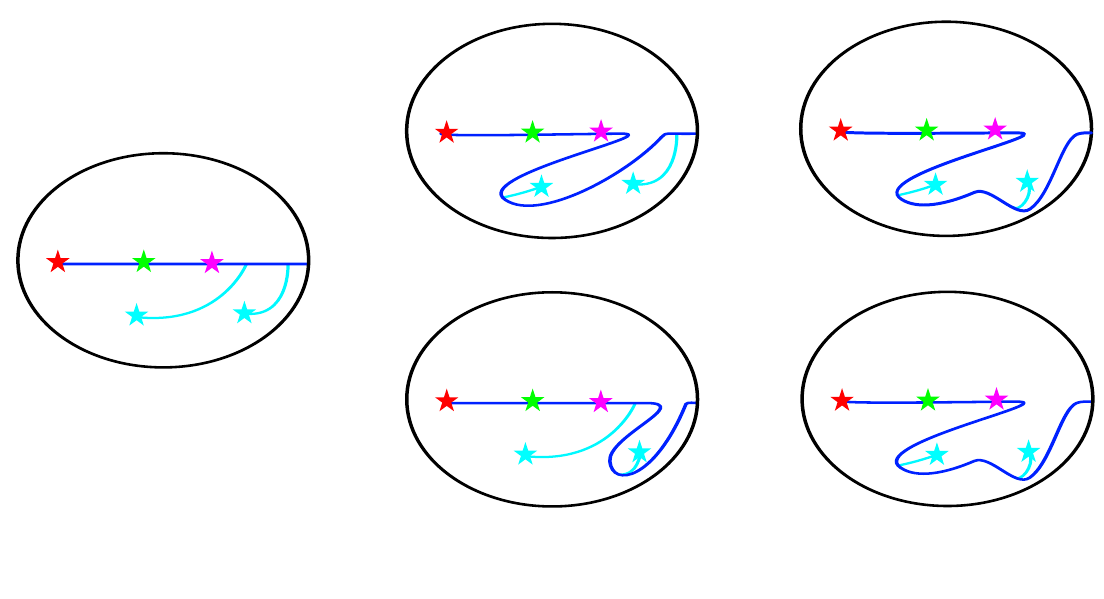} 
    \put(10,18){\small\textcolor{blue}{$L_{\text{init}}(L)$}}
    \put(44,29.5){\small\textcolor{blue}{$\mu_{D_1}(L_{\text{init}})$}}
    \put(79,29.5){\small\textcolor{blue}{$\mu_{D_2}\mu_{D_1}(L_{\text{init}})$}}
    \put(44,5){\small\textcolor{blue}{$\mu_{D_2}(L_{\text{init}})$}}
    \put(79,5){\small\textcolor{blue}{$\mu_{D_1}\mu_{D_2}(L_{\text{init}})$}}

\put(3,32){\small{$x_0$}}
\put(9,32){\small{$x_{1,1}$}}
\put(16,32){\small{$x_{1,2}$}}
\put(16,27){\small\textcolor{cyan}{$D_1$}}
\put(21.5,27){\small\textcolor{cyan}{$D_2$}}
\put(9,24){\small{$p_1$}}
\put(19,24){\small{$p_2$}}

\put(39,44){\small{$x_0$}}
\put(46,44){\small{$x_{1,1}$}}
\put(52,44){\small{$x_{1,2}$}}
\put(42,35){\small\textcolor{cyan}{$D_1'$}}
\put(53,35){\small\textcolor{cyan}{$D_2$}}

\put(75,44){\small{$x_0$}}
\put(82,44){\small{$x_{1,1}$}}
\put(88,44){\small{$x_{1,2}$}}
\put(78,35){\small\textcolor{cyan}{$D_1'$}}
\put(89,37){\small\textcolor{cyan}{$D_2'$}}

\put(39,20){\small{$x_0$}}
\put(46,20){\small{$x_{1,1}$}}
\put(52,20){\small{$x_{1,2}$}}
\put(51,14){\small\textcolor{cyan}{$D_1$}}
\put(52,10){\small\textcolor{cyan}{$D_2'$}}

\put(75,20){\small{$x_0$}}
\put(82,20){\small{$x_{1,1}$}}
\put(88,20){\small{$x_{1,2}$}}
\put(78,11){\small\textcolor{cyan}{$D_1'$}}
\put(89,13){\small\textcolor{cyan}{$D_2'$}}
    \end{overpic}
    \caption{On the top left, an admissible Lefschetz fibration for $(B^4, L_{\mathrm{init}})$, showing the arboreal skeleton given by $L_{\mathrm{init}}\cup D_1 \cup D_2$. The rightmost picture in each row are apparently the same, however the corresponding fillings of $\Lambda(2,3)$ are a priori different as they come from different sequences of mutations and can be distinguished using invariants.}
    \label{fig: same_arc_different}
\end{figure}

For $\Lambda = \Lambda_{\beta}$, Casals and Gao \cite{casals_gao24} construct an initial filling $L_{\mathrm{init}}$ as a conjugate surface, such that its associated $\mathbb{L}$-compressible system has an intersection quiver $Q_\beta$ (Definition 3.4 loc. cit.). They describe how the $\mathbb{L}$-compressible system changes during disk surgeries on $L_{\mathrm{init}}$, and show that after an arbitrary number of mutations, the boundaries of the disks can always be represented by embedded curves. The results of \cite{casals-li22} show that this conjugate filling is Hamiltonian isotopic to a decomposable filling built as in \cref{eg: decomposable}. Hence by \cref{prop: decomposable_to_skeleton}, there is a Weinstein homotopy of the standard $B^4$ such that $L_{\mathrm{init}}$ is part of an arboreal skeleton. Moreover, the $\mathbb{L}$-compressible system used in \cite{casals_gao24} becomes the system that appears in \cref{prop: decomposable_to_skeleton}, bounding the cores of $W_{L_{\mathrm{init}}}$.

In general, following the discussion in \cite[Section 4]{casals_gao24},  the mutated CAL-skeleton $(\mu_{D_i}(L), \{D_i'\}, \{\gamma_i'\})$ associated to $\mu_{D_1}(L)$ is the 3-tuple $ (\mu_{D_1}(L), \{\widehat{D_1}, D_2, \dots\}, \{\gamma_i'\})$ where $\gamma_1' = \gamma_1$, $\gamma_i' = \gamma_i + k\gamma_1$ if $\gamma_1$ and $\gamma_i$ intersect positively $k$ times (see Figures 16 and 17 in \cite{casals_gao24}; in their terminology this change is called a $\gamma_1$-exchange). A priori, it is not clear in general that an $\mathbb{L}$-compressing system after mutation gives an $\mathbb{L}$-compressing system. However, for positive rainbow closures, we can leverage the results of \cite{casals_gao24}.

\begin{proof}[Proof of \cref{cor:CG-fillings}]
    It is shown in \cite{casals_gao24} that for all seeds in the cluster algebra associated to $\Lambda_\beta$ there are exact fillings such that the $\mathbb{L}$-compressing systems are represented by embedded simple closed curves, and they stay as such after any sequence of mutations along $\mathcal{D}$. Let $p_\beta$ be the Lefschetz fibration corresponding to the standard Weinstein Lefschetz picture of $L_{\mathrm{init}}$, coming from \cref{thm:main}. Applying \cref{prop: once_mutated_filling}, it follows that for every seed of the cluster variety $X_\beta$ there is a corresponding filling $L$ of $\Lambda_\beta$ such that $p_{\beta}(L)$ is an arc. 
\end{proof}

\begin{remark}
    Decomposable fillings can be constructed for larger classes of links in $(S^3, \xi_{\mathrm{st}})$, such as $(-1)$-closures of positive braids and twist knots. There exist cluster structures on the sheaf moduli of certain $(-1)$-closures of positive braids, but it is not known yet how to construct a filling for every seed (see \cite[Remark 3.12]{casals_gao24}). 
\end{remark}

\begin{remark}
    We can generalize \cref{example:trefoil} to all $(2,n)$ torus links, i.e. rainbow closures of positive $2$-braids, to obtain a surjective-but-not injective (either way) correspondence between fillings of $\Lambda(2,n)$ and arcs on the plane (seen as the codomain of a standard Weinstein Lefschetz fibration for the initial filling $L_{\mathrm{init}}$) passing through $x_0$ and the $x_{1,i}$'s. In this setting, analogous to \cref{fig: trefoil_LF}, the vanishing cycles obtained from \cref{prop:decomposable-skel} corresponding to the $q_i$ critical values are all disjoint in a regular fiber. Thus, by \cref{prop: once_mutated_filling} and \cref{lem: arc-disk}, for fixed $L_{\mathrm{init}}$ and $p_{L_{\mathrm{init}}}$, all possible arcs on the punctured $D^2$ are images under $p_{L_{\mathrm{init}}}$ of exact fillings of $\Lambda(2,n)$, obtained from $L_{\mathrm{init}}$ via mutation. Further, the Hamiltonian isotopy class of a filling corresponding to an arc $\alpha$ can be read off of a sequence of arcs joining $p_{L_{\mathrm{init}}}(L_{\mathrm{init}})$ to the critical values $q_i$, along which the arc $p_{L_{\mathrm{init}}}(L_{\mathrm{init}})$ can be pushed over the critical values to obtain $\alpha$. There need not be a unique such sequence, which accounts for the non-injectivity from arcs to fillings.
\end{remark}

\bibliography{references}
\bibliographystyle{amsalpha}

\end{document}